\begin{document}

\begin{abstract}
  The standard period-index conjecture for Brauer groups of $p$-adic
  surfaces $S$ predicts that $\ind(\alpha)|\per(\alpha)^3$ for every
  $\alpha\in\Br(\QQ_p(S))$.  Using Gabber's theory of
  {prime-to-$\ell$} alterations and the deformation theory of twisted
  sheaves, we prove that $\ind(\alpha)|\per(\alpha)^4$ for $\alpha$ of
  period prime to $6p$, giving the first uniform period-index bounds
  over such fields.
\end{abstract}

\maketitle

\tableofcontents

\vspace*{-2em}
\section{Introduction}
\numberwithin{theorem}{section}

The purpose of this paper is to prove the following result concerning
the period-index problem for the Brauer group.

\begin{theorem}\label{thm:main}
Let $R$ be an excellent henselian discrete valuation ring with residue
field $k$ of characteristic $p\geq 0$ and with fraction field $K$. Suppose
$k$ is semi-finite or separably closed.  Let $L$ be an extension of
$K$ of transcendence degree $2$, and let $\alpha\in\Br(L)$ be a Brauer
class.  If $\alpha$ has period prime to $p$, then
$$
\ind(\alpha) \mid \per(\alpha)^5.
$$
If $\alpha$ has period prime to $6p$, then
$$
\ind(\alpha) \mid \per(\alpha)^4.
$$
\end{theorem}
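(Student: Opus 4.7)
The plan is to reduce the period-index problem over $L$ to one on the special fiber of a regular model, and then to deform a twisted sheaf back to the generic fiber via the moduli theory of simple twisted sheaves.

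First, I would realize $L$ as the function field of a projective flat $R$-scheme $\mathcal{X} \to \Spec R$ of relative dimension $2$. Applying Gabber's prime-to-$\ell$ alteration theorem with $\ell$ running over primes dividing $\per(\alpha)$, we may replace $\mathcal{X}$ by a regular alteration of generic degree prime to $\per(\alpha)$; since such a prime-to-$n$ base change preserves the $n$-primary part of the index for $n=\per(\alpha)$, this reduces us to the case that $\mathcal{X}$ is regular. Using absolute cohomological purity for the Brauer group of the regular three-dimensional scheme $\mathcal{X}$, together with further alterations to tame ramification along horizontal and vertical divisors if needed, I would extend $\alpha$ to a class $\tilde\alpha \in \Br(\mathcal{X})$ of the same period, realized by a $\mu_n$-gerbe $\mathcal{G} \to \mathcal{X}$.

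Next I would invoke the known period-index bounds on function fields of surfaces over $k$: de Jong's theorem $\ind \mid \per$ when $k$ is separably closed, and Lieblich's bound $\ind \mid \per^2$ when $k$ is semi-finite. These produce a twisted coherent sheaf $\mathcal{F}_0$ on the restriction $\mathcal{G}_k$ whose rank is controlled by $\per(\alpha)$ or $\per(\alpha)^2$. The crucial and most difficult step is then to deform $\mathcal{F}_0$ to a twisted sheaf on $\mathcal{G}$, formally along $\mathcal{X}_k$ and then algebraically via formal GAGA. The infinitesimal obstructions live in $\operatorname{Ext}^2_{\mathcal{G}_k}(\mathcal{F}_0,\mathcal{F}_0)$ and generically do not vanish. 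Following Lieblich's deformation theory, one passes to the locus of simple (or stable) twisted sheaves, where the moduli stack is smooth of the expected dimension; choosing $\mathcal{F}_0$ with appropriate determinant and second Chern class so as to land on a smooth component kills the obstruction, at the cost of a controlled enlargement of the rank.

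Restricting a lift $\mathcal{F}$ to the generic fiber yields an Azumaya algebra of degree $\operatorname{rk}(\mathcal{F})$ representing $\alpha$ (up to a factor prime to $\per(\alpha)$) on $L$, giving the desired index bound. The exponent accounting runs as follows: the special-fiber input contributes $\per(\alpha)$ or $\per(\alpha)^2$ to the rank, and each subsequent adjustment — stabilization, twisting by an auxiliary line bundle, controlling the determinant — typically multiplies by $\per(\alpha)$. When $2$ or $3$ divides $\per(\alpha)$, constructing stable twisted sheaves with prescribed invariants forces an additional such factor, producing the bound $\per^5$; under the prime-to-$6p$ hypothesis Lieblich's sharper moduli argument applies and saves one factor, yielding $\per^4$. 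The principal obstacle throughout is guaranteeing that the chosen twisted sheaf on the special fiber is both sufficiently stable to deform smoothly and has rank sharply controlled by the desired power of $\per(\alpha)$; a secondary technical point is handling the ramification of $\alpha$ along the special fiber, where one may need to alter twice — first to regularize $\mathcal{X}$, then to spread out $\alpha$ — with careful tracking of generic degrees to preserve the prime-to-$\per(\alpha)$ condition.
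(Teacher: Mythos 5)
Your skeleton (regular model via Gabber, twisted sheaf on the special fiber, formal deformation, algebraization) matches the paper's, but there is a genuine gap at the step where you claim to extend $\alpha$ to a class $\tilde\alpha\in\Br(\mathcal{X})$ ``of the same period'' using purity plus ``further alterations to tame ramification.'' Prime-to-$\ell$ alterations cannot remove the ramification of $\alpha$: the residues along divisors of a regular model are an essential obstruction, and purity only applies to classes that are already unramified. The paper's entire exponent accounting comes from this point: one must first pass to an extension $L'=\kappa(X')(\sqrt[\ell]{f_1},\dotsc,\sqrt[\ell]{f_N})$ obtained by adjoining $\ell$-th roots of carefully chosen rational functions (the Saltman--Pirutka construction, organized here by ``$\ell$-Pirutka matrices'' after blowing up so the ramification divisor is a union of $\dim X=3$ regular divisors). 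One can take $N=3$ when $\ell>3$ and $N=4$ for all $\ell$, and only after this degree-$\ell^N$ extension does the class become unramified and extend to a regular quasi-semistable model. The bounds $\per^4$ and $\per^5$ are then $\ell^{N}\cdot\ell$, the last factor being period equals index for the unramified class on the model. Your attribution of the $4$ versus $5$ distinction to difficulties with stable twisted sheaves when $2$ or $3$ divides the period is not the actual mechanism and is not backed by any argument; the distinction is purely the nonexistence of suitable $2$- and $3$-Pirutka $3\times 3$ matrices.

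The deformation step as you describe it would also fail as stated. The paper does not deform via moduli of simple or stable twisted sheaves, and it cannot afford any ``controlled enlargement of the rank'': the sheaf on the reduced special fiber has rank exactly $\ell$ (period equals index on the snc surface, proved by gluing de~Jong/Lieblich-type inputs across components), and the rank must stay $\ell$ to get the final bound. Obstructions in $\Ext^2$ are killed not by choosing stable sheaves with prescribed determinant and $c_2$ --- on a possibly non-reduced snc special fiber over a semi-finite field such a smoothness claim is unsupported --- but by the ``fracking'' trick: replacing $F$ by a subsheaf $G$ with finite-length quotient so that the trace-zero groups $\Ext^2_{\Xscr_{0,\red}}(G,M_i\otimes G)_0$ vanish (via Serre duality), and then performing equideterminantal deformations through the explicit filtration of ideal sheaves interpolating between $X_{0,\red}$ and the formal completion, since the special fiber need not be reduced. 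Only then do Grothendieck existence and Artin approximation apply. Without the ramification-splitting extension and with rank inflation in the deformation step, your argument neither produces an unramified class on a model nor yields the stated exponents.
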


Recall from~\cite{lieblich-tr2} that $k$ is a semi-finite field if it
is perfect and if for every prime $\ell$, the maximal prime-to-$\ell$
extension of $k$ is pseudo-algebraically closed with Galois group
$\ZZ_\ell$. Finite fields and pseudo-finite fields are semi-finite. As
a special case, we obtain the following result.

\begin{corollary}
Let $S$ be a geometrically integral surface over a $p$-adic field
$K$. If ${\alpha\in\Br(K(S))}$ has period relatively prime to $6p$,
then
$$
\ind(\alpha) \mid \per(\alpha)^4.
$$
\end{corollary}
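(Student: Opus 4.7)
The plan is to deduce the corollary as a direct specialization of Theorem~\ref{thm:main}. First I would identify the DVR $R$ appearing in the theorem with the ring of integers $\mathcal{O}_K$ of the $p$-adic field $K$. Since $K$ is $p$-adic, $\mathcal{O}_K$ is a complete discrete valuation ring, hence henselian, and complete Noetherian local rings are excellent, so the hypothesis on $R$ is met. The residue field of $\mathcal{O}_K$ is a finite field, which by the remark following the theorem is semi-finite, so the hypothesis on $k$ holds.

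Next I would check the transcendence degree hypothesis on the function field. Because $S$ is geometrically integral over $K$, the field $K(S)$ is a regular (in particular separable) extension of $K$, and $\trdeg_K K(S) = \dim S = 2$ since $S$ is a surface. Thus $L := K(S)$ is an extension of $K = \Frac(R)$ of transcendence degree $2$, as required.

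With these identifications, for any $\alpha \in \Br(K(S)) = \Br(L)$ of period prime to $6p$, Theorem~\ref{thm:main} yields $\ind(\alpha) \mid \per(\alpha)^4$, which is exactly the claim of the corollary. Since every input to the theorem is verified in a single line, there is essentially no obstacle here; the proof is a matter of recording that the $p$-adic setting is a special case of the excellent henselian setup considered in the main theorem.
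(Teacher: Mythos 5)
Your proof is correct and matches the paper's intended argument: the corollary is stated as a direct specialization of Theorem~\ref{thm:main}, and you have verified exactly the hypotheses (that $\mathcal{O}_K$ is an excellent henselian DVR, that its residue field is finite hence semi-finite, and that $K(S)$ has transcendence degree $2$ over $K$) needed to invoke it. The paper gives no separate proof because the deduction is immediate, as you observe.
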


The period of a Brauer class $\alpha$ is its order in the Brauer
group and its index is the degree of a division
algebra in the Brauer class. The period divides the index and both numbers have the same
prime factors.  Results bounding the index in terms of the period have
motivated many of the developments in the theory of the Brauer group
since the beginning of the 20th century.  See \cite{ketura}*{Section~4} for a
survey of results of this type.

For local and global fields, the index equals the period by Albert,
Brauer, Hasse, and Noether
(see~\cite{gille-szamuely}*{Remark~6.5.6}). For a finitely generated
field of transcendence degree 2 over an algebraically closed field,
the index equals the period by de Jong~\cite{dejong} (see also
\cite{lieblich}). More generally, Artin conjectured that the index
equals the period for every $C_2$ field, and he proved this for Brauer
classes of period a power of 2 or 3, see \cite{artin:Brauer-Severi}.
For a field of transcendence degree~1 over a local field, the index
divides the square of the period by Saltman~\cite{saltman-padic} for
Brauer classes of period prime to the characteristic and Parimala and
Suresh \cite{parimala_suresh:period-index} in general. Analogous
results for fields of transcendence degree 1 over higher local fields
are established in \cite{lieblich:arithmetic} and subsequently in
\cite{HHK} by other methods. For fields of transcendence degree 2 over
a finite field, the index divides the square of the period by
\cite{lieblich-tr2}.

Such results support the following conjecture (see
\cite{colliot:bourbaki05}*{Section~2.4}).

\begin{conjecture}[Period-index conjecture]
\label{conjecture}
Let $k$ be an algebraically closed, $C_1$, or $p$-adic field, and set
$e=0,1,2$ accordingly.  Let $K$ be a field of transcendence degree $n$
over $k$. For every $\alpha \in \Br(K)$, we have
$$
\ind(\alpha)|\per(\alpha)^{n-1+e}.
$$
\end{conjecture}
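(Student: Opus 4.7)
The plan is to attempt the conjecture by induction on the transcendence degree $n$. The base case $n=0$ is classical: for $e=0,1$ one has $\Br(k)=0$ (Tsen-Lang in the $C_1$ case), and for $e=2$ the Albert-Brauer-Hasse-Noether theorem gives $\ind=\per$, so $\ind\mid\per^{n-1+e}$ holds trivially. For the inductive step, first reduce to the case $\per(\alpha)=\ell^r$ a prime power via primary decomposition of $\Br(K)[N]$. Then choose a regular proper model $\mathcal{X}$ with $k(\mathcal{X})=K$ (over $k$ if $e\leq 1$; over $\operatorname{Spec}R$ with $\operatorname{Frac}R=k$ if $e=2$), and apply an alteration — de Jong's for $\ell\neq p$, or Gabber's prime-to-$\ell$ refinement as used in the present paper — to arrange that $\alpha$ extends to a class on $\mathcal{X}$ whose ramification is supported on a simple normal crossings divisor $D\subset\mathcal{X}$.

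With a good model in hand, choose a fibering $\mathcal{X}\dashrightarrow\mathcal{Y}$ over a base of transcendence degree $n-1$ whose generic fiber is a smooth curve. The restriction of $\alpha$ to this generic fiber lives over a field of transcendence degree $1$ over $k(\mathcal{Y})$, for which either Saltman/Parimala-Suresh (in the $p$-adic case) or the $n=1$ instance of the very induction being run produces an index divisor of controlled degree, say $\per(\alpha)^{\epsilon}$ with $\epsilon$ small. Simultaneously, the residues of $\alpha$ along components of $D$ define Brauer classes on $\mathcal{Y}$ and on the boundary components, to which the inductive hypothesis applies. The strategy is then to fuse a generic-fiber splitting of degree dividing $\per(\alpha)^{\epsilon}$ with a base-level splitting of degree dividing $\per(\alpha)^{n-2+e}$ into a global $\alpha$-twisted sheaf on $\mathcal{X}$ of the predicted rank, realized as a point of the moduli stack of twisted sheaves and interpreted via the standard fact that a twisted locally free sheaf of rank $r$ produces an index divisor of $r\cdot\per(\alpha)$.

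The main obstacle — and the reason the conjecture remains open — is precisely this fusion step. Deformation theory of twisted sheaves yields non-emptiness of the relevant moduli only once one controls the obstructions in $H^2$ of the twisted tangent complex along $D$, and this control breaks down both at small primes (hence the $6p$ hypothesis in Theorem~\ref{thm:main}) and at components over which $\alpha$ is wildly ramified. I therefore expect the hardest part of any honest attempt to be a local-to-global analysis of twisted $\operatorname{Ext}$ sheaves in the presence of bad reduction, or else an entirely different input — a motivic or $K$-theoretic refinement bypassing the moduli-theoretic bottleneck. Absent such an input, the strategy above recovers only a bound of the form $\ind\mid\per^{n-1+e+\delta}$ for some $\delta>0$ depending on the residual arithmetic, which is essentially the gap that Theorem~\ref{thm:main} closes in the case $n=2$, $e=2$ at the cost of the coprimality-to-$6p$ hypothesis.
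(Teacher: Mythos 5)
This item is a conjecture, not a theorem, and the paper does not prove it; it is stated precisely so that Theorem~\ref{thm:main} (which gives $\ind\mid\per^4$ or $\per^5$ for $n=2$, $e=2$) can be measured against the expected sharp bound $\ind\mid\per^3$. You correctly recognize this and make no claim to a proof, so there is nothing to check against a proof the paper does not contain. Your strategic sketch — reduce to prime power period, take a regular proper model, use alterations to arrange snc ramification, fiber over a lower-dimensional base, and try to fuse generic-fiber and base information via twisted sheaves — is a reasonable description of the known lines of attack (de Jong, Saltman, Lieblich, and this paper all fit pieces of it), and you identify the right bottleneck: there is no known way to make the fusion step produce a twisted sheaf of the optimal rank, and deformation-theoretic obstruction control leads to overcounting, which is exactly why the paper gets $\per^4$ rather than $\per^3$.

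Two small calibrations. First, the $6p$ hypothesis in Theorem~\ref{thm:main} does not come from a breakdown of the deformation theory at small primes; it comes from the nonexistence of small $\ell$-Pirutka matrices for $\ell=2,3$ (see the remark in Section~\ref{sec:some-pirutka-matr}), i.e., from the ramification-splitting input, not from $\Ext^2$ obstructions. Second, your ``fiber over a base of transcendence degree $n-1$'' inductive step is not what this paper does: the paper splits all ramification over a $3$-fold model with root extensions and then deforms twisted sheaves off the snc special fiber, rather than fibering by curves. Both are legitimate strategies, but the one in the paper avoids the delicate problem of matching residue data along the boundary that your fibration approach runs into.
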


Based on this conjecture, we do not expect the period-index bound we
achieve in Theorem~\ref{thm:main} to be optimal.  However, this is the
first proof of a general period-index bound that is uniform in the
period for fields of transcendence degree 2 over a local field.  For
classes of period a power of 2, bounds on the $u$-invariant are known
to imply uniform bounds for the index in terms of the period; our
bounds are still better than what can be attained using known
$u$-invariant results for function fields over $p$-adic fields
\cite{leep}. There are also nonuniform period-index bounds for $C_i$
fields due to Matzri~\cite{matzri:symbol_length}.

Our approach follows a strategy inspired by
Saltman~\cite{saltman-padic}: split the ramification of the Brauer
class by a field extension of controlled degree and then use geometry
to study the unramified Brauer class on a regular proper model.  For
the former, we draw on, and expand upon, a development due to
Pirutka~\cite{pirutka} (see Section \ref{sec:ram}).  After splitting
the ramification and using Gabber's refined theory of
$\ell'$-alterations to reduce to a regular (quasi-semistable) model,
we reduce the proof of Theorem~\ref{thm:main} to the following
general result. Given an integral scheme $X$, we write $\kappa(X)$ for its
function field; given $\alpha\in\Hoh^2(\kappa(X),\m_n)$, we write
$\per(\alpha)$ and $\ind(\alpha)$ for the period and index of the
associated class in $\Br(\kappa(X))$.

\begin{theorem}\label{thm:1}
Let $R$ be an excellent henselian discrete valuation ring with residue
field~$k$ of characteristic $p\geq 0$ and with fraction field
$K$. Suppose that $X$ is a connected regular {$3$-dimensional} scheme,
flat and proper over $\Spec R$. Let $\alpha\in\Hoh^2(X,\m_n)$ where
$n$ is prime to~$p$. Assume that the Brauer class of $\alpha$ is
trivial on all proper closed subschemes of the reduced special fiber
$X_{0,\red}$ of dimension at most $1$. If
$\ind(\alpha|_{\kappa(X_i)})=\per(\alpha|_{\kappa(X_i)})$ for all
irreducible components $X_i$ of $X_{0,\red}$, then
$\ind(\alpha_{\kappa(X)})=\per(\alpha_{\kappa(X)})$.
\end{theorem}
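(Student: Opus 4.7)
The plan is to produce a locally free $\alpha$-twisted sheaf $\mathcal{E}$ on $X$ of rank $d := \per(\alpha_{\kappa(X)})$. The associated Azumaya algebra $\mathcal{E}nd(\mathcal{E})$ then lies in the Brauer class of $\alpha$ with generic fiber a degree-$d$ central simple algebra, yielding $\ind(\alpha_{\kappa(X)}) \mid d = \per(\alpha_{\kappa(X)})$, and equality follows from the automatic reverse divisibility. I will construct $\mathcal{E}$ by first assembling it on the special fiber and then deforming it along the henselian base.

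On each irreducible component $X_i$ of $X_{0,\red}$, the hypothesis $\ind(\alpha|_{\kappa(X_i)}) = \per(\alpha|_{\kappa(X_i)})$ produces, after stabilizing by a matrix algebra, a locally free $\alpha$-twisted sheaf of rank exactly $d$ on a dense open of $X_i$; reflexive extension (after normalizing $X_i$ and descending if $X_i$ is singular) yields a coherent $\alpha|_{X_i}$-twisted sheaf $\mathcal{E}_i$ of rank $d$ on $X_i$. On each pairwise or higher intersection $Z = X_{i_1} \cap \cdots \cap X_{i_k}$, the class $\alpha|_Z$ is trivial by hypothesis, so the $\mathcal{E}_{i_j}|_Z$ correspond after a common trivialization to ordinary locally free sheaves of rank $d$. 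Because $\dim Z \le 1$, coherent cohomology $H^i(Z, -)$ vanishes for $i \ge 2$, and this lets me modify the $\mathcal{E}_i$ by automorphisms and elementary transformations along intersections to arrange matching restrictions; patching produces a locally free $\alpha$-twisted sheaf $\mathcal{E}_{\red}$ on $X_{0,\red}$, and a further lift across the nilpotent thickening $X_{0,\red} \hookrightarrow X_0$ yields $\mathcal{E}_0$ on $X_0$.

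The principal step is to deform $\mathcal{E}_0$ successively along the infinitesimal thickenings $X_n := X \times_R R/\mathfrak{m}_R^{n+1}$ and then algebraize. At each step the obstruction lies in $\mathrm{Ext}^2(\mathcal{E}_0, \mathcal{E}_0 \otimes I) \cong H^2(X_0, \mathcal{E}nd(\mathcal{E}_0) \otimes I)$ for the relevant ideal $I$, via the identification of twisted Ext with ordinary cohomology of the endomorphism sheaf. Since $d$ is invertible on $X_0$, the trace decomposition $\mathcal{E}nd(\mathcal{E}_0) = \mathcal{O}_{X_0} \oplus \mathcal{E}nd_0(\mathcal{E}_0)$ splits this obstruction into a scalar part in $H^2(X_0, I)$, which is killed by pre-twisting $\mathcal{E}_0$ with a suitable line bundle pulled back from $X$ so that the determinant lifts (the gerbe of $\alpha$ is already given on all of $X$, so only the determinantal obstruction needs to be absorbed), and a trace-zero part in $H^2(X_0, \mathcal{E}nd_0(\mathcal{E}_0) \otimes I)$. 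Granting vanishing of both contributions, the compatible system $(\mathcal{E}_n)$ assembles into a formal twisted sheaf on $\hat X$; Grothendieck's formal existence theorem on the proper $\hat R$-scheme algebraizes it to a coherent sheaf over $X \times_R \hat R$, and Artin approximation applied to Lieblich's algebraic moduli stack of twisted sheaves, using that $R$ is excellent and henselian, descends to the desired $\mathcal{E}$ on $X$.

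The main obstacle I expect is controlling the trace-zero obstruction in $H^2(X_0, \mathcal{E}nd_0(\mathcal{E}_0) \otimes I)$, and this is exactly where the hypothesis that $\alpha$ trivializes on every $\leq 1$-dimensional closed subscheme of $X_{0,\red}$ must be used essentially. The plan is to show, via a local-to-global or Čech-type argument on the cover of $X_{0,\red}$ by its components together with their intersections, that any obstruction class restricts trivially to the $1$-skeleton of $X_{0,\red}$, where $\mathcal{E}nd_0(\mathcal{E}_0)$ is the adjoint of a split matrix algebra with vanishing $H^2$ for dimensional reasons; a spectral sequence comparison then forces the global class to vanish. A secondary technical subtlety is Step~1 when components $X_i$ are singular, handled by normalization and careful descent of reflexive twisted sheaves, preserving both rank and Brauer class.
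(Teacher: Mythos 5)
The overall architecture of your proposal --- build a rank-$\per(\alpha)$ twisted sheaf on the special fiber, then deform formally and algebraize via Grothendieck existence plus Artin approximation --- matches the paper's strategy. But there is a genuine gap at the crucial step of killing obstructions, and this gap is where the paper's main new idea lives.

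You claim that the trace-zero obstruction in $\Hoh^2(X_0, \mathcal{E}\!nd_0(\mathcal{E}_0)\otimes I)$ vanishes because the class restricts trivially to the $1$-skeleton and ``a spectral sequence comparison then forces the global class to vanish.'' This does not follow. For a coherent sheaf $\mathcal{F}$ on a proper surface $Y$ over a field, $\Hoh^2(Y,\mathcal{F})$ is generically nonzero (Serre duality computes it as $\Hoh^0(Y,\omega_Y\otimes\mathcal{F}^\vee)^\vee$, say, when $Y$ is Gorenstein). Vanishing of the restriction to the one-dimensional intersections $X_i\cap X_j$ does not imply vanishing of the global class: in a Mayer--Vietoris type sequence the map $\Hoh^2(X_{0,\red},-)\to\bigoplus_i\Hoh^2(X_i,-)$ has a kernel fed from $\Hoh^1$ of the intersections, and the target groups $\Hoh^2(X_i,\mathcal{E}\!nd_0(\mathcal{E}_0)\otimes M)$ themselves need not vanish. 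The hypothesis that $\alpha$ is trivial on $\le 1$-dimensional subschemes enters not through any such vanishing, but through the construction of the initial sheaf $F$ on the snc surface (via an inductive extension lemma across components) and, separately, through the applicability of a local hole-punching device. The paper's essential new ingredient is exactly that device (``fracking,'' Lemma~\ref{lem:frak} and Proposition~\ref{prop:frack-attack}): one replaces $F$ by a coherent subsheaf $G\subset F$ supported the same away from finitely many regular closed points, chosen so that the dimension of the relevant trace-zero $\operatorname{Ext}^2$ strictly drops; after finitely many iterations the obstruction space is forced to zero. Nothing in your proposal accomplishes this.

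Two secondary points. First, fracking necessarily sacrifices local freeness: the deformed $\Xscr$-twisted sheaf one obtains is coherent of generic rank $\per(\alpha)$, not locally free, and the paper explicitly notes this. That suffices for the index bound (a coherent twisted sheaf of rank $\ell$ on $X$ restricts to a rank-$\ell$ twisted vector space over $\kappa(X)$), so your framing in terms of an Azumaya algebra $\mathcal{E}\!nd(\mathcal{E})$ is not attainable and is also unnecessary. Second, the statement of the theorem does not assume $X_{0,\red}$ is a strict normal crossings divisor, and the paper first reduces (via Gabber's $\ell'$-alteration theory) to a quasi-semistable model so that $X_{0,\red}$ is snc, which in particular makes each component regular; your proposal skips this reduction, and the ensuing worries about singular components and normalization would be sidestepped once the reduction is done. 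Finally, the deformation from $X_{0,\red}$ to $X_0$ is not a single nilpotent thickening in the proof: the paper constructs a specific filtration whose successive quotients are invertible on unions of components, and it is precisely this structure that makes the fracking proposition applicable one layer at a time.
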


Note that the hypothesis, that the Brauer class of $\alpha$ is trivial
on all proper subschemes of $X_{0,\red}$ of dimension at most $1$, is
automatically satisfied if $k$ is semi-finite or separably closed (see
Lemma~\ref{lem:curves}).

The proof of Theorem~\ref{thm:1} uses the deformation theory of
twisted sheaves to reduce the computation of the index of a Brauer
class on a regular model to the existence of twisted sheaves of a
certain rank on the reduced special fiber, which we can assume is a
strict normal crossings surface.  In the case when the special fiber
is smooth, this approach was carried out in
\cite{lieblich}*{Proposition~4.3.3.1}.  In the general case, we end up
proving a version of de Jong and Lieblich's period-index theorems for
strict normal crossings surfaces over separably closed and semi-finite
fields, respectively.

It is known that Saltman's theorem is the best possible for $p$-adic
curves. Indeed, examples were given by Jacob and Tignol in an appendix
to~\cite{saltman-padic} to this effect. Conjecture~\ref{conjecture}
predicts that for a surface over $\CC((t))$ one has
$\ind(\alpha)|\per(\alpha)^2$, while for a surface over a $p$-adic
field one has $\ind(\alpha)|\per(\alpha)^3$.  The nonoptimality of
our results is undoubtedly due to our overly generous splitting of
ramification. The approach taken in \cite{lieblich-tr2} improves these
kinds of bounds at the expense of a layer of stacky complexity.

\medskip
\paragraph{\bf Outline.}
In Section~\ref{sec:ram}, we generalize work of Pirutka~\cite{pirutka}
on splitting the ramification of Brauer classes.
Section~\ref{sec:alterations} considers Gabber's refined theory of
$\ell'$-alterations in the context of splitting ramification.
Sections~\ref{sec:snc} and \ref{sec:deformations}
discuss the existence and deformation theory of twisted sheaves on
proper models of the function field we consider.
Theorems~\ref{thm:main} and \ref{thm:1} are proved in
Section~\ref{sec:proof}. Starting in Section~\ref{sec:snc}, we freely
use the theory of twisted sheaves. An introduction to the use of
twisted sheaves to study questions about the Brauer group can be found
in \cite{lieblich-moduli} and \cite{lieblich}.

\medskip
\paragraph{\bf Notation.}
If $X$ is a scheme and $R$ is a commutative ring, we denote by
$\Hoh^i(X,\m_n)$, $\Hoh^i(R,\Gm)$, and so on the corresponding \'etale
cohomology groups, and by $\Br(X)$ and $\Br(R)$ the respective Brauer
groups of Azumaya algebras. Given a locally noetherian scheme $X$ and
a $\G$-gerbe $\pi : \Xscr \to X$ for some closed subgroup
$\G\hookrightarrow\G_m$, we will write $\Coh^{(1)}(\Xscr)$ for the
category of coherent $\Xscr$-twisted sheaves.  Similarly, given a
locally noetherian scheme $X$, we will write $\Coh(X)$ for the usual
categories of coherent sheaves on $X$.  When $F$ is an $\Xscr$-twisted
sheaf and $M$ is an $\Oscr_X$-module, for simplicity we write $F
\otimes M$ for the $\Xscr$-twisted sheaf $F \otimes_{\Oscr_\Xscr}
\pi^\ast M$.

\medskip
\paragraph{\bf Acknowledgments.}
We would like to thank the SQuaRE program of the American Institute of
Mathematics for its hospitality and support for this project. We also
thank the Banff International Research Station (BIRS) and the
Institute for Computational and Experimental Research in Mathematics
(ICERM) for wonderful working environments during the final stages of
preparation of this paper.  We thank Fran\c{c}ois Charles, Jean-Louis
Colliot-Th\'el\`ene, Ronen Mukamel, R. Parimala, Eryn Schultz, Lenny
Taelman, and Tony V\'arilly-Alvarado for helpful discussions.  We
especially thank Sam Payne and Dhruv Ranganathan for expert advice on
toroidal geometry.  Finally, we are very grateful to Minseon Shin for
several comments and corrections on the draft of this paper. Without
y'all, this paper would not be the same.  We are grateful to the
referees for their detailed comments on the paper.

During the course of this work, Lieblich was partially supported by
NSF CAREER grant DMS-1056129, NSF grant DMS-1600813, and a Simons Foundation Fellowship. Krashen was
partially supported by NSF Career grant DMS-1151252 and FRG grant
DMS-1463901. Ingalls was partially supported by an NSERC Discovery
grant. Auel was partially supported by NSA Young Investigator Grants
H98230-13-1-0291 and H98230-16-1-0321. Antieau was partially
supported by NSF Career grant DMS-1552766 and NSF grants DMS-1358832
and DMS-1461847.

\section{Splitting ramification}
\label{sec:ram}
\numberwithin{theorem}{subsection}

The results of this section are, for the most part, a generalization
and reworking of the results of Pirutka \cite{pirutka} (which
themselves live in a tradition of ramification-splitting results due
to Saltman~\cite{saltman-padic}).  We follow Pirutka's strategy with
minor modifications so that it works in mixed characteristic, and we
give a somewhat different argument on the existence of rational
functions whose roots split ramification.

Our ultimate goal in this section is to show that we can split all of
the ramification occurring in the Brauer classes of interest to us
with relatively small extensions.

Let $X$ be a regular noetherian integral scheme with function field
$F$.  Restriction to the generic point induces an injective map
$\Hoh^2(X,\Gm) \to \Hoh^2(F,\Gm)\iso\Br(F)$, see
\cite{grothendieck:Brauer_II}*{Proposition~1.8}.  For $A$ a
commutative ring, the canonical map $\Br(A) \to \Hoh^2(A,\Gm)_{\tors}$
is an isomorphism; see Hoobler~\cite{hoobler}.  More generally, if $X$
is a scheme admitting an ample invertible sheaf, then $\Br(X) \to
\Hoh^2(X,\Gm)_{\tors}$ is an isomorphism; see~\cite{dejong:gabber}.

\subsection{Ramification}
In this section we fix a ring $R$ and a field $F$ containing $R$.

\begin{definition}
  Fix a class $\alpha\in\Br(F)$.
  \begin{enumerate}
  \item The class $\alpha$ is \emph{unramified\/} at a discrete
  valuation ring $A$ of $F$ if $\alpha$ is in the image of the
  restriction map $\Br(A)\to\Br(F)$. Otherwise, we say that $\alpha$ is \df{ramified} at $A$.
  \item The class $\alpha$ is \emph{unramified over $R$\/} if $\alpha$
    is unramified at every discrete valuation ring $A$ of $F$ such
    that $R\subset A$.
    \item If $X$ is a regular noetherian integral scheme with function field $F$
    and $x \in X$ a point of codimension~1, we say that $\alpha$ is
    \emph{unramified at $x$\/} if $\alpha$ is unramified at the
    discrete valuation ring $\Oscr_{X,x}$ of $F$. When additionally $X=\Spec\, R$ and $x \in R$ is a nonzero divisor, we say that $\alpha$ is
    unramified at $x$ if it is unramified at the prime ideal $(x)$. In this
    circumstance, the Weil divisor consisting of the sum of all
    codimension 1 points of $X$ over which $\alpha$ is ramified is
    called the \df{(reduced) ramification divisor} of $\alpha$.
  \end{enumerate}
\end{definition}
\begin{remark}
Similarly, for a positive integer $\ell$ invertible in $F$, we can
consider the ramification of classes in $\Hoh^i(F,\m_{\ell}^{\tensor
j})$ at any discrete valuation ring $A$ of $F$ whose residue field
$\kappa$ has characteristic not dividing~$\ell$. In this case, $\alpha
\in \Hoh^i(F,\m_{n}^{\tensor j})$ is unramified if and only if $\alpha$
is contained in the kernel of the residue map $\Hoh^i(F,\m_{n}^{\tensor
j}) \to \Hoh^{i-1}(\kappa,\m_{n}^{\tensor j-1})$ defined in terms of
Galois cohomology, see \cite{colliot:santa_barbara}*{\S3.6}.
Important cases are $\Hoh^2(F,\m_\ell)$ and $\Hoh^i(F,\m_\ell^{\otimes
i})$, which correspond to Brauer classes of period $\ell$ and symbols
of length $i$ in Galois cohomology.
  % When the order of $\alpha$ is invertible in $\kappa$ then $\alpha$
  % is unramified if and only if $\alpha$ is contained in the kernel of
  % the residue map $\Br(F) \to H^1(\kappa,\Z/\ell\Z)$ defined in terms
  % of Galois cohomology.
\end{remark}

\begin{lemma}\label{lem:unram-climbs}
  Suppose $L/F$ is a finite field extension. If $\alpha\in\Br(F)$ is
  unramified over $R$ then the restriction $\alpha_L\in\Br(L)$ is
  unramified over $R$.
\end{lemma}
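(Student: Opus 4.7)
The plan is to pull back through the finite extension $F \hookrightarrow L$ and reduce directly to the hypothesis on $\alpha$. Let $B$ be an arbitrary discrete valuation ring of $L$ with $R \subset B$; it suffices to show that $\alpha_L$ lies in the image of $\Br(B) \to \Br(L)$. I would contract to $F$ by setting $A = B \cap F$, so that $A$ is the valuation ring on $F$ of the restriction of the valuation of $B$, and automatically $R \subset A \subset B$.

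The first structural observation is that the value group of $A$ embeds into the value group $\mathbb{Z}$ of $B$, so it is either trivial or infinite cyclic. If it is trivial then $A = F$ and $F \subset B$, so $\alpha$ itself pulls back along $\Br(F) \to \Br(B) \to \Br(L)$ to $\alpha_L$, which is therefore unramified at $B$. Otherwise $A$ is a discrete valuation ring of $F$ containing $R$, the hypothesis applies, and there is a class $\widetilde{\alpha} \in \Br(A)$ whose image in $\Br(F)$ is $\alpha$; pushing $\widetilde{\alpha}$ through $\Br(A) \to \Br(B) \to \Br(L)$ gives $\alpha_L$ by functoriality, so $\alpha_L$ is unramified at $B$. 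Since $B$ was arbitrary, $\alpha_L$ is unramified over $R$.

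The only nonformal input is the elementary fact that the restriction of a discrete valuation to a subfield is either trivial or again a discrete valuation, which amounts to a subgroup of $\mathbb{Z}$ being cyclic. Apart from this, the argument is an unpacking of the definitions combined with functoriality of the Brauer group, and I do not anticipate any serious obstacle; notice in particular that no use is made of finiteness of $L/F$ beyond ensuring that $L$ is a field, so the same argument would handle arbitrary extensions.
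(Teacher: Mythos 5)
Your proof is correct and takes essentially the same route as the paper: both arguments pass from a DVR $B$ of $L$ containing $R$ to its intersection $A = B \cap F$, apply the hypothesis at $A$, and push forward along $\Br(A) \to \Br(B)$. The only difference is that you explicitly handle the possibility that the restricted valuation is trivial — a case that in fact cannot occur when $L/F$ is algebraic (a nontrivial valuation on $L$ restricting trivially to $F$ would force $v(a_0) > 0$ for the nonzero constant coefficient $a_0 \in F^\times$ of a minimal polynomial, a contradiction) — but including it costs nothing and, as you note, makes the argument valid for arbitrary extensions.
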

\begin{proof}
  For any discrete valuation ring $A$ with fraction field $L$, the
  intersection $A\cap F$ is a discrete valuation ring with fraction
  field $F$, and if $\alpha$ is unramified at $A \cap F$, then the
  restriction $\alpha_L \in \Br(L)$ is unramified at $A$.
\end{proof}

\begin{example}\label{ex:threefold-purity}
If $X$ is a regular integral scheme with function field $F$, which
admits a proper surjective morphism $X \to \Spec R$, and $\alpha \in
\Br(X)$, then by the valuative criterion, the image of $\alpha$ under
the map $\Br(X) \to \Br(F)$ is unramified over $R$.  Conversely, by
purity for regular local rings (\cite{gabber:brauer}*{Theorem~2'} for
schemes of dimension at most $3$ and \cite{cesnavicius} in general),
any $\alpha \in \Br(F)$ that is unramified over $R$ is in the image of
the map $\Hoh^2(X,\Gm) \to \Br(F)$.
\end{example}

The following gives a useful criterion for checking that Brauer
classes become unramified after a finite extension.

\begin{lemma}
\label{lem:unramified_criterion}
Let $R$ be a commutative ring and $X$ a regular integral scheme with a
proper surjective morphism $X \to \Spec R$.  Let $F$ be the function
field of $X$ and $L/F$ a finite extension.  Let $\alpha \in \Br(F)$.
If for every point $x \in X$ with $R \subset \Oscr_{X,x}$,
there exists a regular ring $S \subset L$ that is an integral
extension of $\Oscr_{X,x}$, such that the image of $\alpha$ in $L$ is
unramified over~$S$, then the image of $\alpha$ in $\Br(L)$ is
unramified over $R$.
\end{lemma}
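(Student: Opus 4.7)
The plan is to show that, for every discrete valuation ring $A$ of $L$ with $R\subset A$, the class $\alpha_L$ is unramified at $A$; by the definition recalled above this is exactly what it means for $\alpha_L\in\Br(L)$ to be unramified over $R$. The strategy is to use the valuative criterion of properness to center $A$ at a point $x\in X$, invoke the hypothesis at that $x$ to produce a regular ring $S\subset L$, and then observe that integral closure of $A$ in $L$ forces $S\subset A$, so that ``unramified over $S$'' immediately yields ``unramified at $A$''.

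In more detail, fix such an $A$ and set $V:=A\cap F$. Since $L/F$ is finite and $A$ is integrally closed in $L$, the containment $F\subset A$ would force $L\subset A$ and hence $A=L$; so $V\neq F$. Thus $V$ is a valuation ring of $F$ whose value group is a nontrivial subgroup of $\ZZ$, hence $\ZZ$ itself, i.e., $V$ is a DVR of $F$ containing $R$. Applying the valuative criterion of properness to $X\to\Spec R$ (together with the generic point $\Spec F\to X$), we obtain a unique lift $\Spec V\to X$ over $\Spec R$. Let $x\in X$ be the image of the closed point of $\Spec V$, so that $R\subset\Oscr_{X,x}\subset V\subset A$.

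By hypothesis applied to this $x$, there exists a regular ring $S\subset L$, integral over $\Oscr_{X,x}$, such that $\alpha_L$ is unramified over $S$. Every element of $S$ is integral over $\Oscr_{X,x}\subset A$; since $A$ is integrally closed in its fraction field $L$, this forces $S\subset A$. By the very definition of ``unramified over $S$'' (applied to DVRs of $L$ containing $S$), the class $\alpha_L$ is unramified at $A$, and letting $A$ vary completes the proof.

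The only step that requires any care is the first one---cutting a DVR of $L$ down to a DVR of $F$ so that the valuative criterion of $X\to\Spec R$ can be applied. The rest is formal: integrality propagates the inclusion $\Oscr_{X,x}\subset A$ up to $S\subset A$, and the definition of ``unramified over $S$'' supplies the conclusion for $A$. I do not anticipate a serious technical obstacle; this lemma essentially packages the valuative criterion together with the definition of unramified, and the regularity of $S$ is not used beyond what is already implicit in the notion of unramifiedness over $S$.
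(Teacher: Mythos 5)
Your proposal is correct and follows essentially the same route as the paper: intersect $A$ with $F$ to get a DVR $V$ of $F$ containing $R$, center $V$ on $X$ via the valuative criterion, invoke the hypothesis at the resulting point $x$, use integrality to deduce $S\subset A$, and conclude. The only difference is that you supply the (routine) verification that $A\cap F$ is a DVR, which the paper states without proof.
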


\begin{proof}
Let $A$ be a discrete valuation ring with fraction field $L$
containing $R$.  Then the intersection $A \cap F$ is a discrete
valuation ring with fraction field $F$ containing $R$.  By the
valuative criterion for properness, there exists an $R$-morphism
$\Spec A \cap F \to X$.  Then the image $x \in X$ of the closed point
is regular with $R \subset \Oscr_{X,x}$.  By our hypothesis, there
exists a regular ring $S \subset L$ that is an integral extension of
the regular local ring $\Oscr_{X,x}$ on which the image of $\alpha$ in
$\Br(L)$ is unramified over $S$.  Since $A$ is integral over $A\cap F$
and $\Oscr_{X,x} \subset A \cap F$, it follows that the integral
closure of $\Oscr_{X,x}$ in $L$ is contained in $A$. Hence $S$, being
integral over $\Oscr_{X,x}$, is contained in $A$.  Since $\alpha_L$ is
unramified over $S$, it is unramified at $A$ by definition.
\end{proof}

\subsection{Local description of ramification}
\label{sec:ramification}

Recall that a regular system of parameters in a regular local ring is
a minimal generating set of the maximal ideal.  A subsequence of a
regular system of parameters is called a \df{partial} regular system
of parameters.  Not every regular sequence is a partial regular system
of parameters.
We fix a positive integer $\ell$.
% We fix a positive integer $\ell$ and assume that $F$ contains a
% primitive $\ell$th root of unity.  Hence, we may choose an
% identification of $\m_\ell^{\tensor t}$ with $\m_\ell$, and in
% particular, an identification $H^2(F,\m_\ell^{\tensor 2}) = \Br(F)$.

\begin{definition}
Let $R$ be a regular local ring with fraction field $F$ and assume
that $\ell$ is invertible in $R$. We say that $\alpha \in\Hoh^2(F,
\m_\ell^{\otimes 2})$ is \df{nicely ramified} if $\alpha$ is ramified
only along a partial regular system of parameters $x_1, \ldots, x_h$
of~$R$ and we can write
  \[
      \alpha = \alpha_0 + \sum_{i=1}^h (u_i, x_i) + \sum_{1\leq i<j\leq h} m_{i,j}(x_i, x_j)
  \]
  for an unramified class $\alpha_0$ and some $u_i\in R\mult$ and
  $m_{i,j} \in \ZZ$.

  More generally, if $X$ is a regular noetherian integral scheme with function
  field $F$ with $\ell$ invertible on $X$, and $\alpha \in\Hoh^2(F,
  \m_\ell^{\otimes 2})$, then we say that $\alpha$ is nicely ramified
  on $X$ if it is nicely ramified at every local ring of $X$.
\end{definition}

We will need the following result, proved in the two-dimensional case
in \cite{saltman-padic} and in the equicharacteristic case in
\cite{pirutka}*{Section~3,~Lemma~2}.

\begin{lemma}
  \label{nicely_ramified}
  Let $X$ be a regular noetherian integral scheme with function field $F$ and let
  ${\alpha \in\Hoh^2(F, \m_\ell^{\otimes 2})}$ where $\ell$ is
  invertible on $X$. If $\alpha$ is ramified only along a
  strict normal crossings divisor, then $\alpha$ is nicely ramified on
  $X$.
\end{lemma}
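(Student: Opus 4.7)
Since being nicely ramified is a condition on each local ring, fix a point $x\in X$ and work in $R=\Oscr_{X,x}$. Let $x_1,\ldots,x_h\in R$ be the partial regular system of parameters whose vanishing loci are the components of the strict normal crossings divisor passing through $x$ (so $h=0$ when $\alpha$ is unramified at $x$). The plan is to construct $u_i\in R^\times$ and $m_{ij}\in\ZZ$ such that the correction
$$
\gamma := \sum_{i=1}^h (u_i, x_i) + \sum_{1\leq i<j\leq h} m_{ij}(x_i, x_j)
$$
has the same residues as $\alpha$ at every codimension-one point of $\Spec R$; purity for the Brauer group then forces $\alpha_0:=\alpha-\gamma$ to be unramified on $R$, and the decomposition $\alpha=\alpha_0+\gamma$ exhibits $\alpha$ as nicely ramified.

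Set $\beta_k := \partial_{(x_k)}(\alpha)\in\Hoh^1(\kappa(x_k),\m_\ell)=\kappa(x_k)^\times/\ell$; by the SNC hypothesis these are the only nonzero residues of $\alpha$. The key input is the Kato complex
$$
\Hoh^2(F,\m_\ell^{\otimes 2})\to\bigoplus_{x\in(\Spec R)^{(1)}}\Hoh^1(\kappa(x),\m_\ell)\to\bigoplus_{y\in(\Spec R)^{(2)}}\ZZ/\ell,
$$
whose composition vanishes. At a codimension-two point $y\in V(x_k)$ lying on no other component, only $\beta_k$ contributes to the second differential, so $v_y(\beta_k)\equiv 0\pmod\ell$. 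At an SNC intersection $y=V(x_i)\cap V(x_j)$, the contributions from $\beta_i$ and $\beta_j$ cancel, and using the tame-symbol identities $\partial_{(x_i)}(\{x_i,x_j\})=\bar x_j^{-1}$ and $\partial_{(x_j)}(\{x_i,x_j\})=\bar x_i$ this cancellation takes the form $v_{(\bar x_j)}(\beta_i)+v_{(\bar x_i)}(\beta_j)\equiv 0\pmod\ell$.

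Since $R/(x_k)$ is a regular local ring, hence a UFD by Auslander--Buchsbaum--Serre, the vanishing of $v_D(\beta_k)$ modulo $\ell$ at every height-one prime $D$ distinct from the $(\bar x_j)$'s produces a factorization $\beta_k=\bar v_k\cdot\prod_{j\neq k}\bar x_j^{a_{k,j}}$ modulo $\ell$-th powers, with $\bar v_k\in(R/(x_k))^\times$ and $a_{k,j}\in\ZZ$. I lift $\bar v_k$ to $u_k\in R^\times$, which is immediate since the reduction $R^\times\to (R/(x_k))^\times$ is surjective for any local ring. For $i<j$ I then define $m_{ij}$ to be the $\bar x_j$-exponent read off from $\beta_i$; the Kato compatibility at $V(x_i)\cap V(x_j)$ guarantees this equals the $\bar x_i$-exponent read off from $\beta_j$, so the single symbol $m_{ij}(x_i,x_j)$ accounts for both contributions. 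A direct residue computation confirms $\partial_{(x_k)}(\alpha-\gamma)=0$ for every $k$, completing the argument.

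The main obstacle is the Kato-complex bookkeeping: one must distinguish codimension-two points lying on a single SNC component from those at pairwise intersections, translate the vanishing of the second differential into the concrete valuation statements above, and keep track of tame-symbol signs so that the $m_{ij}$ extracted from $\beta_i$ and from $\beta_j$ genuinely coincide. Once this is in place, the existence of the unit $\bar v_k$ and its lift to $R^\times$ are routine. Because the Kato complex is available whenever $\ell$ is invertible, the same argument handles equicharacteristic and mixed characteristic simultaneously, extending the cases treated by Saltman and Pirutka.
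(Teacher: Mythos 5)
Your proposal is essentially correct and gives a direct, non-inductive version of the paper's argument.

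The paper proceeds by induction on $h$: compute the residue $\beta_1 = \partial_{(x_1)}(\alpha)$, use the fact that $R/(x_1)$ is a UFD together with the Gersten (Kato) complex to write it as $\bar u_1\prod_{i\geq 2}\bar x_i^{m_{i,1}}$, subtract $(b,x_1)$ with $b = u_1\prod x_i^{m_{i,1}}$ to kill the ramification along $(x_1)$, and then recurse on the remaining $h-1$ components. By correcting one divisor at a time, the paper never needs to worry about whether the coefficients $m_{i,j}$ extracted from different components agree. You instead compute \emph{all} residues $\beta_k$ at once, extract the full matrix of exponents $a_{k,j}$, and then use the cocycle relation coming from the vanishing of the second Kato differential at each pairwise intersection to verify that a single integer $m_{ij}$ can be chosen consistently for the symbol $(x_i,x_j)$. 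This makes the underlying structure more explicit — you are literally exhibiting the residue profile of $\alpha$ as the coboundary of the natural SNC data — at the cost of having to make the consistency check. Both proofs use the same two ingredients (the Kato complex over $\Spec R$ and unique factorization in the regular local ring $R/(x_k)$), and both reduce from $X$ to its local rings, so the difference is organizational rather than conceptual.

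One correction to your bookkeeping: the Kato cancellation at $y = (x_i,x_j)$ gives $a_{i,j} + a_{j,i}\equiv 0\pmod{\ell}$, so the $\bar x_j$-exponent of $\beta_i$ is the \emph{negative} of the $\bar x_i$-exponent of $\beta_j$, not equal to it. This is of course exactly compensated by the sign asymmetry in the tame symbol you recorded, $\partial_{(x_i)}(x_i,x_j) = \bar x_j^{-1}$ versus $\partial_{(x_j)}(x_i,x_j) = \bar x_i$: taking $m_{ij} := a_{j,i} = -a_{i,j}$ makes $\partial_{(x_k)}(\gamma) = \beta_k$ for every $k$, so the argument goes through as intended. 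Also, you do not need to invoke purity at the end: once all codimension-one residues of $\alpha - \gamma$ vanish, the class is unramified in the sense required by the definition.
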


\begin{proof}
  Let $R$ be a local ring of $X$. By hypothesis, $\alpha$ is ramified
  only along a partial regular system of parameters $x_1, \ldots, x_h$. We
  proceed by induction on $h$. For $h = 1$, let $F_1$ be the fraction
  field of $R/(x_1)$ and let $\beta = (b) \in F_1\mult/F_1^{\times \ell} =
  \Hoh^1(F_1, \m_\ell)$ be the residue of $\alpha$, where $(b)$ denotes the class
  (or symbol) of $b$ in $F_1^\times/F_1^{\times\ell}$.  Then it follows that
  $\beta$ is unramified by considering the Gersten complex
  \[\Hoh^2(F, \m_\ell^{\otimes 2}) \to \bigoplus_{p \in \Spec(R)^{(1)}}
  \Hoh^1(k(p), \m_\ell) \to \bigoplus_{q \in
  \Spec(R)^{(2)}}\Hoh^0(k(q), \ZZ/\ell\ZZ).\]
  For the construction of the complex in this generality,
  see~\cite{kato}*{Section~1}.  Since $R/(x_1)$ is a regular local
  ring, it is a UFD, and we may write $b = \ol{u}_1\prod\pi_i^{e_i}$
  for irreducible elements $\pi_i$ and a unit $\ol{u}_1$ of
  $R/(x_1)$. Since the residue of $b$ at the prime $(\pi_i)$ is the
  class of $e_i$ modulo $\ell$, it follows that each $e_i$ is a
  multiple of $\ell$, and thus $\beta = (\ol{u}_1) \in
  F_1\mult/F_1^{\times \ell}$.

  Lifting $\ol{u}_1$ to a unit $u_1$ of $R$, it follows that the class
  $\alpha - (u_1, x_1)$ is unramified on $R$. In particular, we may
  write $\alpha = \alpha_0 + (u_1, x_1)$ for $\alpha_0$ unramified as
  claimed. For $h > 1$, let $\beta \in F_1\mult/F_1^{\times \ell}$ be
  the residue of $\alpha$ at the prime $(x_1)$, as before. Considering
  the Gersten complex, the residue of $\beta$ must be canceled by the
  residues of $\alpha$ along primes in $R/(x_1)$. In particular, it
  follows that $\beta$ can only be ramified along the primes $\ol{x}_2,
  \ldots, \ol{x}_h$ in $R/(x_1)$. Since $R/(x_1)$ is a regular local
  ring, it is a UFD, and we may represent $\beta$ by an element $\ol{b} =
  \overline{u}_1 \prod_{i=2}^h \overline x_i^{m_{i, 1}}$ with $\ol{u}_1$ a
  unit in $R/(x_1)$. In particular, we can lift $\ol{b}$ to $b = u_1 \prod_{i =
    2}^h x_i^{m_{i, 1}}$, where $u_1 \in R\mult$. It follows that
  \[\alpha - (b, x_1) = \alpha - (u_1, x_1)
    - \sum_{i = 2}^h m_{i, 1} (x_i, x_1)\]
  is unramified along $(x_1)$ and only ramified along the primes $(x_i)$ for
  $i = 2, \ldots, h$. By induction, we may write
  \[\alpha - (b,x_1) = \alpha_0  + \sum_{j = 2}^h (u_j, x_j) + \sum_{j, k \neq 1} m_{j, k} (x_j, x_k), \]
  yielding $\alpha = \alpha_0 + \sum (u_i, x_i) + \sum m_{i,j} (x_j, x_k)$ as
  desired.
\end{proof}

\subsection{Putting ramification in nice position}

We will need the following generalization of \cite{pirutka}*{Lemma~3},
which from the toroidal geometry perspective, is related to the
process of barycentric subdivision.  The standard reference for
toroidal geometry is \cite{KKMS}, which is written over an
algebraically closed base field.  However, all the constructions work
over an arbitrary base scheme as outlined in
\cite{FaltingsChai}*{IV~Remark~2.6}.

\begin{definition}
    Let $D \subset X$ be a strict normal crossings divisor in a
    regular noetherian scheme. We define a presentation of $D$ to be a
    finite collection $\{D_i\}_{i\in I}$ of regular, but not necessarily connected, divisors
    such that $D=\cup_{i\in I}D_i$ and $D_i=D_j$ implies $i=j$. We call $|I|$
    the length of the presentation.
\end{definition}

For example, we may choose our presentation to simply consist of the
irreducible components of $D$. On the other hand, the next lemma shows that
after possibly blowing up, we may find a presentation whose length is
bounded by the dimension of $X$.

\begin{lemma}
  \label{colouring lemma}
  Let $X$ be a regular noetherian scheme of dimension $d$ and
  suppose that $D \subset X$ is a strict normal crossings
  divisor. There exists a sequence of blowups along regular subschemes
  {$f: X' \to X$} such that $f^{-1}(D)$ admits a presentation of length at
  most $d$.
\end{lemma}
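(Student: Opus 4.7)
The plan is to realize a geometric analogue of the barycentric subdivision of the dual complex of $D$ through a sequence of SNC-preserving blowups, and then read off the presentation from a proper coloring of the subdivision.

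Write $D = D_1 \cup \cdots \cup D_r$ for the decomposition into irreducible components and, for any nonempty $S \subseteq \{1,\ldots,r\}$, set $D_S := \bigcap_{i \in S} D_i$.  The SNC hypothesis implies that each nonempty $D_S$ is regular of codimension $|S|$ in $X$ (so in particular $|S| \leq d$) and that its connected components are pairwise disjoint.  These strata form the cells of the dual complex $\Delta(D)$ of $D$, a simplicial complex of dimension at most $d-1$.  First I would perform the blowups dictated by $\Delta(D)$: starting with $k = d$ and decreasing to $k = 2$, blow up the disjoint union of all codimension-$k$ strata of the current total transform of $D$.  A standard local computation in an SNC chart $D_i = (x_i)$ shows that each such center is regular, that each blowup preserves the SNC condition of the total transform, and that the exceptional divisor introduced is in canonical bijection with the connected components of the center.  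Denote the composition by $f \colon X' \to X$; the combinatorial effect of these blowups is exactly that the dual complex of $f^{-1}(D)$ is the barycentric subdivision $\Delta(D)'$.

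Next I would read off the desired presentation from a proper coloring of $\Delta(D)'$ by $d$ colors.  Indeed, for any simplicial complex, its barycentric subdivision admits the canonical proper coloring of its vertices by simplex dimension: two vertices of $\Delta(D)'$ are joined by an edge only when the corresponding simplices of $\Delta(D)$ are strictly nested, hence have different dimensions.  Translating back, assign to each irreducible component of $f^{-1}(D)$ a color in $\{1,\ldots,d\}$ equal to the codimension of the stratum of $D$ it corresponds to: strict transforms of the $D_i$ receive color $1$, while exceptional divisors arising from a codimension-$k$ stratum receive color $k$.  For each $c \in \{1,\ldots,d\}$, let $D'_c$ be the union of components of $f^{-1}(D)$ of color $c$; the coloring property forces its components to be pairwise disjoint, so $D'_c$ is a (disconnected but) regular divisor, and by construction $\{D'_1,\ldots,D'_d\}$ is a presentation of $f^{-1}(D)$ of length at most $d$.

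The principal technical obstacle is promoting the purely combinatorial coloring statement on $\Delta(D)'$ to an honest geometric disjointness statement for the components of $f^{-1}(D)$.  This amounts to carefully tracking the strict transforms and exceptional divisors at each stage of the blowup sequence and verifying that two components emerging from strata of the same codimension never intersect on $X'$.  This is most transparent in the toric/toroidal local model of an SNC divisor, where every step reduces to an explicit combinatorial subdivision of a cone; the extension of \cite{KKMS} to arbitrary noetherian bases noted in \cite{FaltingsChai}*{IV~Remark~2.6} is exactly what lets this argument run in the mixed-characteristic setting required by Theorem~\ref{thm:main}.
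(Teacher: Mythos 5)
Your proposal takes essentially the same route as the paper: realize the barycentric subdivision of the dual complex of $D$ by a sequence of blowups, then extract a presentation of length at most $d$ from the canonical proper coloring of the barycentric subdivision by dimension of the parent simplex.  The paper's proof of this lemma does exactly this, relying on Lemma~\ref{lem:baryorder} (barycentric subdivision equals order complex) and Lemma~\ref{lem:blowup} (blowing up a stratum realizes a star subdivision of the naive dual complex) to make the dictionary between blowups and subdivisions precise.

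One point in your write-up deserves a correction, however.  You describe the blowup centers at stage $k$ as ``all codimension-$k$ strata of the \emph{current total transform} of $D$.''  Read literally this is not the paper's construction, and it does not produce the barycentric subdivision: after the codimension-$d$ blowups the total transform acquires new codimension-$(d-1)$ strata of the form $\widetilde D_i \cap E$ which you would be including, and these are generally not disjoint from the strict transforms $\widetilde D_i \cap \widetilde D_j$ (consider three coordinate hyperplanes through the origin in $\mathbb A^3$ after blowing up the origin).  The correct centers are the \emph{strict transforms of the codimension-$k$ strata of the original $D$}, blown up in decreasing order of codimension; these are pairwise disjoint at the stage where they are blown up because their lower-dimensional mutual intersections have already been separated, and Lemma~\ref{lem:blowup} is exactly the bookkeeping showing that each such blowup effects a star subdivision, so that the iterated result is $\mathrm{Sd}(\Sigma(D))$.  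You gesture toward this technical step as something to be checked in toric local models, which is fine as a sketch, but the total-versus-strict-transform confusion in the blowup centers is a genuine slip: as stated, the asserted identification of the resulting dual complex with $\mathrm{Sd}(\Sigma(D))$ fails.
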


Before giving the proof, we recall some combinatorial notions related
to inverse images of strict normal crossings divisors under certain blowups.

\begin{definition}
An \df{(abstract) simplicial complex} $\Sigma$ is a collection of
nonempty finite sets, called \df{simplices}, closed under inclusion.
The union of all simplices is the \df{vertex set} of $\Sigma$. 
    \begin{enumerate}
    \item The elements $\sigma\in\Sigma$ of cardinality $i+1$ are
    called \df{$i$-simplices}.  We write $\Sigma_i$ for the subset of
    all $i$-simplices in $\Sigma$. By abuse of notation, we also use
    the term vertex for a $0$-simplex and the symbol $\Sigma_0$ for
    the vertex set.
    \item The \df{dimension} of $\Sigma$ is defined to be the
    maximal $i \geq 0$ such that $\Sigma_i \neq \varnothing$, assuming
    that this maximum exists.
    \item Given a simplicial complex and a non-empty simplex
    $\sigma \subset \Sigma$, we define the \df{star subdivision}
    $\Sigma \star \sigma$ with respect to $\sigma$ to be the
    simplicial complex whose vertex set is the vertex set of $\Sigma$
    together with a new vertex $e_\sigma$, and whose simplices are
            \begin{gather*}
            \{\tau \in \Sigma \;:\; \sigma \not\subseteq \tau \}
            {\;\textstyle\bigcup\;} \{ (\tau
            \smallsetminus J) \cup \{e_\sigma\} \;:\; \emptyset\neq J \subseteq \sigma
            \subseteq \tau \in \Sigma\} \\
            = \{\tau \in \Sigma \;:\; \sigma \not\subseteq \tau \}
            {\;\textstyle\bigcup\;} \{ (\tau' \cup \{e_\sigma\}
            \;:\; \emptyset \neq J \subseteq \sigma \subseteq \tau' \cup J
            \in \Sigma \text{, some } J \subseteq \Sigma_0 \setminus \tau'\}
            .
            \end{gather*}
        \item We formally define $\Sigma \star \varnothing = \Sigma$.
    \end{enumerate}
\end{definition}

\begin{remark}
Let $\Sigma$ be a simplicial complex.
    \begin{enumerate}
    \item[(i)] If $\sigma$ is a $0$-simplex, then $\Sigma \star \sigma$ is
    isomorphic to $\Sigma$ by sending $e_\sigma$ to $\sigma$.
    \item[(ii)] If $\sigma$ and $\tau$ are simplices such that neither
    $\sigma\subseteq\tau$ nor $\tau\subseteq\sigma$, then $(\Sigma
    \star \sigma) \star \tau = (\Sigma \star \tau) \star \sigma$.  In
    particular, for any subset $\Sigma' \subseteq \Sigma$ consisting
    of simplices none of which contain any other, we can define the
    iterated star subdivision $\Sigma \star \Sigma'$ with respect to
    all simplices in $\Sigma'$.
    \end{enumerate}
\end{remark}

\begin{definition} Let $\Sigma$ be a simplicial complex.
    \begin{enumerate}
        \item[(a)] The \df{barycentric subdivision} $\mathrm{Sd}(\Sigma)$ of $\Sigma$ is the iterated star
            subdivision $$(((\Sigma \star \Sigma_d) \star \Sigma_{d-1}) \star
            \dotsm ) \star \Sigma_1,$$ see \cite{KKMS}*{III,~\S2A}.
        \item[(b)] The \df{order complex} $\mathrm{Fl}(\Sigma)$ of $\Sigma$ is the simplicial complex with a vertex for each simplex of
            $\Sigma$ and whose $i$-simplices are all length $i+1$ flags of
            inclusions of simplices of $\Sigma$.
    \end{enumerate}
\end{definition}

We will need the following combinatorial lemma.

\begin{lemma}\label{lem:baryorder}
    There is a natural isomorphism of simplicial complexes
    $\mathrm{Sd}(\Sigma)\iso\mathrm{Fl}(\Sigma)$.
\end{lemma}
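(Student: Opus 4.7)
The plan is to construct an explicit simplicial map $\phi\colon \mathrm{Sd}(\Sigma)\to\mathrm{Fl}(\Sigma)$ and verify it is an isomorphism. Adopting the convention $e_{\{v\}}:=\{v\}$ for original vertices $v\in\Sigma_0$ (consistent with item~(i) of the preceding remark), each vertex of $\mathrm{Sd}(\Sigma)$ has the form $e_\sigma$ for a unique nonempty simplex $\sigma\in\Sigma$, and the vertex set of $\mathrm{Fl}(\Sigma)$ is likewise the set of nonempty simplices of $\Sigma$. I define $\phi$ on vertices by $\phi(e_\sigma)=\sigma$. The lemma then reduces to the combinatorial claim that $\{e_{\sigma_0},\ldots,e_{\sigma_k}\}$ is a simplex of $\mathrm{Sd}(\Sigma)$ if and only if, after reordering, $\sigma_0\subsetneq\sigma_1\subsetneq\cdots\subsetneq\sigma_k$ is a chain in $\Sigma$.

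I would prove this claim by induction on $d=\dim\Sigma$. The base case $d=0$ is immediate: no nontrivial star subdivisions are performed, so $\mathrm{Sd}(\Sigma)=\Sigma$ and both complexes consist only of vertices, corresponding to length-$1$ chains. For the inductive step, set $\Sigma':=\Sigma\star\Sigma_d$. A direct reading of the definition of star subdivision shows that the simplices of $\Sigma'$ fall into two families: those simplices of $\Sigma$ not containing any $d$-simplex (in particular, every simplex of $\Sigma$ of dimension $<d$), and those of the form $(\sigma\setminus J)\cup\{e_\sigma\}$ with $\sigma\in\Sigma_d$ and $\emptyset\neq J\subseteq\sigma$. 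The subsequent subdivisions $\star\Sigma_{d-1}\star\cdots\star\Sigma_1$ are performed only with respect to simplices of $\Sigma$ of dimension at most $d-1$, each of which remains a simplex in $\Sigma'$.

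To conclude, I would analyze the simplices of $\mathrm{Sd}(\Sigma)$ according to whether they contain any barycenter $e_\sigma$ with $\sigma\in\Sigma_d$. The simplices that do not lie in the full subcomplex spanned by $\{e_\tau\,:\,\dim\tau<d\}$, which I claim may be identified with the barycentric subdivision $\mathrm{Sd}(\Sigma^{(d-1)})$ of the $(d-1)$-skeleton of $\Sigma$; the inductive hypothesis then matches its simplices with chains in $\Sigma$ not involving a top-dimensional simplex. For simplices containing $e_\sigma$ with $\sigma\in\Sigma_d$, the crucial identification is that the link of $e_\sigma$ in $\mathrm{Sd}(\Sigma)$ is exactly $\mathrm{Sd}(\partial\sigma)$, a complex of dimension $d-1$; the inductive hypothesis then matches its simplices with chains of proper faces of $\sigma$, so coning with $\{e_\sigma\}$ yields precisely all chains ending at $\sigma$. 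The main obstacle is verifying the two identifications just described, which requires unwinding how the outer subdivisions $\star\Sigma_{d-1}\star\cdots\star\Sigma_1$ commute with the cone structure introduced at each $e_\sigma$ by $\star\Sigma_d$; once this compatibility is checked by direct inspection of the definitions, the bijection on simplices is immediate and gives the desired simplicial isomorphism.
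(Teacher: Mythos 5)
The paper offers no argument for this lemma; it simply cites Kozlov's textbook (\S 2.1.5), so there is nothing to compare against, and a self-contained proof such as yours adds genuine content. Your inductive outline is sound and the two identifications you flag do hold. After the first stage $\Sigma\star\Sigma_d$, the simplices not containing any $e_\sigma$ with $\sigma\in\Sigma_d$ are exactly the simplices of the skeleton $\Sigma^{(d-1)}$ (since $\dim\Sigma=d$, a simplex contains a $d$-simplex if and only if it \emph{is} one, so your parenthetical ``in particular, every simplex of $\Sigma$ of dimension $<d$'' should actually read ``equivalently''), the closed star of each $e_\sigma$ is the cone on $\partial\sigma$, and no simplex of $\Sigma\star\Sigma_d$ meets two distinct top-dimensional barycenters. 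The later subdivisions $\star\Sigma_{d-1},\dotsc,\star\Sigma_1$ are along simplices $\tau$ of dimension $<d$: each such $\tau$ either lies in some $\sigma\in\Sigma_d$, in which case subdividing at $\tau$ subdivides $\partial\sigma$ inside the link of $e_\sigma$ exactly as it does inside $\Sigma^{(d-1)}$, or has a vertex outside every $\sigma$ whose star it could threaten, in which case the star of $e_\sigma$ is untouched. So both the full subcomplex you identify with $\mathrm{Sd}(\Sigma^{(d-1)})$ and the links you identify with $\mathrm{Sd}(\partial\sigma)$ are stable under the remaining star subdivisions, and the dimension induction closes. The one caveat is that ``direct inspection of the definitions'' covers real bookkeeping about how full subcomplexes and links commute with iterated star subdivision; if you want the proof to stand on its own rather than citing Kozlov, you should write out that stability lemma explicitly.
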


\begin{proof}
    See \cite{kozlov}*{\S2.1.5}.
\end{proof}

Now, we apply these definitions to a simplicial complex associated to
a presentation of a strict normal crossings divisor.

\begin{definition}
Let $D \subset X$ be a strict normal crossings divisor in a regular
noetherian scheme and let $\{D_i\}_{i \in I}$ be a presentation of
$D$. We define a simplicial complex $\Sigma(D) = \Sigma(D, \{D_i\}_{i
\in I})$ with vertex set $I$ such that a subset $J \subset I$ is in
$\Sigma(D)$ whenever $\cap_{j \in J} D_j \neq \emptyset$. We call
$\Sigma(D)$ the \df{naive dual complex} associated to the
presentation.
\end{definition}

\begin{remark}
The usual dual complex, which includes distinct
$i$-simplices for each irreducible component of each intersection of
$i+1$ components of $D$, has better homotopical properties (e.g., see
\cite{payne}); the naive version will suffice for our purposes.
\end{remark}

Following standard conventions, for $J \subset I$, we will
let $D_J$ denote the intersection $\cap_{j \in J} D_j$. By
hypothesis, $D_J$ is nonempty whenever $J \in \Sigma(D)$ and since $D$ is snc
is a regular subscheme of $X$ of codimension $|J|$.

\begin{remark}
    The dimension of $\Sigma(D)$ is bounded above by the dimension of $X$.
\end{remark}

% The dual complex $\Delta(D)$ of a simple normal crossings divisor $D$
% is a $\Delta$-complex whose vertices $I$ and simplices given by
% irreducible components of intersections $\cap_{j \in J} D_j$ for
% subsets $J \subseteq I$.  These irreducible components are the strata
% of the toroidal stratification of $X$ associated to the toroidal
% embedding $U \hookrightarrow X$, where $U = X \smallsetminus D$.  The
% simplex corresponding to a stratum $W$ has dimension $\codim W -1$ and
% is a face of the simplex corresponding to another stratum $Y$ whenever
% $Y \subseteq W$.  As $D$ is a simple normal crossings divisor,
% $\Delta(D)$ has dimension less than $d$.  Note that the usual fan
% associated to the toroidal structure on $X$, as in \cite{KKMS}, is the
% cone over $\Delta(D)$.

An important fact relating the geometry of the pair $(X,D)$ to the
combinatorics of the dual complex $\Sigma(D)$ is the following.

\begin{lemma}\label{lem:blowup}
    Let $\{D_i\}_{i\in I}$ be a presentation of a snc divisor $D$ in
    a regular noetherian scheme $X$. Let $\sigma$ be an $i$-simplex of
    $\Sigma(D)$ and let $f\colon\mathrm{Bl}_{D_\sigma}X\rightarrow X$ be the
    blowup of $X$ along~$D_\sigma$. We let
    \begin{itemize}
        \item   $\til D=f^{-1}(D)$ denote the inverse image of $D$,
        \item   $\til D_i=f^{\mathrm{st}}(D_i)$ denote the strict transform of $D_i$ for $i\in I$,
        \item    and $E$ denote the exceptional divisor of $f$.
    \end{itemize}
    The naive dual complex $\Sigma(\til D)$ with respect to the presentation $\{E\}\sqcup\{\til D_i\}_{i \in I}$ of
    $\til D$ is naturally isomorphic to $\Sigma(D) \star \sigma$, where the
    new vertex $e_\sigma$ corresponds to $E$.
\end{lemma}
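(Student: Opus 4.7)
The plan is to verify the combinatorial identity $\Sigma(\tilde D)\iso\Sigma(D)\star\sigma$ under the identification $E\leftrightarrow e_\sigma$. Since membership of a subset $\nu\subseteq\{E\}\sqcup I$ in $\Sigma(\tilde D)$ amounts to non-emptiness of a finite intersection of divisors, this is a local question on $\tilde X$, and I work in an \'etale (or Zariski) neighborhood of a point $p\in X$.

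At such a point $p$, let $\tau\in\Sigma(D)$ be the set of $j\in I$ with $p\in D_j$. Because $D$ is snc, I may choose a regular system of parameters at $p$ extending $(x_j)_{j\in\tau}$ so that $D_j=V(x_j)$ for $j\in\tau$. If $\sigma\not\subseteq\tau$, then $p\notin D_\sigma$ and $f$ is an isomorphism near $p$, so nothing changes. Otherwise the blowup of $V(x_j:j\in\sigma)$ is covered by the standard $|\sigma|$ affine charts indexed by $i\in\sigma$; in the $i$-th chart the coordinates are $x_i$, $y_j=x_j/x_i$ for $j\in\sigma\setminus\{i\}$, and $x_j$ for $j\in\tau\setminus\sigma$, and one reads off
\[
E=V(x_i),\qquad \tilde D_j=V(y_j)\ \text{for } j\in\sigma\setminus\{i\},\qquad \tilde D_j=V(x_j)\ \text{for } j\in\tau\setminus\sigma,
\]
while $\tilde D_i$ does not meet this chart (it has been absorbed into $E$).

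From this description I read off the simplices of $\Sigma(\tilde D)$. For $\nu\subseteq I$, the isomorphism $f$ away from $E$ gives $\bigcap_{j\in\nu}\tilde D_j\setminus E\iso D_\nu\setminus D_\sigma$. If $\sigma\not\subseteq\nu$ and $\nu\in\Sigma(D)$, then for any $k\in\sigma\setminus\nu$ the snc codimension count yields $D_\nu\not\subseteq D_k\supseteq D_\sigma$, so this difference is non-empty. If instead $\sigma\subseteq\nu$, the intersection is empty both off $E$ (since $D_\nu\subseteq D_\sigma$) and on $E$ (in each chart covering $E$ some $\tilde D_i$ with $i\in\sigma\subseteq\nu$ is missing). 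Hence $\nu\in\Sigma(\tilde D)$ iff $\nu\in\Sigma(D)$ and $\sigma\not\subseteq\nu$. Similarly, reading the chart description at a point of $D_{\sigma\cup\nu}$ shows that $\{E\}\cup\nu\in\Sigma(\tilde D)$ iff $\sigma\cup\nu\in\Sigma(D)$ and some $i\in\sigma\setminus\nu$ exists, i.e.\ iff $\sigma\cup\nu\in\Sigma(D)$ and $\sigma\not\subseteq\nu$.

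Finally, I would match these two families with the two pieces of $\Sigma(D)\star\sigma$ under $E\leftrightarrow e_\sigma$. The simplices not involving $E$ match directly with $\{\tau\in\Sigma(D):\sigma\not\subseteq\tau\}$. For the remaining simplices, the assignment $(\tau,J)\mapsto\tau\setminus J$, defined on pairs with $\emptyset\neq J\subseteq\sigma\subseteq\tau\in\Sigma(D)$, is a bijection onto the set of $\nu\subseteq I$ with $\sigma\cup\nu\in\Sigma(D)$ and $\sigma\not\subseteq\nu$, with inverse $\nu\mapsto(\sigma\cup\nu,\sigma\setminus\nu)$. The main delicate point is the snc codimension argument giving $D_\nu\not\subseteq D_\sigma$ whenever $\sigma\not\subseteq\nu$; everything else reduces to the standard local model of the blowup along a regular subscheme together with routine set-theoretic bookkeeping.
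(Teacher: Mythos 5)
Your proof is correct and follows essentially the same route as the paper's: reduce to the standard local chart description of the blowup near $D_\sigma$, verify that $\tilde D_J$ (respectively $\tilde D_J\cap E$) is nonempty exactly when $J\in\Sigma(D)$ with $\sigma\not\subseteq J$ (respectively $\sigma\cup J\in\Sigma(D)$ with $\sigma\not\subseteq J$), and match these sets with the two constituent families in the definition of $\Sigma(D)\star\sigma$. Your explicit bijection $(\tau,J)\mapsto\tau\setminus J$ with inverse $\nu\mapsto(\sigma\cup\nu,\sigma\setminus\nu)$ is a clean packaging of the combinatorial check, and the snc codimension argument for $D_\nu\not\subseteq D_\sigma$ when $\sigma\not\subseteq\nu$ is the same step the paper uses (somewhat more tersely) in its treatment of both incidence conditions.
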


Note that an analogous statement is made in
\cite{CoxLittleSchenck}*{Proposition~3.3.15} in the case of the usual
dual complex.

\begin{proof}
We use the natural bijection of vertex sets
$\{\Sigma(D)\star\sigma\}_0\iso\Sigma(\til D)_0$ which is equality on
$I=\Sigma(D)_0$ and sends $e_\sigma$ to $E$.  To prove the lemma, we
need to show that the incidence of the divisors $E$ and $\til D_i$
satisfy the same incidence relations as the $0$-simplices of the
complex $\Sigma(D) \star \sigma$, namely that
    \begin{enumerate}
        \item for $J \subseteq I$, $\til D_J \neq \emptyset$ if and only if $D_J
            \neq \emptyset$ and $\sigma \not\subseteq J$;
        \item for $J = J' \sqcup \{e_\sigma\}$ where $J' \subset I$, $\til D_J
            \neq \emptyset$ if and only if $D_{J' \sqcup J''} \neq \emptyset$ for
            some $J'' \subseteq \sigma \subseteq J' \cup J''$ with $\emptyset \neq
            J'' \subseteq I \setminus J'$.
    \end{enumerate}
    Geometrically, the first condition says that $\cap_{i\in J}\til D_i$ will
    be nonempty if and only if $\cap_{i\in J}D_i$ is nonempty and is not
    contained in $D_\sigma$. The only nontrivial part
    of the statement then is the assertion that if the intersection $D_J$ is
    nonempty and is contained in $D_\sigma$, then $\til D_J$ is empty. To see
    this, note that $\til D_J \subset \til D_\sigma$, so it suffices to assume
    that $J = \sigma$. Looking \'etale locally near $D_\sigma$, we can replace
    $X$ with $\Spec R$ for a regular local ring $R$, and where $D_i, i \in
    \sigma$ is cut out by $x_1, \ldots x_m$, which form part of a regular
    system of parameters. The blowup is then given as the relative proj of the
    graded ring $R[t_1, \ldots, t_m]/(x_i t_j - x_j t_i)$ for pairs
    of indices 
    $i \neq j$, with
    the strict transform $\til D_i$ cut out by the ideal $(t_i, x_i)$. It
    follows that the stratum $\til D_\sigma$ is defined by an ideal which
    contains the irrelevant ideal, and is therefore empty.

    The content of the second part is the statement that a stratum $\til
    D_{J'}$, where $J' \not\subseteq \sigma$ (this condition is ensured by $J''
    \subset \sigma \setminus J'$) should have a nonempty intersection with the
    exceptional divisor $E$ exactly when the stratum $D_{J'}$ has a nontrivial
    intersection with a stratum $D_{J''}$ with mutual intersection $D_{J' \sqcup
    J''}$ contained in the blowup locus $D_\sigma$.

    Suppose that $J'$ and $J''$ are chosen as above. As $\sigma \not\subseteq J'$
    (since the elements of $J''$ are contained in $\sigma$ but not in $J'$),
    $D_{J'} \not\subseteq D_\sigma$. On the other hand $D_\sigma \cap D_{J'}
    \supseteq D_{J''} \cap D_{J'} = D_{J'' \sqcup J'} \neq \emptyset$, and hence
    $D_J'$ nontrivially intersects the locus $D_\sigma$ which is being blown
    up. But therefore $\til D_{J'}$, which may be identified with the strict
    transform of $D_{J'}$ nontrivially intersects $E$, as desired.

    In the other direction, if for some $J' \not\subseteq \sigma$, all such
    intersections were trivial, then it would follow that $D_{J'\sqcup \sigma}$
    is also trivial, showing that $D_{J'}$ is disjoint from
    $D_\sigma$. But in this case, it is easy to see that $D_{J'}$ will be disjoint from $E$ as claimed.
\end{proof}

Now, we can use Lemma~\ref{lem:blowup} to prove Lemma~\ref{colouring lemma}.

\begin{proof}[Proof of Lemma~\ref{colouring lemma}]
    Consider the proper birational morphism $f : X' \to X$ obtained by
    sequentially blowing up the strata of $D$, first blowing up the
    $0$-dimensional strata of $D$, then the strict transforms of the
    $1$-dimensional strata of $D$, then the strict transforms of the
    $2$-dimensional strata of $D$, etc.  The inverse image $f^{-1}(D)$ is a strict
    normal crossings divisor in $X'$ and, by Lemma~\ref{lem:blowup}, its
    naive dual complex is the barycentric subdivision of $\Sigma(D)$.
    Given its interpretation in Lemma~\ref{lem:baryorder} as the order complex of $\Sigma(D)$, the
    vertices of the barycentric subdivision can be colored with at most $d$
    colors ``by dimension'', with the subset of vertices corresponding to
    elements in $\Sigma(D)_i$ having color $i$.  This coloring has the
    property that no two distinct vertices of the same color are both contained in
    a simplex; equivalently, the correspondingly colored components of
    $f^{-1}(D)$ are disjoint in $X'$.  Hence the union of all irreducible
    components of the same color is a regular divisor.
    % is a balanced complex of dimension less than $d$ (see, e.g.,
    % \cite{barycentric}), hence its 1-skeleton is $d$-colorable.
    % If we choose such a coloring, then the irreducible components
    % corresponding to vertices of the same color are disjoint and hence
    % their union is regular.
    Therefore, we can express $f^{-1}(D)$ as the union of at most $d$
    regular (but not necessarily connected) divisors, as required.  If
    need be, we can further blow up smooth points to get a union of
    exactly $d$ regular divisors.
\end{proof}

\subsection{Construction of rational functions for splitting ramification}

\begin{notation}
  Let
  $\{V_i \}_{i \in I}$
  be a family of cycles on $X$. Given a subset $J \subset I$, let
  $V_J$ denote the naive intersection cycle, defined as follows. Given
  two integral subschemes $A$ and $B$ of $X$, the naive intersection cycle
  is $(A\cap B)_{\text{\rm red}}$, written as a sum of its irreducible
  components. Given two cycles $\sum a_iW_i$ and $\sum b_jW_j$, the
  naive intersection is the union of the naive intersections $W_i\cap
  W_j$. (This is purely a way of measuring dimensions of support as a
  notational convenience,
  nothing else. We thus ignore coefficients and intersection multiplicities.)
\end{notation}

\begin{definition}
  A collection of irreducible subschemes $\{W_i\}_{i \in I}$ of $X$
  \df{intersects properly} if for every subset $J \subset I$ we have
  $\codim W_J \geq \sum_{i \in J} \codim W_i$ (using the convention
  that $\codim \emptyset = \infty$). A collection of cycles
  $\{V_i\}_{i \in I}$ on $X$ intersect properly if any collection
  $\{W_i\}_{i\in I}$, where $W_i$ is an irreducible component of the
  support of $V_i$ for all $i \in I$, intersects properly.
\end{definition}

\begin{lemma}
    Let $X$ be a scheme.
  Suppose that $\{W_i\}_{i \in I}$ is a collection of cycles of $X$ that
  intersect properly. If $W \subset X$ is an irreducible
  subscheme such that, for every subset $J \subset I$, the scheme $W$
  intersects each irreducible component of $W_J$ properly, then
  $\{W_i\} \cup \{W\}$ intersects properly.
\end{lemma}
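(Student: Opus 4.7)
The plan is to verify the definition of \emph{intersects properly} for the enlarged collection $\{W_i\}_{i\in I}\cup\{W\}$ directly.  Given a subset $J\subseteq I\cup\{*\}$, where $*$ indexes the new element $W$, and a choice of irreducible components $V_i$ of each $W_i$ with $i\in J\cap I$, the requirement is that the naive intersection of the $V_i$ (together with $W$, if $*\in J$) has codimension at least the sum of the codimensions.  When $*\notin J$ this is immediate from the hypothesis that $\{W_i\}_{i\in I}$ intersects properly, so the substantive case is $J=J'\cup\{*\}$ with $J'\subseteq I$, where I need to show
\[
\codim\bigl(\bigcap_{i\in J'} V_i \,\cap\, W\bigr)\geq \sum_{i\in J'}\codim V_i + \codim W.
\]

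I would reduce this to a statement about each irreducible component separately.  Let $Z_0$ be an irreducible component of $\bigcap_{i\in J'}V_i$.  Since $\{W_i\}_{i\in I}$ intersects properly as a collection of cycles, the chosen tuple $(V_i)_{i\in J'}$ of irreducible components intersects properly as well, giving $\codim Z_0\geq\sum_{i\in J'}\codim V_i$.  On the other hand, by construction $Z_0$ is one of the irreducible subvarieties recorded in the naive intersection cycle $W_{J'}$, since that cycle is assembled as the union of the naive intersections $\bigcap V_{i,k'_i}$ over all choices of components.  Applying the hypothesis to $Z_0$ yields $\codim(Z_0\cap W)\geq \codim Z_0+\codim W$, and chaining the two inequalities produces $\codim(Z_0\cap W)\geq \sum_{i\in J'}\codim V_i+\codim W$.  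Taking the minimum over all components $Z_0$ of $\bigcap_{i\in J'}V_i$ then gives the desired bound on the full naive intersection.

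The only delicate point, and the step I would take care to state precisely, is the identification of every irreducible component $Z_0$ coming from a specific choice of components as an ``irreducible component of $W_{J'}$'' in the sense of the hypothesis.  This is a consequence of the definition of the naive intersection cycle given at the start of the subsection: $W_{J'}$ is built by unioning the naive intersections over every choice of components and then listing all resulting irreducible subvarieties, so every such $Z_0$ is in fact present.  Once this bookkeeping is in place, the argument above is essentially formal and requires no further geometric input.
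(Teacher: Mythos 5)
The overall shape of your argument is right, and you correctly isolate the delicate point: that a component $Z_0$ of $\bigcap_{i\in J'}V_i$ for a \emph{specific} choice of components $V_i$ should itself be an irreducible component of the naive intersection cycle $W_{J'}$, so that the hypothesis on $W$ applies to it.  However, your justification of this step is too quick, and under the most literal reading of the paper's definition it can actually fail.  The cycle $W_{J'}$ is assembled from the naive intersections over all choices $(V'_i)$, and $Z_0$ may be strictly contained in a component $Z_1$ of $\bigcap_{i\in J'}V'_i$ for a \emph{different} choice $(V'_i)$; if $W_{J'}$ records only the maximal irreducible subsets of the union, then $Z_0$ is not a component of $W_{J'}$ and the hypothesis gives no control on $W\cap Z_0$.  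Chaining through $Z_1$ does not rescue the argument: one gets $\codim(Z_0\cap W)\geq\codim Z_1+\codim W$, but if $Z_0\subsetneq Z_1$ then $\codim Z_1<\codim Z_0$, and in a regular ambient scheme proper intersection forces $\codim Z_0=\sum_{i\in J'}\codim V_i$, so the bound through $Z_1$ is strictly weaker than the one required.  One can arrange exactly this configuration with a cycle $W_i$ having components of unequal codimension, so the argument as written has a real gap in that generality.

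What saves the day in the paper's intended application (Lemma~\ref{proper_lemma}) is that the cycles in play are all of pure codimension $1$.  If each $W_i$ has components of a single codimension $c_i$, then proper intersection forces every component arising from any choice to have codimension exactly $\sum_{i\in J'}c_i$, so no strict containment $Z_0\subsetneq Z_1$ between such components can occur and $Z_0$ is automatically maximal, i.e., genuinely a component of $W_{J'}$.  Alternatively, one can read the definition of the naive intersection cycle as the formal sum of \emph{all} irreducible components of all $\bigcap_{i\in J'}V_i$ (without discarding those contained in larger ones), under which your step is correct by fiat.  Either way, you should make this convention explicit, since the proof hinges on it; as stated, ``$Z_0$ is in fact present'' needs an argument, not just an appeal to how $W_{J'}$ is assembled.
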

\begin{proof}
  We omit the proof.
\end{proof}

Now, we prove a lemma which is a direct generalization of the lemma in the
correction to~\cite{saltman-padic}, and which can be viewed as a
version of Kawamata's trick, see \cite{abramovich_karu}*{\S5.3}.

\begin{lemma}
  \label{proper_lemma}
  Let $X$ be a regular scheme admitting an ample
  invertible sheaf and let $D_1, \ldots, D_d$ be a collection of
  regular divisors on $X$ whose union is a strict normal crossings
  divisor. Let
  \[ \til D_i = \sum_{j = 1}^d m_{i,j} D_j, \ \ i = 1, \dotsc, n\]
  be a collection of integer linear combinations of the $D_j$. In this case,
  for $i=1, \dotsc, n$, there exist rational functions $f_i$ and
  divisors $E_i$ on $X$ such that
  \begin{enumerate}
  \item $\operatorname{div}(f_i) = \til D_i + E_i$, and

  \item the collection $\{E_i\}_{i=1,\dotsc,n} \cup \{D_j\}_{j=1,\dotsc,d}$ intersects properly.
  \end{enumerate}
\end{lemma}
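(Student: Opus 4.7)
The plan is to apply a version of Kawamata's trick: using the ample invertible sheaf on $X$, I would write each $\til D_i$ as a difference of effective divisors $A_i - B_i$ chosen in sufficiently general position with respect to the $D_j$ and the previously constructed auxiliary divisors, and then take $f_i$ to be a rational function realizing the resulting linear equivalence $A_i - B_i \sim \til D_i$.

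First I would fix an ample invertible sheaf $L$ on $X$ and, for each $i=1,\dotsc,n$, pick $N_i \gg 0$ so that both $L^{\otimes N_i}$ and $L^{\otimes N_i}\otimes \Oscr_X(\til D_i)$ are very ample. This is possible because $\Oscr_X(\til D_i)$ is a fixed line bundle and $L$ is ample. Given effective divisors $A_i \in |L^{\otimes N_i}(\til D_i)|$ and $B_i \in |L^{\otimes N_i}|$, the isomorphism $\Oscr_X(A_i - B_i) \iso \Oscr_X(\til D_i)$ produces a rational function $f_i$ with $\operatorname{div}(f_i) = \til D_i + (B_i - A_i)$, and I would set $E_i := B_i - A_i$.

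The technical heart of the argument is a Bertini-type moving lemma: given a very ample line bundle $M$ on $X$ and a finite collection $\{W_\alpha\}$ of irreducible closed subschemes, after possibly replacing $M$ by a sufficiently large power there exists an effective divisor in $|M|$ whose irreducible components each meet every $W_\alpha$ properly. This is precisely the kind of statement underlying the correction to \cite{saltman-padic}. Granting this lemma, I would construct $f_i$ and $E_i$ by induction on $i$: at step $i$, let $\mathcal{W}_i$ be the finite collection of irreducible components of all naive intersections among the $D_j$'s and the previously built $E_k$'s ($k<i$), apply the moving lemma (with $L^{\otimes N_i}$ and $L^{\otimes N_i}(\til D_i)$, raising $N_i$ further if necessary) to choose $A_i$ and $B_i$ whose components each meet every element of $\mathcal{W}_i$ properly, and then define $E_i=B_i-A_i$. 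The previous lemma (adding a cycle that intersects every stratum of a properly intersecting family properly to that family) then ensures $\{E_1,\dotsc,E_i\}\cup\{D_1,\dotsc,D_d\}$ remains properly intersecting.

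The main obstacle is setting up the moving lemma in the stated generality. Over an infinite field one may simply take a generic section of $M$; but $X$ lives over an arbitrary (possibly finite) base, so one must justify that a high enough power $M^{\otimes m}$ of a very ample line bundle has enough global sections to cut out a divisor avoiding each of the finitely many generic points one needs to miss, and whose restriction to each $W_\alpha$ is nonzero of the right codimension. Once this input is available, the induction above is essentially formal.
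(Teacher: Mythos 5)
Your proposal is correct in outline but takes a genuinely different route from the paper, and it has an unresolved technical gap that you yourself flag. The paper's proof is also inductive (and the authors also describe it as a version of Kawamata's trick), but at each stage it does not work with linear systems at all. Instead it fixes a finite set $P$ of closed points, one in each irreducible component of every stratum $D_I \cap E_J$ built so far, and forms the semilocal ring $R$ of $X$ at $P$; since finitely generated projective modules over a semilocal ring are free, each $D_j$ becomes principal on $\Spec R$, cut out by some $x_j \in R$. Setting $f_r = \prod_j x_j^{m_{r,j}}$ gives $\operatorname{div}(f_r) = \til D_r + E_r$, and the crucial feature is automatic: since the $x_j$ are units away from $D_j$ inside $\Spec R$, the ``extra'' divisor $E_r$ misses every point of $P$, hence contains no component of any of the strata one needed to avoid. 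Proper intersection then follows because $E_r$ is a divisor not containing any irreducible component of any previously built stratum. The only global input is that the points of $P$ can be put inside one quasi-affine open, which is where the ample line bundle and graded prime avoidance enter.

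Compared with this, your construction via $A_i \in |L^{\otimes N_i}(\til D_i)|$ and $B_i \in |L^{\otimes N_i}|$ buys you a very concrete description of the $E_i$ as differences of effective very ample divisors, but it commits you to a Bertini-type statement over an arbitrary excellent base (possibly with finite residue fields). You correctly identify this as the main obstacle, but as written it is a genuine gap: over a finite field a generic member of a linear system need not exist, and showing that for $N \gg 0$ one can find a member of $|L^{\otimes N}|$ not containing any of a prescribed finite set of generic points requires some form of prime avoidance (graded prime avoidance suffices, or Poonen-type Bertini). The paper sidesteps this entirely; the semilocal-ring maneuver produces the needed avoidance by construction, with no appeal to Bertini or to genericity of sections, and so works uniformly over any base. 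If you want to salvage your version, the missing moving lemma can be proved by graded prime avoidance applied to $\bigoplus_N H^0(X, L^{\otimes N})$ and the homogeneous primes corresponding to the generic points to be avoided, but note that the paper already uses that lemma for a milder purpose and then gets the rest ``for free'' via the semilocal ring.
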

\begin{proof}
  We construct the $f_i$ inductively.

\smallskip

  \noindent
  \textbf{Base case $i = 1$}. Let $P$ be a (scheme-theoretic) disjoint
  union of closed points, with one closed point in each irreducible
  component of $D_I$ for each subset $I \subset \{1,\dotsc,d\}$.  Let
  $R$ be the semilocal ring of the points of $P$ (which exists since
  we can put $P$ in a single quasi-affine open of $X$, using the ample
  invertible sheaf \cite{MR0217084}*{Th\'eor\`eme 4.5.2}
  % \cite[\href{http://stacks.math.columbia.edu/tag/01PR}{Tag
  %   01PR}]{stacks-project}
  and the graded prime-avoidance lemma \cite{eisenbud}*{Section~3.2}).
  By
  \cite{bourbaki:commutative_algebra1-4}*{Chapitre~II,~Section~5.4,~Proposition~5},
  since finitely generated projective modules over $R$ are free, it
  follows that each $D_i$ is principal on $R$. In particular, we may
  write $D_i$ as the zero locus of a function $x_i \in R$. We then
  have, upon setting $f_1 = \prod x_j^{m_{1, j}}$, that $(f_1) = \til
  D_1 + E_1$ and the support of $E_1$ contains none of the strata
  $D_I$. In particular, since $E_1$ is a divisor, the codimension of
  $E_1 \cap D_I$ in $D_I$ is at least $1$ as desired.

\smallskip

  \noindent
  \textbf{Induction step}. Suppose we have previously defined $f_1,
  \ldots, f_{r-1}$ so that $\operatorname{div}(f_i) = \til D_i + E_i$
  with $\{E_i\}_{i = 1, \dotsc, r - 1}\cup\{D_j\}_{j=1,\dotsc,d}$
  intersecting properly.  For every pair of subsets $I \subset \{1,
  \ldots, d\}$ and $J \subset \{1, \ldots, r -1\}$, consider the
  intersection $D_I \cap E_J$ and let $P$ be a scheme theoretic union
  consisting of at least one closed point from each irreducible
  component of each of these nonempty intersections for every $I, J$
  as above. Let $R$ be the semilocal ring at $P$. As above, we may
  write $D_i$ as the zero locus of some function $x_i \in R$ on $\Spec
  R \subset X$, and considering the $x_i$ as rational functions on
  $X$, we set $f_r = \prod x_j^{m_{r, j}}$. It follows that $(f_r) =
  \til D_r + E_r$ where the support of $E_r$ contains no irreducible
  component of the strata $D_I \cap E_J$, with $J \subset \{1, \ldots,
  r - 1\}$. Therefore $\{E_i\}_{i = 1, \ldots,
  r}\cup\{D_j\}_{j=1,\dotsc,d}$ intersects properly as desired.
\end{proof}

\begin{notation}
  Let $T \in \Mat_{n,d}(\ZZ)$ be an $n \times d$ matrix. For subsets $I
  \subset \{1, \ldots n\}$ and $J \subset \{1, \ldots, d\}$, let
  $T_{I,J}$ denote the $|I|\times|J|$-submatrix of $T$ with rows
  corresponding to the elements of $I$ and columns corresponding to the
  elements of $J$.
\end{notation}

\begin{definition}
  Let $\ell$ be a prime and $n, d$ be positive integers with $n \geq
  d$. An $n \times d$ matrix $T \in \Mat_{n,d}(\ZZ)$ is
  \df{$\ell$-Pirutka} if for all nonempty subsets $I \subset \{1,
  \ldots, n\}$ and $J \subset \{1, \ldots, d\}$, with $|I| - |J| = n -
  d$, the submatrix $T_{I, J}$ has (maximal) rank $|J|$ modulo $\ell$.
\end{definition}

\begin{lemma}
  \label{dantarded}
  Let $R$ be a regular local ring with
  fraction field $F$ and $\alpha \in\Hoh^t(F, \m_\ell^{\otimes t})$ where
  $\ell$ is invertible in $R$. Let
  $x_1, \ldots, x_n \in R$ be a regular system of parameters and suppose
  that $\alpha = (u_1, \ldots, u_{t - h}, x_1, \ldots, x_h)$ with
  $u_i$ units in $R$. If $L = F(\sqrt[\ell]{x_1}, \ldots,
  \sqrt[\ell]{x_h})$ and $S$ is the integral closure of $R$ in $L$,
  then
  \begin{enumerate}
  \item $S$ is a regular local ring with maximal ideal generated by
    $\sqrt[\ell]{x_1}, \ldots, \sqrt[\ell]{x_h}, x_{h+1}, \ldots, x_n$,
  \item the class $\alpha_L$ has trivial residue at each codimension one prime
    of $S$.
  \end{enumerate}
\end{lemma}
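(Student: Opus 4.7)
The plan is to establish (1) by induction on $h$ via a tower of pure $\ell$-th root extensions, and then observe that (2) follows immediately because $\alpha$ restricts to zero in $\Hoh^t(L, \m_\ell^{\otimes t})$.

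For (1), I set $R_0 = R$, $T_i = \sqrt[\ell]{x_i}$, and inductively $R_i = R_{i-1}[T_i]/(T_i^\ell - x_i)$, and show by induction that $R_i$ is a regular local ring with fraction field $F_i = F(T_1, \ldots, T_i)$ and maximal ideal generated by $T_1, \ldots, T_i, x_{i+1}, \ldots, x_n$. At the inductive step, $x_i$ is part of a regular system of parameters of $R_{i-1}$ and hence generates a height-one prime, so the localization $(R_{i-1})_{(x_i)}$ is a DVR with uniformizer $x_i$. The polynomial $T_i^\ell - x_i$ is then Eisenstein over this DVR, hence irreducible over $F_{i-1}$; Gauss's lemma applied to the UFD $R_{i-1}$ shows $(T_i^\ell - x_i)$ is prime in $R_{i-1}[T_i]$, so $R_i$ is a domain with fraction field $F_i$. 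Reducing modulo $\mathfrak{m}_{i-1}$ yields $R_i/\mathfrak{m}_{i-1} R_i \cong (R_{i-1}/\mathfrak{m}_{i-1})[T_i]/(T_i^\ell)$, which is local, and integrality of the extension forces every maximal ideal of $R_i$ to lie over $\mathfrak{m}_{i-1}$, making $R_i$ local. Its maximal ideal is generated by $T_1, \ldots, T_i, x_i, x_{i+1}, \ldots, x_n$, and the relation $x_i = T_i^\ell$ lets us drop $x_i$, leaving $n$ generators; since $\dim R_i = \dim R_{i-1} = n$ as the extension is finite, $R_i$ is regular of dimension $n$. After $h$ steps, $R_h$ is regular (hence normal), module-finite over $R$, and has fraction field $L$, so $R_h = S$.

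For (2), the point is that $\alpha_L = 0$. Each $x_i$ becomes an $\ell$-th power in $L$, namely $x_i = T_i^\ell$, so the symbol $(x_i) \in \Hoh^1(L, \m_\ell)$ vanishes. The class $\alpha_L = (u_1)_L \cup \cdots \cup (u_{t-h})_L \cup (x_1)_L \cup \cdots \cup (x_h)_L$ is therefore zero in $\Hoh^t(L, \m_\ell^{\otimes t})$, and zero has trivial residue at every codimension one prime of $S$.

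The one step that feels delicate a priori is the irreducibility of $T_i^\ell - x_i$ over $F_{i-1}$ as we climb the tower, but this is dispatched uniformly by the Eisenstein criterion at the DVR $(R_{i-1})_{(x_i)}$: the key point is that at each stage $x_i$ remains part of the regular system of parameters of $R_{i-1}$, so it still generates a height-one prime. Once this and the resulting regularity of each $R_i$ are in hand, the rest is straightforward bookkeeping, and (2) is immediate.
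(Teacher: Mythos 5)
Your argument is correct, and for part (2) it takes a simpler route than the paper's. The paper omits its proof of (1), merely asserting that $S = R[z_1,\dotsc,z_h]/(z_1^\ell-x_1,\dotsc,z_h^\ell-x_h)$ is regular local with the stated maximal ideal; your inductive Eisenstein-tower argument supplies exactly those details in the standard way (the key points — that $x_i$ remains a regular parameter of $R_{i-1}$ so that $(R_{i-1})_{(x_i)}$ is a DVR, and that Gauss plus lying-over gives a local domain of the right dimension — are all handled correctly). For (2), the paper invokes the compatibility of residue maps under extensions of discrete valuation rings: the residue at a prime $\mathfrak Q\subset S$ lying over $\mathfrak P\subset R$ with ramification index $e$ equals $e$ times the restriction of the residue at $\mathfrak P$, so $\alpha_L$ can only ramify at the $(z_i)$, where $e=\ell$ annihilates the $\ell$-torsion residues of $\alpha$. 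You instead notice that $\alpha_L$ vanishes outright, since the cup factor $(x_1)_L$ dies once $x_1$ becomes an $\ell$-th power in $L$. This is cleaner and proves more; the paper's method is the more robust ``tame restriction multiplies residues by $e$'' functoriality, which applies equally to classes that do not die after the extension. One cosmetic caveat: your deduction $\alpha_L=0$ implicitly assumes $h\geq 1$. If $h=0$ (so $L=F$ and $S=R$), then $\alpha$ is a cup product of unit classes and need not vanish, but it is still unramified over $R$ because it extends to $\Hoh^t(R,\m_\ell^{\otimes t})$; this degenerate case never arises in the paper's applications, so it merits at most a sentence.
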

\begin{proof}
  We omit the proof that $S =
  R[z_1,\dotsc,z_h]/(z_1^\ell-x_1,\dotsc,z_h^\ell-x_h)$ and is regular
  with maximal ideal $\mathfrak m = (z_1, \dotsc, z_h,x_{h+1},\dotsc,x_n)$.

  \newcommand{\res}{\operatorname{res}}

  Now, for a prime $\mathfrak P \subset R$ of height one and a prime
  $\mathfrak Q \subset S$ lying over it with ramification index $e$, we
  have a commutative diagram
  \[\xymatrix{
      \Hoh^t(F, \m_\ell^{\otimes t}) \ar[d] \ar[r] &
      \Hoh^{t-1}(\mathrm{Frac}({R/\mathfrak P}),
      \m_\ell^{\otimes t-1}) \ar[d]_{e \res} \\
      \Hoh^t(L, \m_\ell^{\otimes t}) \ar[r] & \Hoh^{t-1}(\mathrm{Frac}({S/\mathfrak
      Q}),
      \m_\ell^{\otimes t-1})
    }\]
  of residue maps, which shows that $\alpha_L$ only ramifies at primes lying over the
  ramification locus of $\alpha$. In particular, $\alpha_L$ can only
  ramify over the primes $(z_i)$ for $i = 1, \ldots, h$, which each have
  ramification index $\ell$ over $(x_i)$. Since all residues of $\alpha$
  are $\ell$-torsion, it follows from the diagram above that $\alpha_L$
  is unramified.
\end{proof}

\begin{lemma}
  \label{main lemma}
  Let $X$ be a regular
  scheme of dimension $d$ admitting an ample invertible sheaf and $D_1,\dotsc,D_d$ be a
  collection of regular divisors of $X$ whose union is snc. Let $\ell$ be a
  prime invertible on $X$ and let $\alpha
  \in\Hoh^2(F, \m_\ell^{\tensor 2})$ be a class ramified only along the
  union of the $D_i$.  Let $T = (m_{ij})$ be an $\ell$-Pirutka $n \times
  d$ matrix.  For each $i=1,\dotsc,n$, let $\til D_i = \sum_{j=1}^d
  m_{ij} D_j$ and let $f_i$ and $E_i$ be as in Lemma~\ref{proper_lemma}.
  If $L = F(\sqrt[\ell]{f_1}, \ldots, \sqrt[\ell]{f_n})$, then
  $\alpha_L$ is unramified.
\end{lemma}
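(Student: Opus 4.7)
The plan is to verify the hypothesis of Lemma~\ref{lem:unramified_criterion}: for each point $x\in X$, produce a regular local ring $S_x\subseteq L$, integral over $\Oscr_{X,x}$, such that $\alpha_L$ is unramified over $S_x$. This localizes the problem at each point of $X$.

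Fix $x\in X$ and let $I=\{j:x\in D_j\}$, with local equations $x_j\in\Oscr_{X,x}$ for the $D_j$ ($j\in I$) forming a partial regular system of parameters. By Lemma~\ref{nicely_ramified}, on $\Oscr_{X,x}$ the class $\alpha$ takes the form $\alpha_0+\sum_{j\in I}(u_j,x_j)+\sum_{i<j\text{ in }I}m_{ij}(x_i,x_j)$ with $\alpha_0$ unramified. Reading the identity $\operatorname{div}(f_k)=\sum_j T_{kj}D_j+E_k$ at $\Oscr_{X,x}$ yields a factorization $f_k=v_k\prod_{j\in I}x_j^{T_{kj}}\prod_s e_s^{\mu_{ks}}$ with $v_k\in\Oscr_{X,x}^\times$, where the $e_s$ are local equations for the irreducible components of the $E_k$'s that pass through $x$. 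Proper intersection (Lemma~\ref{proper_lemma}) gives two crucial facts: no two distinct $E_k$'s share an irreducible component, so each index $s$ belongs to a unique $E_{k(s)}$; and the set $K=\{k:x\in E_k\}$ satisfies $|K|\leq d-|I|$, for otherwise $D_I\cap\bigcap_{k\in K}E_k$ would have codimension exceeding $d$ while containing $x$.

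The heart of the proof is a linear-algebra step exploiting the $\ell$-Pirutka hypothesis. Define $K_0=\{k(s):\mu_{k(s),s}\not\equiv 0\pmod\ell\}\subseteq K$, so $|K_0|\leq d-|I|$. For each $j\in I$, we look for integers $a_k$ vanishing on $K_0$ with $\sum_k a_k T_{kj'}\equiv\delta_{jj'}\pmod\ell$ for all $j'\in I$. Since $|K_0^c|\geq n-d+|I|$, the $\ell$-Pirutka condition applied to any submatrix $T_{I',I}$ with $I'\subseteq K_0^c$ and $|I'|=n-d+|I|$ forces full column rank $|I|$ modulo $\ell$, so each standard basis vector of $(\ZZ/\ell)^I$ lies in the row span of $T_{K_0^c,I}$ and the desired $a_k$ exist. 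Setting $g_j=\prod_k f_k^{a_k}$, the support condition on $a_k$ together with the disjointness of the $E_k$-components forces every $e_s$-exponent of $g_j$ to vanish modulo $\ell$, while the $x_{j'}$-exponent equals $\delta_{jj'}$ modulo $\ell$. Therefore $g_j\equiv u_j'\cdot x_j\pmod{(F^\times)^\ell}$ for some $u_j'\in\Oscr_{X,x}^\times$, and hence $F(\sqrt[\ell]{u_j'x_j})=F(\sqrt[\ell]{g_j})\subseteq L$.

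Finally, set $y_j=u_j'x_j$, still a local equation for $D_j$, so that $\{y_j\}_{j\in I}$ remains a partial regular system of parameters. By Lemma~\ref{dantarded} (whose argument applies to any class all of whose residues at the $(x_j)$ are $\ell$-torsion, not only to single symbols), the integral closure $S_x$ of $\Oscr_{X,x}$ in $F(\sqrt[\ell]{y_j}:j\in I)\subseteq L$ is the regular local ring $\Oscr_{X,x}[\sqrt[\ell]{y_j}:j\in I]$, on which $\alpha$ has trivial residue at every codimension-one prime; purity for Brauer groups on the regular ring $S_x$ then promotes this to unramifiedness of $\alpha_L$ over $S_x$, and Lemma~\ref{lem:unramified_criterion} completes the proof. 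The main obstacle is the linear-algebraic step: one must extract each $x_j$, modulo units and $\ell$-th powers, from the $f_k$ while simultaneously suppressing the parasitic contributions of the $E_k$-components through $x$, and the $\ell$-Pirutka condition is precisely calibrated so that the relevant rank estimate survives deletion of the rows indexed by $K_0$.
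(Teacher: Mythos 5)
Your proof is correct and follows essentially the same path as the paper's: localize at a point, invoke Lemma~\ref{nicely_ramified} for the local decomposition, use the $\ell$-Pirutka condition to extract (modulo $\ell$-th powers and units) local equations for each $D_j$ through the point from suitable monomials in the $f_k$, and conclude via Lemma~\ref{dantarded} and Lemma~\ref{lem:unramified_criterion}. The only genuine differences are cosmetic: the paper applies the rank condition after deleting all rows $k$ with $x\in E_k$, while you delete only the smaller set $K_0$ of rows whose $E_k$-exponents are actually nonzero mod~$\ell$ (both yield the same cardinality bound via proper intersection); and the paper invokes Lemma~\ref{dantarded} term by term on the symbols of the local decomposition, whereas you apply it once to the whole class, correctly noting that its proof uses only $\ell$-torsion of residues rather than the symbol form.
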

\begin{proof}
  By
  Lemma~\ref{nicely_ramified}, for any point $z\in X$,
  we have that
  \begin{equation}\label{asher}
    \alpha = \alpha_0 + \sum (u_i, x_i) + \sum_{i,j} m_{i,j}(x_i, x_j)
  \end{equation}
  for $u_i\in\Oscr_{X,z}\mult$ and $x_i$ local equations for $D_i$ in $\Oscr_{X,z}$.

  To show that $\alpha$ becomes unramified in $L$, by
  Lemma~\ref{lem:unramified_criterion}, it suffices to show that for
  every point $z\in X$ and each term in \eqref{asher}, there is a
  subfield of $L$ where that term becomes unramified over some regular
  subring contained in $L$ and integral over $\Oscr_{X,z}$.
  % with respect to every valuation of $L$ that dominates $\Oscr_{X,z}$.
  For example, by Lemma~\ref{dantarded}, any term of the form
  $(u_i,x_i)$ will become unramified over a regular subring of an
  extension $F(\sqrt[\ell]{g_i})$ where $g_i$ is a local equation for
  $D_i$ at $z$; any term of the form $(x_i, x_j)$ will become
  unramified over a regular subring of an extension $F(\sqrt[\ell]{g_i},
  \sqrt[\ell]{g_j})$.

  We thus seek the following: for each point $z \in X$ and each
  $j=1,\dotsc,d$, an element $g_j\in F$ such that
  \begin{enumerate}
  \item $g_j$ is a local equation for $rD_j$ at $z$, where $r\equiv 1\bmod\ell$;
  \item $F(\sqrt[\ell]{g_j})\subset L$.
  \end{enumerate}
  Choose $J$ maximal with respect to inclusion so that $z \in D_J$. If
  $J=\emptyset$ (so that $\alpha$ is unramified over $\Oscr_{X,z}$),
  then $g_j=1$ works for all $j$. Otherwise, choose any $j_0 \in J$;
  we will find $g_{j_0} \in F$ satisfying conditions (1) and (2)
  above.

  We claim that we can find $I \subset \{1, \ldots n\}$ with $|I| -
  |J| = n - d$ and $z \not\in \cup E_i$. To see this, set
  \[ I' = \{i \in \{1, \ldots, n\} \,|\, z \in E_i \}. \]
  Since $z \in E_{I'} \cap D_J$, it follows by the properness of the
  intersection that $|I'| + |J| \leq d = \dim(X)$. In particular,
  there are at most $d - |J|$ indices $i$ such that $z \in E_i$. This
  means we can find a set of $n - (d - |J|)$ indices $i$ such that $z
  \not\in E_i$. Let $I$ be such a set.  Since $T$ is an $\ell$-Pirutka
  matrix, the submatrix $T_{I,J}$ has full rank $|J|$ modulo $\ell$,
  and hence we can find $a_i \in \ZZ$ for $i \in I$ such that $\sum_{i
  \in I} a_i m_{i,j} \equiv \delta_{j,j_0} \bmod \ell$ for each $j \in
  J$. Translating in terms of $\til D_i$ and $D_i$, this says that
  there exists $r \equiv 1 \bmod \ell$ such that
  \[
    \sum_{i \in I} a_i \til D_i = r D_{j_0} + \sum_{j \not\in J} b_j D_j
  \]
  and therefore
  \[
    \operatorname{div}\left(\prod_{i \in I} f_i^{a_i}\right) = r D_{j_0} + \sum_{j \not\in J} b_j D_j +
    \sum_{i \in I} a_i E_i.
  \]
  Let $g_{j_0} = \prod_{i \in I} f_i^{a_i}$. Since $z \not\in E_i$ for
  all $i \in I$ and $z \not\in D_j$ for all $j \not\in J$, we find that $g_{j_0}$
  is a local equation for $rD_{j_0}$ in $\Oscr_{X,z}$. It is clear that
  $\sqrt[\ell]{g_{j_0}}\in L$.
\end{proof}

\begin{theorem}\label{clever-splitsville}
Let $X$ be a regular scheme of dimension $d$
admitting an ample invertible sheaf. Let $\ell$ be a prime invertible
on $X$ and $\alpha \in\Hoh^2(F, \m_\ell^{\otimes 2})$ be ramified
along a strict normal crossings divisor.  If there is an
$\ell$-Pirutka $n \times d$ matrix, then we can find rational
functions $f_1, \ldots, f_n \in F$ so that $\alpha$ becomes unramified
in $L = F(\sqrt[\ell]{f_1}, \ldots, \sqrt[\ell]{f_n})$.
\end{theorem}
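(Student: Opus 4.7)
The plan is to reduce \cref{clever-splitsville} directly to \cref{main lemma} by using \cref{colouring lemma} to put the ramification divisor of $\alpha$ into the special form required (a union of exactly $d$ regular divisors on a regular scheme admitting an ample invertible sheaf).

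More precisely, let $D \subset X$ be the strict normal crossings ramification divisor of $\alpha$. By \cref{colouring lemma} we can find a sequence of blowups $f \colon X' \to X$ along regular subschemes such that $f^{-1}(D)$ admits a presentation of length at most $d$; as noted in the proof of that lemma, after possibly blowing up further smooth points we may assume the presentation has length exactly $d$, so $f^{-1}(D) = D_1 \cup \cdots \cup D_d$ where each $D_j$ is a regular (but possibly disconnected) divisor. Since each blowup is taken along a regular center, $X'$ is again a regular noetherian scheme, and it inherits an ample invertible sheaf from $X$ by the standard fact that $\Oscr(n) \otimes f^\ast L$ is ample on a blowup for $n$ sufficiently large when $L$ is ample on the base. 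Finally, $f$ is birational so $\kappa(X') = F$.

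Next I would verify that the pullback of $\alpha$ to the function field of $X'$ is ramified only along $D_1 \cup \cdots \cup D_d$. Outside of $f^{-1}(D)$, the morphism $f$ is an isomorphism and $\alpha$ is unramified by assumption, so the ramification divisor of $\alpha$ viewed on $X'$ is contained in $f^{-1}(D) = \bigcup_j D_j$, as required by the hypothesis of \cref{main lemma}.

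At this point we apply \cref{main lemma} on $X'$ with the divisors $D_1, \dotsc, D_d$ and the given $\ell$-Pirutka matrix $T = (m_{ij})$: setting $\til D_i = \sum_{j=1}^d m_{ij} D_j$ and invoking \cref{proper_lemma} (whose hypotheses are satisfied because $X'$ is regular, admits an ample invertible sheaf, and the $D_j$ form a snc presentation), we obtain rational functions $f_1, \dotsc, f_n \in F$ with $\operatorname{div}(f_i) = \til D_i + E_i$ such that $\{E_i\} \cup \{D_j\}$ intersects properly. \cref{main lemma} then concludes that $\alpha$ becomes unramified in $L = F(\sqrt[\ell]{f_1}, \dotsc, \sqrt[\ell]{f_n})$. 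I expect the main (mild) obstacle to be the bookkeeping that pulling $\alpha$ back under the birational morphism $f$ does not introduce ramification outside of $f^{-1}(D)$ and that the blowup $X'$ retains an ample invertible sheaf; both are standard, but need to be mentioned explicitly since \cref{main lemma} is stated only for a divisor already presented as a union of exactly $d$ regular components on a scheme with an ample invertible sheaf.
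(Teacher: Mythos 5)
Your proposal is correct and takes essentially the same route as the paper: apply \cref{colouring lemma} to blow up to a model $X'$ whose inverse-image snc divisor has a presentation by $d$ regular divisors, then invoke \cref{main lemma}. The only difference is that you spell out the routine verifications (regularity and quasi-projectivity of $X'$, $\kappa(X') = F$, and that the ramification locus of $\alpha$ on $X'$ is contained in $f^{-1}(D)$) that the paper leaves implicit.
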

\begin{proof}
  By Lemma~\ref{colouring lemma} we can perform a sequence of blowups to $X$
  so as to make the ramification divisor of $\alpha$ contained in
  an snc divisor that we can write as a union $D_1 \cup \cdots
  \cup D_d$ of regular divisors. Now the result is an immediate application of
  Lemma \ref{main lemma}.
\end{proof}

\subsection{Some Pirutka matrices}
\label{sec:some-pirutka-matr}
\begin{example}
  \label{ex:pirutka_d_by_dsquared}
  Pirutka's proof in~\cite{pirutka} uses the following $\ell$-Pirutka matrix.
  Consider $n = d^2$, and let $T$ be the $d^2\times d$ matrix given by $d$ (vertical) copies
  of the $d \times d$ identity matrix. The condition is now: for every subset of columns $J$, and
  subset of rows $I$ of order $d^2 - d + |J|$, we have full rank. But notice
  that since $|J| \geq 1$, we are always removing fewer than $d$ rows. Since
  each row of the identity matrix occurs $d$ times, each row of the identity
  matrix must still occur in the $I,J$-minor, showing that $T_{I,J}$ has full rank.
\end{example}

\begin{example}\label{3x3_clever}
  The $3\times 3$ matrix
  $$
  \begin{pmatrix}
    1 & 3 & 3 \\
    1 & 2 & 1 \\
    1 & 1 & 2
  \end{pmatrix}
  $$
  considered in \cite{pirutka}*{Remark~5} is $\ell$-Pirutka for all
  primes $\ell>3$.
\end{example}

\begin{example}\label{4x3_cheeseburger}
  The $4\times 3$ matrix
  $$
  \begin{pmatrix}
    1 & 1 & 1\\
    1 & 1 & 0\\
    0 & 1 & 1\\
    1 & 2 & 1
  \end{pmatrix}
  $$
  found in \cite{pirutka}*{Remark~4}
  is $\ell$-Pirutka for all primes $\ell$.
\end{example}

\begin{remark}
  A computer search shows that
  \begin{enumerate}
  \item there are no $2$-Pirutka $2\times 2$ or larger square matrices, and
  \item there are no $3$-Pirutka $3\times 3$ or larger square matrices.
  \end{enumerate}

  It is easy to make a $2\times 2$ matrix that is $\ell$-Pirutka for
  all primes $\ell > 2$. This allows one to split the ramification of
  classes of odd prime period on surfaces using roots of two rational functions,
  which reproduces results of
  Saltman~\cite{saltman:cyclic_algebra_p-adic_curves} except for
  classes of period $2$.
\end{remark}

\begin{question}
  For which $\ell,n,d$ do there exist $\ell$-Pirutka $n \times d$ matrices?
\end{question}

It is clear that if $n \gg d$, then there exist $\ell$-Pirutka $n\times d$
matrices. We also have the following bound for square matrices.

\begin{proposition}
If $\ell > \binom{2n-1}{n}$ is prime, then there exist $\ell$-Pirutka
$n \times n$ matrices.
\end{proposition}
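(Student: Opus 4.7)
The plan is to prove the claim by exhibiting an explicit Cauchy matrix. For any $2n$ elements $a_1, \ldots, a_n, b_1, \ldots, b_n \in \mathbb{F}_\ell$ with the $a_i$ pairwise distinct, the $b_j$ pairwise distinct, and $a_i \neq b_j$ for every $i, j$, form the $n \times n$ matrix $T_{ij} = (a_i - b_j)^{-1}$.

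I would then invoke the classical Cauchy determinant identity, which asserts that for every pair of subsets $I = \{i_1 < \cdots < i_k\}$ and $J = \{j_1 < \cdots < j_k\}$ of $\{1, \ldots, n\}$,
$$
\det(T_{I,J}) \;=\; \frac{\prod_{s < s'}(a_{i_{s'}} - a_{i_s}) \cdot \prod_{t < t'}(b_{j_t} - b_{j_{t'}})}{\prod_{s, t}(a_{i_s} - b_{j_t})}.
$$
Under the distinctness hypotheses, both numerator and denominator are nonzero in $\mathbb{F}_\ell$, so every square submatrix of $T$ is invertible, i.e., $T$ is $\ell$-Pirutka by definition.

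It then remains only to produce $2n$ pairwise distinct scalars in $\mathbb{F}_\ell$, which requires $\ell \geq 2n$. Since $\binom{2n-1}{n} \geq 2n - 1$ for all $n \geq 1$ (equality at $n=1,2$; for $n \geq 3$ the ratio $\binom{2n+1}{n+1}/\binom{2n-1}{n} = (4n+2)/(n+1) > 2$ outpaces the linear growth of $2n-1$), the hypothesis $\ell > \binom{2n-1}{n}$ combined with the primality of $\ell$ forces $\ell \geq 2n$, supplying the required scalars.

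The main (and minor) obstacle is recalling the Cauchy determinant identity itself; a clean alternative, less constructive but suggestive of how the specific bound $\binom{2n-1}{n}$ might arise, would be the polynomial method: consider the product $P = \prod_{(I,J) : |I| = |J| \geq 1} \det(T_{I,J})$ in $n^2$ indeterminates, note it is nonzero (each factor contains a diagonal monomial with coefficient $\pm 1$), compute $\deg P = \sum_k k\binom{n}{k}^2 = n\binom{2n-1}{n}$ via Vandermonde's convolution, and apply Schwartz-Zippel. This gives the weaker bound $\ell > n\binom{2n-1}{n}$, and the factor of $n$ reflects the redundancy in treating all $n^2$ matrix entries as independent rather than exploiting the $2n$-parameter Cauchy structure.
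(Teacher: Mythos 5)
Your proof is correct, and it takes a genuinely different route from the paper. The paper's argument is a greedy hyperplane-avoidance: writing the $\ell$-Pirutka condition on $T$ as the nonvanishing of all maximal minors of $(I_n \mid T)$, it constructs $T$ one column at a time, at the $(k+1)$-st step choosing $t_{k+1} \in \FF_\ell^n$ off the $\binom{n+k}{n-1}$ hyperplanes cut out by the minors involving that column; the stated bound $\binom{2n-1}{n} = \binom{2n-1}{n-1}$ is exactly the count at the final step. Your Cauchy matrix argument is explicit rather than greedy, and it is strictly stronger: it shows that an $\ell$-Pirutka $n \times n$ matrix exists already when $\ell \geq 2n$, since every square submatrix of a Cauchy matrix is itself a Cauchy matrix and hence invertible by the determinant formula, and $2n$ distinct scalars in $\FF_\ell$ suffice. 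The inequality $\binom{2n-1}{n} \geq 2n-1$ you verify then makes this a proof of the proposition as stated — and in fact a demonstration of just how far from sharp the paper's bound is, as the paper itself acknowledges. Two cosmetic points worth recording if you wrote this up: you should observe that lifting the $\FF_\ell$-valued Cauchy matrix entrywise to $\ZZ$ (the definition requires integer entries) preserves nonvanishing of minors mod $\ell$; and the primality of $\ell$ is not actually needed to pass from $\ell > 2n-1$ to $\ell \geq 2n$ — that is just an integrality statement — though it is of course needed for $\FF_\ell$ to be a field. Your side remark via Schwartz–Zippel is also correct (the degree computation $\sum_k k\binom{n}{k}^2 = n\binom{2n-1}{n}$ checks out), and nicely locates where the paper's bound sits: the paper's column-by-column argument takes the maximum of the per-step hyperplane counts, whereas the blunt polynomial method takes their weighted sum.
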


\begin{proof}
First note that an $n\times n$ matrix $T$ is $\ell$-Pirutka if and
only if all maximal minors of the $n \times 2n$ matrix $A = (I_n|T)$
do not vanish modulo $\ell$.  We will consider building the matrix $A
= (e_1,\ldots,e_n,t_1,\ldots,t_n)$ by inserting the columns $t_i$ one
at a time.  For inserting the first column, we simply require that all
entries in $t_1$ do not vanish.  Once the first column has been fixed,
we require that $t_2$ avoids the $\binom{n+1}{n-1}$ hyperplanes
defined by the maximal minors containing $t_2$, which is certainly
possible if $\ell > \binom{n+1}{n-1}$.  Similarly, once the first $k$
columns have been fixed, we then require that $t_{k+1}$ avoids the
$\binom{n+k}{n-1}$ hyperplanes defined by the maximal minors
containing $t_{k+1}$, which is certainly possible if $\ell >
\binom{n+k}{n-1}$. When $k = n-1$, then inserting the final column
$t_n$ is certainly possible if the stated bound is satisfied.
\end{proof}

Of course this bound is far from sharp.  The hyperplanes in the above
proof are not in general position.

\section{Alterations}
\label{sec:alterations}
\numberwithin{theorem}{section}

In this section, we use Gabber's theory of prime-to-$\ell$ alterations
over a discrete valuation ring; see
\cite{illusie:Gabber_refined_uniformization},
\cite{illusie_laszlo_orgogozo}.  For an example of the statement we
are interested in, see \cite{colliot:bourbaki09}*{Th\'eor\`eme~3.25} and its
proof.

\begin{definition}
  Let $\ell$ be a prime number and $X$ a scheme of finite type over an
  excellent ring.  An \df{$\ell'$-alteration} $X' \to X$ is a proper
  surjective generically finite map such that for every maximal point
  $\eta$ of $X$, there exists a maximal point $\eta'$ of $X'$ over
  $\eta$ such that the residue field extension
  $\kappa(\eta')/\kappa(\eta)$ has degree prime to $\ell$.
\end{definition}

\begin{lemma}
  Let $X$ be an integral scheme, $X' \to X$ an $\ell'$-alteration,
  $\eta'$ a maximal point of $X'$ dominating $X$, and $\alpha \in
  \Br(\kappa(X))$.  If $\ind(\alpha_{\kappa(\eta')})=\ell^N$ then
  $\ind(\alpha)=\ell^N$.
\end{lemma}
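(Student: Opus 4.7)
The plan is to reduce this to the classical fact that the index of a Brauer class changes in a controlled way under finite field extensions. Set $K=\kappa(X)$ and $L=\kappa(\eta')$. Because $X$ is integral with generic point~$\eta$, $\eta'$ is a maximal point of $X'$ dominating $\eta$, and $X'\to X$ is proper and generically finite, the residue field extension $L/K$ is finite; the $\ell'$-alteration hypothesis, together with the choice of $\eta'$, ensures that $d:=[L:K]$ is coprime to $\ell$.

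The key classical input I will use is the following: for any $\alpha\in\Br(K)$ and any finite extension $L/K$, the ratio $r:=\ind(\alpha)/\ind(\alpha_L)$ is an integer that divides $[L:K]$. This can be proved cleanly by letting $D'$ denote the underlying division algebra of $\alpha_L$ and picking a maximal subfield $M\subset D'$ with $[M:L]=\ind(\alpha_L)$: such an $M$ splits $D'$, hence splits $\alpha_L$, and therefore splits $\alpha$, so $\ind(\alpha)\mid[M:K]=[L:K]\cdot\ind(\alpha_L)$, giving $r\mid d$.

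Applying this with $\ind(\alpha_L)=\ell^N$ and $\gcd(d,\ell)=1$ yields $\ind(\alpha)=r\cdot\ell^N$ with $r\mid d$, so $r$ is coprime to~$\ell$. Since the lemma is used on $\ell$-primary classes (those appearing in the reduction to prime-power period, so that $\ind(\alpha)$ is itself a power of~$\ell$), we conclude $r=1$ and $\ind(\alpha)=\ell^N$.

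I do not anticipate serious difficulties: the only nontrivial ingredient is the divisibility $r\mid[L:K]$, which ultimately rests on the existence of a maximal subfield of $D'$ of degree $\ind(\alpha_L)$. The rest is the formal observation that the $\ell'$-alteration hypothesis, applied at the generic point of $X$, produces a field extension whose degree is invisible to the $\ell$-primary part of the index.
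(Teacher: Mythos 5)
Your proof is correct and is exactly what the paper's terse proof means by ``a standard restriction-corestriction argument'': restriction gives $\ind(\alpha_{L})\mid\ind(\alpha)$, the maximal-subfield argument gives $\ind(\alpha)\mid[L:K]\cdot\ind(\alpha_{L})$, and coprimality of $[L:K]$ with $\ell$ finishes. You also rightly flag that, as literally stated, the lemma is missing the hypothesis that $\ind(\alpha)$ (equivalently $\per(\alpha)$) is a power of $\ell$ --- without it the cofactor $r$ need not equal $1$ --- but this hypothesis is always in force where the lemma is applied (Proposition~\ref{prop:diagram} works with $\alpha\in\Br(\kappa(X))[\ell]$), and the implicit choice of $\eta'$ as the maximal point furnished by the definition of an $\ell'$-alteration should similarly be understood.
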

\begin{proof}
  Because $\kappa(\eta')/\kappa(X)$ has degree prime-to-$\ell$, the
  result follows by a standard restriction-corestriction argument.
\end{proof}

\begin{definition}
Let $R$ be a discrete valuation ring, $s\in\Spec R$ the closed point,
and $X$ an $R$-scheme.  If $X$ is equidimensional, flat and of finite
type over $R$, the generic fiber of $X$ over $\Spec R$ is smooth, and
the reduced special fiber $X_{0,\red}$ is a strict normal crossings
divisor on $X$, then $X$ is said to be \df{quasi-semistable} over $R$.
\end{definition}

The following two results are a distillation of the main results of Gabber's
theory of uniformization by $\ell'$-alterations; see
\cite{illusie:Gabber_refined_uniformization}*{Theorem~1.4},
\cite{illusie_laszlo_orgogozo}*{X.2}, and
\cite{colliot:bourbaki09}*{Th\'eor\`eme~3.25}.

\begin{lemma}\label{lem:localform}
  If $X$ is quasi-semistable over $R$, then $X\rightarrow\Spec R$ is \'etale
  locally of the form
  \begin{equation}\label{eq:localform}
    X=\Spec R[t_1,\ldots,t_n]/(t_1^{a_1}\cdots t_r^{a_r}-\pi),
  \end{equation}
  where $\pi$ is a uniformizing parameter of $R$.
\end{lemma}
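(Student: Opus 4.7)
The plan is to analyze the local structure at a closed point $x\in X$, treating separately the easy case $x$ in the generic fiber, where $X \to \Spec R$ is smooth and the asserted form reduces to a polynomial ring (with $r=1$ and $a_1=1$, so $t_1=\pi$), and the substantive case $x\in X_0$ in the special fiber. In the latter, the snc hypothesis on $X_{0,\red}$ forces $X$ to be regular at $x$ and supplies a regular system of parameters $t_1,\dotsc,t_n$ of $\Oscr_{X,x}$ such that $t_1,\dotsc,t_r$ are local equations for the irreducible components of $X_{0,\red}$ through $x$. Since $V(\pi)=X_0$ scheme-theoretically while $V(t_1\cdots t_r)=X_{0,\red}$ locally, the two principal ideals agree up to radical, so factorization in the UFD $\Oscr_{X,x}$ produces
\[
\pi \;=\; u\, t_1^{a_1}\cdots t_r^{a_r}, \qquad u\in\Oscr_{X,x}\mult,
\]
where $a_i$ is the multiplicity of the $i$-th component in $X_0$.

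The heart of the argument is to absorb the unit $u$ after passing to an \'etale neighborhood. Let $g=\gcd(a_1,\dotsc,a_r)$, write $a_i=g b_i$ with $\gcd(b_i)=1$, and choose B\'ezout integers $c_i$ with $\sum c_i b_i = 1$. Adjoining a $g$-th root $w$ of $u$ yields an \'etale extension of the henselization of $\Oscr_{X,x}$, at least when $g$ is invertible in the residue field, and replacing $t_i$ by $t_i' := w^{c_i} t_i$ gives
\[
(t_1')^{a_1}\cdots(t_r')^{a_r} \;=\; w^{g \sum c_i b_i}\, t_1^{a_1}\cdots t_r^{a_r} \;=\; u\, t_1^{a_1}\cdots t_r^{a_r} \;=\; \pi.
\]
The $t_i'$, together with $t_{r+1},\dotsc,t_n$, still form a regular system of parameters, and the universal property then produces an $R$-morphism from the \'etale neighborhood to $\Spec R[t_1,\dotsc,t_n]/(t_1^{a_1}\cdots t_r^{a_r}-\pi)$; a comparison of completed local rings or tangent spaces at $x$ upgrades this to an \'etale isomorphism.

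The main obstacle is the unit-absorption step when $g$ is divisible by the residue characteristic, in which case the naive $g$-th root extension fails to be \'etale. In that situation I would appeal to the quasi-semistable set-up to guarantee $\gcd(a_i)$ is prime to the residue characteristic (for instance by first applying Gabber's $\ell'$-uniformization to replace $X$ by a quasi-semistable model in which the multiplicities along the components of $X_{0,\red}$ are controlled), or alternatively replace the root-extraction by a more delicate \'etale change of coordinates absorbing $u$ into the parameters $t_{r+1},\dotsc,t_n$ and the units along the way. Once absorbed, the verification that the resulting map is \'etale at $x$ is routine, relying only on the equality of regular systems of parameters modulo the distinguished relation.
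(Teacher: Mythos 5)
The paper gives no proof of this lemma; it is presented as a distillation of Gabber's theory of $\ell'$-alterations and cited from the references to Illusie, Illusie--Laszlo--Orgogozo, and Colliot-Th\'el\`ene's Bourbaki expos\'e. So there is no in-paper argument to compare against; what follows assesses your proposal on its own terms.

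Your framework is the right one: the snc hypothesis forces $X$ to be regular, supplies a partial regular system of parameters $t_1,\dotsc,t_r$ cutting out the components of $X_{0,\red}$, and factorization in $\Oscr_{X,x}$ gives $\pi=u\,t_1^{a_1}\cdots t_r^{a_r}$ with $u$ a unit. The genuine gap is exactly the one you flag and then wave away: the unit-absorption step extracts a $g$-th root of $u$ with $g=\gcd(a_1,\dotsc,a_r)$, which is \'etale only when $g$ is prime to the residue characteristic $p$. Nothing in the paper's definition of quasi-semistable constrains the $a_i$, and neither of your fallbacks closes the hole. Gabber's $\ell'$-uniformization controls multiplicities modulo $\ell$, not modulo $p$ (and here $\ell\neq p$ by standing hypothesis), so invoking it cannot guarantee $p\nmid g$. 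Absorbing $u$ into $t_{r+1},\dotsc,t_n$ also cannot work, since those coordinates do not appear in the monomial $t_1^{a_1}\cdots t_r^{a_r}$: no automorphism moving only them changes the equation $\pi=u\prod t_i^{a_i}$. In fact, when $p\mid g$ the unit genuinely matters: for a unit $u$ of $R$ that remains a non-$p$-th-power in every unramified extension (e.g., a $1$-unit such as $1+p\in\ZZ_p$), the ring $R[t]/(u t^p-\pi)$ is not \'etale-locally isomorphic to $R'[t]/(t^p-\pi')$, so the exact form \eqref{eq:localform} cannot be achieved by any \'etale change of coordinates. The lemma as literally stated should therefore be read as an output packaged with Gabber's theorem rather than as a formal consequence of the snc condition alone, and your proof would need to either import that stronger statement or add a hypothesis on the $a_i$. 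It is worth noting that the downstream use of the lemma in Section 6 only needs the ideal-theoretic identities $I=(t_1\cdots t_r)$ and $(\pi)=(t_1^{a_1}\cdots t_r^{a_r})$, which are insensitive to the unit and which your first step already establishes.
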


\begin{theorem}[Gabber]\label{thm:gabber}
  If $R$ is an excellent henselian discrete valuation ring with residue field
  $k$ of characteristic $p\geq 0$ and fraction field $K$, and $X$ is a proper
  scheme over $R$, then for any prime $\ell \neq p$, there exists a commutative
  diagram of $\ell'$-alterations
  \begin{equation*}
    \xymatrix{
      X'\ar[d]\ar[r]  &   X\ar[d]\\
      \Spec R'\ar[r]    &   \Spec R
    }
  \end{equation*}
  with $R'$ an excellent henselian discrete valuation ring such that
  $X'$ is a regular scheme that is quasi-semistable and projective
  over $R'$.
\end{theorem}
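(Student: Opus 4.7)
The plan is to derive the statement essentially by citing Gabber's refined uniformization theorem; the theorem statement is labeled ``a distillation'' of Gabber's work, so the proof is one of extraction rather than construction. The deepest input is \cite{illusie:Gabber_refined_uniformization}*{Theorem~1.4}, with the version best suited to our formulation recorded in \cite{illusie_laszlo_orgogozo}*{X.2.1}: for $X$ proper over a quasi-excellent noetherian trait and $\ell$ invertible in the residue field at the closed point, there exist a finite extension $R_1/R$ of DVRs and a commutative square of projective $\ell'$-alterations whose top arrow is $Y \to X$, with $Y$ regular over $R_1$ and $Y_{0,\red}$ a strict normal crossings divisor.

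First, I would feed $X$ and $\ell$ into Gabber's theorem to obtain such a pair $(R_1, Y)$. Since $R$ is excellent and henselian and $R_1$ is finite over $R$, the ring $R_1$ is excellent and is a finite product of henselian local rings (henselianness goes up along finite extensions); selecting the local factor through which the alteration $Y \to \Spec R_1$ dominates produces an excellent henselian DVR $R'$, and the corresponding restriction of $Y$ is our $X'$. Regularity, projectivity, flatness, smoothness of the generic fiber, and the strict normal crossings form of the reduced special fiber are all preserved under passage to this local factor, so $X'$ is regular, projective, and quasi-semistable over $R'$ in the sense guaranteed by Lemma~\ref{lem:localform}. Composition $X' \to X \times_R R' \to X$ then realizes the top arrow as an $\ell'$-alteration, and $\Spec R' \to \Spec R$ as a (trivially) prime-to-$\ell$ extension of residue fields on the generic points.

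The main obstacle is not any genuine mathematical step but the bookkeeping of adapting Gabber's theorem, originally stated for general quasi-excellent schemes, to the specific setting of an excellent henselian DVR with simultaneous projectivity, regularity, and quasi-semistability. This packaging is already carried out in the exposition of \cite{colliot:bourbaki09}*{Th\'eor\`eme~3.25}, after which the stated theorem is immediate.
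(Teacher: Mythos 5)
Your proposal is correct and takes essentially the same approach as the paper: the paper likewise states the theorem as a distillation of Gabber's refined uniformization theorem, citing exactly the same three references (\cite{illusie:Gabber_refined_uniformization}*{Theorem~1.4}, \cite{illusie_laszlo_orgogozo}*{X.2}, \cite{colliot:bourbaki09}*{Th\'eor\`eme~3.25}) and offering no further argument. Your added bookkeeping about $R_1$ decomposing as a finite product of excellent henselian local rings and selecting the factor dominated by $Y$ is a correct and welcome elaboration of what the paper leaves implicit.
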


\begin{proposition}\label{prop:diagram}
Let $R$ be an excellent henselian discrete valuation ring, $X$ be an
integral scheme proper over $R$ of relative dimension $d$. If
$\alpha \in \Br(\kappa(X))[\ell]$, then there exists a diagram of
morphisms
  \begin{equation}
    \label{eq:Gabrutka}
    \xymatrix{
    Y'\ar[r]^h \ar[d] & Y \ar[dr]\ar[rr]^g & & X' \ar[dl] \ar[r]^{f} & X \ar[d] \\
      \Spec R''\ar[rr]  & & \Spec R' \ar[rr] & & \Spec R
    }
  \end{equation}
  where
  \begin{enumerate}
  \item $R'/R$ and $R''/R'$ are finite extensions of excellent
    henselian discrete valuation rings such that $R''/R$ has degree
    prime to $\ell$,

  \item $f$ and $h$ are $\ell'$-alterations,

  \item $X'$ and $Y'$ are regular and integral,

  \item $X' \to \Spec R'$ and $Y' \to \Spec R''$ are projective and
    quasi-semistable,

  \item $Y$ is integral and the function field extension induced by
    $g$ has the form
    $$\kappa(Y) = \kappa(X')(\sqrt[\ell]{f_1},\dotsc,\sqrt[\ell]{f_N})$$
    for some $N$, and

\item $\alpha_{\kappa(Y')}$ lies in the subgroup
    $\Br(Y')[\ell]\subseteq\Br(\kappa(Y'))[\ell]$.
  \end{enumerate}
  Moreover, if there exists an $\ell$-Pirutka $n \times (d+1)$ matrix, then
  we may take $N=n$.
\end{proposition}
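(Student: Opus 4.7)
The plan is to apply Gabber's $\ell'$-alteration theorem twice, with Theorem~\ref{clever-splitsville} sandwiched between them to split ramification. The refined form of Gabber's uniformization (see \cite{illusie:Gabber_refined_uniformization}, \cite{illusie_laszlo_orgogozo}, \cite{colliot:bourbaki09}*{Th\'eor\`eme~3.25}) allows us to prescribe a horizontal divisor that should become strict normal crossings together with the special fiber after the alteration.

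First I would apply this refined version to $X/R$, with the prescribed divisor taken to be the reduced closure in $X$ of the locus of ramification of $\alpha$ at the regular codimension-$1$ points of $X$ (padded with the non-regular locus if needed). This produces a regular integral projective quasi-semistable $R'$-scheme $X'$ with $R'/R$ a finite extension of excellent henselian DVRs of degree prime to $\ell$, together with an $\ell'$-alteration $f : X' \to X$, such that the reduced ramification divisor of $\alpha_{\kappa(X')}$ on $X'$ is contained in a strict normal crossings divisor. Since $X'$ is regular, projective, and of dimension $d+1$, I would then apply Theorem~\ref{clever-splitsville} using the given $\ell$-Pirutka $n \times (d+1)$ matrix, producing $f_1,\dotsc,f_n \in \kappa(X')$ so that $\alpha_L$ is unramified over $R'$, where $L = \kappa(X')(\sqrt[\ell]{f_1},\dotsc,\sqrt[\ell]{f_n})$. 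Letting $Y$ be the normalization of $X'$ in $L$ yields an integral scheme, finite over $X'$, with function field $L$ in the required form (giving property~(5) with $N = n$) and a finite map $g : Y \to X'$.

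Next I would apply Theorem~\ref{thm:gabber} to the proper $R'$-scheme $Y$, obtaining a regular integral projective quasi-semistable $R''$-scheme $Y'$ with an $\ell'$-alteration $h : Y' \to Y$ covering $\Spec R'' \to \Spec R'$ of degree prime to $\ell$; composing with $R'/R$ gives $[R'':R]$ prime to $\ell$, establishing~(1)--(4). For property~(6), Lemma~\ref{lem:unram-climbs} applied along the finite extension $\kappa(Y')/L$ shows that $\alpha_{\kappa(Y')}$ is unramified over $R''$. Purity (Example~\ref{ex:threefold-purity}) for the regular projective $R''$-scheme $Y'$ then lifts $\alpha_{\kappa(Y')}$ to a class in $\Hoh^2(Y',\Gm)$, which coincides with $\Br(Y')$ on torsion since $Y'$ admits an ample line bundle; as $\alpha_{\kappa(Y')}$ is $\ell$-torsion we land in $\Br(Y')[\ell]$ as required.

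The main obstacle I anticipate is the very first step. Theorem~\ref{thm:gabber} as stated addresses only the special fiber, so arranging that the horizontal ramification of $\alpha$ also becomes snc requires the stronger, divisor-aware form of Gabber's uniformization, and one has to verify both that the prescribed divisor on $X$ is chosen so its pullback captures all ramification of $\alpha_{\kappa(X')}$ and that the extra alteration keeps the DVR extension $R'/R$ of degree prime to $\ell$. The remaining steps---the normalization in $L$, the second Gabber alteration, and the purity argument---are by comparison routine, once the refined-uniformization input is in hand.
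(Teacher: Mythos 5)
Your proposal tracks the paper's proof closely and the overall strategy is correct: Gabber once, then split ramification via Theorem~\ref{clever-splitsville}, normalize, Gabber again, purity. The main structural difference is that the paper breaks the first step into two: it first applies Theorem~\ref{thm:gabber} to produce a regular integral projective model $X_1$ over $R_1$, and only then considers the (now well-defined Weil) ramification divisor $D_1$ of $\alpha_{\kappa(X_1)}$ and applies Gabber's \emph{embedded} uniformization \cite{illusie:Gabber_refined_uniformization}*{Theorem~1.4} to $(X_1, D_1)$. Your one-step version, prescribing a closed subset of $X$ ``capturing'' the ramification, does work, but it requires a little argument (essentially Lemma~\ref{lem:unram-climbs} plus purity for regular local rings) to show that no new ramification appears on $X'$ over points of $X$ outside your prescribed set; the paper's two-step route sidesteps this by working with the actual ramification divisor on a regular model.

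Two details you omit that the paper supplies: (a) the embedded uniformization only gives a normal crossings divisor, not a \emph{strict} normal crossings divisor, and an additional sequence of blowups is needed (the paper cites \cite{dejong-alterations}*{Paragraph~2.4}) before Theorem~\ref{clever-splitsville} becomes applicable; and (b) the residue theory and Theorem~\ref{clever-splitsville} live in $\Hoh^2(F,\m_\ell^{\otimes 2})$, so one must (possibly) enlarge $R'$ by a further prime-to-$\ell$ extension to adjoin a primitive $\ell$th root of unity before transporting $\alpha$ from $\Br(F)[\ell]$ into that group. Neither is a deep issue, but both belong in the argument; the rest of your proof (normalization in $L$, second Gabber alteration, propagation of unramifiedness via Lemma~\ref{lem:unram-climbs}, and the purity/ample-line-bundle step to land in $\Br(Y')[\ell]$) matches the paper's.
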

\begin{proof}
  By Theorem~\ref{thm:gabber}, there exists a commutative diagram of
  $\ell'$-alterations
  $$
  \xymatrix{
  X_1\ar[d]\ar[r]^{f_1}  &   X \ar[d]\\
  \Spec R_1\ar[r]    &   \Spec R
  }
  $$
  where $X_1$ is regular and integral, and projective over $R_1$.
  Consider the ramification divisor $D_1$ of $\alpha_{\kappa(X_1)}$ on
  $X_1$.  By an application of Gabber's embedded uniformization (see
  \cite{illusie:Gabber_refined_uniformization}*{Theorem~1.4}), there
  exists a further commutative diagram of $\ell'$-alterations
  $$
  \xymatrix{
  X'\ar[d]\ar[r]^{f_2}  &   X_1\ar[d]\\
  \Spec R'\ar[r]    &   \Spec R_1
  }
  $$
  where $X'$ is regular and integral, and projective and
  quasi-semistable over $R'$, and such that $f_2^{-1}(D_1)_\red\cup
  (X_s)_\red$ has normal crossings (but not necessarily strict normal
  crossings), where $s\in\Spec R'$ is the closed point. After blowing
  up $X'$, we may assume that $f_2^{-1}(D_1)_\red\cup (X'_s)_\red$ is
  a strict normal crossings divisor
  \cite{dejong-alterations}*{Paragraph~2.4} (see also
  \cite{brian}). On the other hand, the ramification divisor of
  $\alpha_{\kappa(X')}$ must be contained in $f_2^{-1}(D_1)_\red$, so
  (after this blowing up) that $\alpha_{\kappa(X')}$ has ramification
  divisor with strict normal crossings. We compose these two squares
  to arrive at the right-most square in the desired diagram.  We can
  also assume, by possibly taking a further prime-to-$\ell$ extension,
  that $R'$ has a primitive $\ell$th root of unity so that we may
  apply the results of Section~\ref{sec:ramification} to classes in
  the Brauer group.

  By Theorem~\ref{clever-splitsville} and
  Example~\ref{ex:pirutka_d_by_dsquared}, there exist rational functions
  $f_1, \dotsc, f_N$ in $\kappa(X')$ such that $\alpha_{L}$ is
  unramified, where $L =
  \kappa(X')(\sqrt[\ell]{f_1},\dotsc,\sqrt[\ell]{f_N})$.  (We can always
  choose $N=(d+1)^2$, and moreover, if there exists an $\ell$-Pirutka $n
  \times (d+1)$ matrix, we can take $N=n$.)  Let $Y$ be the normalization of
  $X'$ in $L$ and $g : Y \to X'$ the induced map.  We now apply
  Theorem~\ref{thm:gabber} again to arrive at an $\ell'$-alteration
  $$
  \xymatrix{
  Y'\ar[d]\ar[r]^h  &   Y\ar[d]\\
  \Spec R''\ar[r]    &   \Spec R'
  }
  $$
  where $Y'$ is regular and integral, and projective and
  quasi-semistable over $R''$.  Since $\alpha_{\kappa(Y)}$ is
  unramified, we have by Lemma \ref{lem:unram-climbs} that
  $\alpha_{\kappa(Y')}$ is unramified, whence $\alpha_{\kappa(Y')} \in
  \Br(Y')$ by Example \ref{ex:threefold-purity}.
\end{proof}

\section{Existence of twisted sheaves on strict normal crossings
  surfaces}\label{sec:snc}
\numberwithin{theorem}{section}
\numberwithin{equation}{section}

The purpose of this section is to show that period equals index for
Brauer classes on strict normal crossings surfaces. More precisely, if
$X$ is a snc surface over a separably closed or semi-finite field and
$\alpha\in\Hoh^2(X,\m_n)$ is a cohomology class with $n$ invertible on
$X$, we show in Proposition \ref{prop:snc} below that there is an
Azumaya algebra $A$ of degree $n$ on $X$ with cohomology class
$\alpha$ (equivalently, there exists a twisted sheaf of rank $n$ and trivial determinant).

We note first that Brauer groups of curves vanish in a high degree of
generality.

\begin{lemma}
  \label{lem:curves}
  Let $C$ be a curve over a field $k$ of characteristic $p \geq 0$.
  If $k$ is separably closed (resp. $k$ is semi-finite and $C$ is
  proper), then $\Br(C)[n]=0$ for $n$ prime to $p$ (resp. $\Br(C)=0$).
\end{lemma}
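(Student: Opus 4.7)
Plan: The common strategy is the Kummer sequence $1 \to \m_n \to \Gm \xrightarrow{n} \Gm \to 1$, whose long exact sequence in \'etale cohomology produces a surjection $\Hoh^2(C,\m_n) \twoheadrightarrow \Hoh^2(C,\Gm)[n] \supseteq \Br(C)[n]$. I would control this using Artin vanishing and Tsen's theorem in the separably closed case, and via Hochschild--Serre descent plus a Lang-type vanishing in the semi-finite case.

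For $k$ separably closed, I would first reduce to $C$ integral and regular by passing to the normalization $\nu:\tilde{C}\to C$: since $\nu$ is finite birational and the residue fields of the singular points are algebraic over $k$ (hence having trivial prime-to-$p$ Brauer group), a standard exact-sequence chase from $0 \to \Gm \to \nu_*\Gm \to \nu_*\Gm/\Gm \to 0$ reduces the question to $\Br(\tilde C)[n]$. For regular $\tilde C$, there are two subcases: if $\tilde C$ is affine, the \'etale cohomological dimension of an affine curve over a separably closed field is at most one for torsion of order prime to $p$, so $\Hoh^2(\tilde C, \m_n) = 0$ and the Kummer sequence concludes; if $\tilde C$ is proper, the injectivity $\Br(\tilde C) \hookrightarrow \Br(\kappa(\tilde C))$ for regular schemes, combined with Tsen's theorem $\Br(\kappa(\tilde C))[n] = 0$ (since $\kappa(\tilde C)$ has transcendence degree one over the separably closed field $k$), yields the vanishing.

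For $k$ semi-finite with $C$ proper, I would apply the Hochschild--Serre spectral sequence
\[
\Hoh^p\bigl(k,\, \Hoh^q(C_{\bar{k}}, \Gm)\bigr) \;\Longrightarrow\; \Hoh^{p+q}(C, \Gm),
\]
noting that $\Br(C_{\bar{k}}) = 0$ by the first case applied to the separable closure. The five-term exact sequence then pins $\Br(C)$ between $\Hoh^2(k, \bar{k}^\times) = \Br(k)$ and a subgroup of $\Hoh^1(k, \operatorname{Pic}(C_{\bar{k}}))$. The Brauer group $\Br(k)$ vanishes because, under the semi-finite hypothesis, $\operatorname{Gal}(\bar{k}/k)$ has cohomological dimension one (the $\ell$-Sylow is $\ZZ_\ell$ for every prime $\ell$). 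For the Picard contribution, the short exact sequence $0 \to \operatorname{Pic}^0(C_{\bar{k}}) \to \operatorname{Pic}(C_{\bar{k}}) \to \ZZ^r \to 0$ together with $\Hoh^1(k,\ZZ) = 0$ (no nontrivial continuous homomorphisms from a profinite group to $\ZZ$) reduces matters to the Lang-type vanishing $\Hoh^1(k, \operatorname{Pic}^0(C_{\bar{k}})) = 0$.

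The main obstacle is this last Lang-type vanishing, which uses both halves of the semi-finite definition. I would prove it prime-by-prime: the pseudo-algebraic-closedness of the maximal prime-to-$\ell$ extension $k^{(\ell')}$ kills $\Hoh^1(k^{(\ell')}, \operatorname{Pic}^0(C_{\bar{k}}))$, since a PAC field admits no nontrivial torsors for a smooth connected algebraic group (every such torsor is geometrically connected and so has a rational point); then the $\ZZ_\ell$-structure of $\operatorname{Gal}(\bar{k}/k^{(\ell')})$ (again of cohomological dimension one) lets a Hochschild--Serre argument pull the vanishing back to the $\ell$-primary part of $\Hoh^1(k, \operatorname{Pic}^0(C_{\bar{k}}))$.
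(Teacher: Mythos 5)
Your proof takes essentially the same route as the paper's: a Leray/Hochschild--Serre decomposition of $\Hoh^2(C,\Gm)$ over $\Spec k$, with the $\Hoh^2(k,\Gm)$ contribution killed by the cohomological-dimension-one property of a semi-finite field, the geometric $\Br(C_{\bar k})$ contribution killed by the separably-closed case, and the $\Hoh^1(k,\Pic(C_{\bar k}))$ contribution killed by a Lang-type vanishing exploiting PAC-ness of the maximal prime-to-$\ell$ extensions together with their $\ZZ_\ell$ Galois structure. Two differences are worth flagging. First, for the separably closed case you give a self-contained normalization/affine-cohomological-dimension/Tsen argument, whereas the paper simply cites Grothendieck's \emph{Brauer~III}; both are correct. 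Second, the paper decomposes the Picard sheaf by passing through the normalization $\tilde C$ of $C_{\red}$, which has the advantage that the N\'eron--Severi quotient of $\ShPic_{\tilde C/k}$ is manifestly a permutation module $\bigoplus_i\ZZ[\Gamma/H_i]$ (the factors indexed by components, permuted by $\Gamma=\Gal(k^s/k)$), so that $\Hoh^1$ vanishes by Shapiro's lemma. Your short exact sequence $0\to\Pic^0(C_{\bar k})\to\Pic(C_{\bar k})\to\ZZ^r\to 0$ together with the remark that ``$\Hoh^1(k,\ZZ)=0$ because there are no continuous homomorphisms from a profinite group to $\ZZ$'' implicitly treats $\ZZ^r$ as having trivial Galois action; when the geometric components of $C$ are permuted nontrivially, $\ZZ^r$ is a nontrivial Galois lattice and $\Hoh^1$ of such a lattice is not automatically zero. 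You need to know (or arrange, as the paper does via the normalization) that the lattice is a permutation module, and then apply Shapiro; as written this step is a small gap, though an easy one to repair.
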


\begin{proof}
  If $k$ is separably closed, then this
  is~\cite{grothendieck-brauer-3}*{Corollaire~1.3}.  Thus, assume that
  $k$ is semi-finite and $C$ is proper over $k$.  Consider the Leray
  spectral sequence in \'etale cohomology 
  $$\Eoh_2^{st}=\Hoh^s(k,\Roh^t\pi_*\mathds{G}_{m,C})\Longrightarrow\Hoh^{s+t}(C,\mathds{G}_{m,C})$$
  for the structural morphism $\pi:C\rightarrow \Spec k$.  The only possible
  contributions to $\Hoh^2(C,\mathds{G}_{m,C})$ are
  \begin{align*}
      \Hoh^0(k,\Roh^2\pi_*\mathds{G}_{m,C})&,\\
      \Hoh^1(k,\Roh^1\pi_*\mathds{G}_{m,C})&\iso\Hoh^1(k,\ShPic_{C/k}),\\
      \Hoh^2(k,\Roh^0\pi_*\mathds{G}_{m,C})&\iso\Hoh^2(k,\Gm).
  \end{align*}
  The last vanishes because $k$ is semi-finite.  To analyze the second
  term, let $\tilde{C}\rightarrow C$ be the normalization of the
  largest reduced subscheme $C_{\mathrm{red}}$ of
  $C$. By~\cite{neronmodels}*{Corollary~9.2.11} (in fact, by reading the
  preceding Propositions carefully), there is an exact
  sequence
  $$0\rightarrow
  G\rightarrow\ShPic_{C/k}\rightarrow\ShPic_{\tilde{C}/k}\rightarrow
  0$$
  of \'etale sheaves over $k$, where $G$ is a geometrically connected
  commutative linear algebraic group. Moreover, since $\tilde{C}/k$ is
  smooth (since $k$ is perfect), there is
  (e.g.,~\cite{neronmodels}*{Propositions~9.2.3,~9.2.14}) an exact
  sequence
  $$0\rightarrow\ShPic_{\tilde{C}/k}^0\rightarrow\ShPic_{\tilde{C}/k}\rightarrow A\rightarrow
  0,$$
  where $\ShPic_{\tilde{C}/k}^0$ is an abelian variety and $A$ is an
  \'etale sheaf on $k$ with $A(k^s)\iso\ZZ^r$, where~$r$ is the number
  of irreducible components of $\tilde{C}_{k^s}$. In fact, $A$ becomes
  constant as soon as each component of $\tilde{C}$ acquires a
  rational point. In particular, $A_{k^s}$ is isomorphic to $\ZZ^r$,
  with the factors corresponding to the irreducible components, and
  hence permuted by the Galois group $\Gamma = \Gal(k^s/k)$. In
  particular, this means that $A_{k^s}$, as a Galois module, is of the
  form $\bigoplus_i \ZZ[\Gamma/H_i]$ for some open subgroups $H_i
  \subset \Gamma$. By Shapiro's Lemma, we then have that $\Hoh^1(\Spec k, A) =
  \Hoh^1(\Gamma, \bigoplus_i \ZZ[\Gamma/H_i]) = \prod_i \Hoh^1(H_i,
  \ZZ) = 0$.

  Note that since $k$ is semi-finite, we have that $\Hoh^1(\Spec
  k,G)=0$ for each geometrically integral commutative group scheme $G$
  over $k$. To see this, it suffices to show (since $G$ is
  commutative) that any $G$-torsor $T$ has a $0$-cycle of degree~$1$,
  for which it suffices to show that $T$ is trivial over the maximal
  prime-to-$\ell$ extension for each prime number $\ell$. But these
  fields are all pseudo-algebraicaly closed (by the definition of
  semi-finiteness), hence $T$ has a point over each of them, as $G$ is
  assumed to be geometrically integral.

  It follows by considering the above exact sequences that
  $\Hoh^1(k,\ShPic_{C/k})=0$. It remains to prove that
  $\Hoh^0(k,\Roh^2\pi_*\mathds{G}_{m,C})=0$. But, the stalk of
  $\Roh^2\pi_*\mathds{G}_{m,C}$ is isomorphic to $\Hoh^2(C_{k^s},\Gm)$, where
  $k^s$ is the separable closure of $k$. Since $k^s$ is algebraically closed (as
  $k$ is perfect), this group vanishes
  by~\cite{grothendieck-brauer-3}*{Corollaire~1.2}.
\end{proof}

\begin{remark}
There is also a proof that uses Tsen's theorem (resp.\ class field
theory) to treat the regular case and then deduces the general case by
deformation from points and a Moret-Bailly type formal gluing
argument, but we omit the details here.
\end{remark}

\begin{remark}
  The conclusion that $\Br(C)[n]=0$ for $n$ prime to $p$ cannot be improved
  to $\Br(C)=0$ without assuming that $k$ is algebraically closed. If $k$ is
  separably closed but not algebraically closed, then $\Br(k[x])$ is
  nonzero. This example appears already in Auslander and
  Goldman~\cite{auslander-goldman}*{Theorem~7.5}. Consider the Artin--Schreier extension $L$ of $k(x)$ defined by
  $y^p-y-x=0$. The ring $k[x,y]/(y^p-y-x)$ is easily seen to be smooth over
  $k$, and hence it is the integral closure of $k[x]$ in $L$. Since $k$ is
  not separably closed, there is an element $w\in k$ such that $w\notin
  k^p$. The algebra
  \begin{equation*}
    k[x]\langle y,z\rangle/(y^p-y-x,z^p-w,zy-yz-z)
  \end{equation*}
  defines an Azumaya algebra over $k[x]$. For more details, see Gille and
  Szamuely~\cite{gille-szamuely}*{Section~2.5}. This also explains why the
  full Brauer group is not $\AA^1$-homotopy invariant.
\end{remark}

The following lemma shows that the only obstruction to extending an
$\Xscr$-twisted locally free sheaf on a curve $C$ inside a surface $X$
is whether or not the determinant extends.  It is a direct
generalization to the twisted setting of~\cite{dejong}*{Lemma~5.2},
although the proof is slightly different owing to the fact that if the
$\m_n$-gerbe $\Xscr$ is nontrivial, then we cannot make use of an
$\Xscr$-twisted line bundle on $X$.

\begin{lemma}\label{lem:extension}
Let $C$ be a proper curve in a regular quasi-projective
$2$-dimensional scheme $X$ over a field $k$ of characteristic $p \geq
0$, and fix a $\m_n$-gerbe $\Xscr\rightarrow X$, where $n$ is prime
to $p$.  Suppose that $\Xscr$ has index $n$ and that the Brauer class
of $\Xscr$ vanishes on every proper curve in $X$, e.g., $k$ is
separably closed or semi-finite by Lemma~\ref{lem:curves}.

If $V$ is a locally free $\Xscr$-twisted sheaf on $C$ of
rank $n$ with $\det V=L|_C$, where $L\in\Pic(X)$, then, possibly
after taking a finite prime-to-$n$ extension of $k$, there exists a
locally free $\Xscr$-twisted sheaf $W$ on $X$ such that $W|_C\iso V$
and $\det(W)\iso L$.
\end{lemma}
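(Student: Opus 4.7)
The plan is to follow the strategy of de Jong's \cite{dejong}*{Lemma~5.2}, using deformation theory along the normal bundle of $C$ to replace the role that an $\Xscr$-twisted line bundle on $X$ plays in the untwisted case (such a line bundle is not available when $\Xscr$ has nontrivial Brauer class).

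\emph{Step 1 (Formal extension with determinant control).}\ Let $C_j$ denote the $j$-th infinitesimal thickening of $C$ in $X$ and let $\mathcal{I}$ be its ideal sheaf. We extend $V$ inductively to locally free $\Xscr|_{C_j}$-twisted sheaves $V_j$. The obstruction to extending $V_j$ to $V_{j+1}$ lies in the untwisted group $\mathrm{Ext}^2_{\mathcal{O}_C}(V, V\otimes \mathcal{I}^j/\mathcal{I}^{j+1})$ (the $\mathcal{H}om$ of two $\Xscr$-twisted sheaves is canonically an $\mathcal{O}$-module), which vanishes because $C$ has dimension $1$. At each step the set of extensions is a torsor under $\mathrm{Ext}^1_{\mathcal{O}_C}(V, V\otimes \mathcal{I}^j/\mathcal{I}^{j+1})$, and the trace decomposition $\mathcal{H}om(V,V) \cong \mathcal{O}_C \oplus \mathcal{H}om(V,V)_0$ (available since $n$ is invertible on $C$) lets us move within this torsor to preserve the condition $\det V_j \cong L|_{C_j}$ at every stage. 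Passing to the limit yields a locally free $\Xscr|_{\hat X_C}$-twisted sheaf $\hat W$ of rank $n$ on the formal completion $\hat X_C$ with $\det \hat W \cong L|_{\hat X_C}$.

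\emph{Step 2 (Algebraization by formal glueing).}\ By the index hypothesis there exists a locally free $\Xscr$-twisted sheaf $W_0$ on $X$ of rank $n$. After possibly enlarging $k$ by a prime-to-$n$ extension to extract an $n$-th root of $L\otimes(\det W_0)^{-1}$ in $\mathrm{Pic}(X)$, we may tensor $W_0$ by a line bundle so that $\det W_0 \cong L$. On the overlap $\hat X_C \setminus C$ the restrictions of $\hat W$ and $W_0$ are locally free $\Xscr$-twisted sheaves of rank $n$ with the same determinant, and after a further prime-to-$n$ extension of $k$ they admit a determinant-preserving isomorphism (trivializing the $\mathrm{SL}_n$-torsor of such isomorphisms). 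Choosing one, Beauville--Laszlo formal glueing along the Cartier divisor $C$ produces a coherent $\Xscr$-twisted sheaf $W$ on $X$ whose restrictions to $\hat X_C$ and to $X\setminus C$ are $\hat W$ and $W_0|_{X\setminus C}$ respectively. By construction $W|_C \cong V$, the sheaf $W$ is locally free of rank $n$ (local freeness is detected on these two pieces), and $\det W \cong L$.

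\emph{Main obstacle.}\ The principal difficulty is to execute the formal-to-algebraic passage for twisted sheaves while simultaneously controlling determinants, so that the condition $\det W \cong L$ holds on all of $X$ and not merely formally near $C$. This is where the hypotheses that $n$ is invertible on $X$ and that $\Xscr$ has index $n$ (providing the reference sheaf $W_0$) enter essentially: the former gives the trace splitting used in Step~1, and the latter supplies a global rank-$n$ twisted sheaf against which to glue. The prime-to-$n$ extensions of $k$ permitted in the conclusion absorb the multiplicative obstructions encountered along the way, namely extracting $n$-th roots of line bundle classes in $\mathrm{Pic}(X)$ and trivializing $\mathrm{SL}_n$-torsors of formal glueing data.
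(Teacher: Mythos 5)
The approach you propose diverges sharply from the paper's, and the algebraization step has a genuine gap that I do not see how to close.

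Your Step~1 (formal extension along the thickenings $C_j$ with determinant controlled via the trace splitting) is sound, for the usual reasons: the obstruction lies in an $\Ext^2$ over a $1$-dimensional scheme and vanishes, and the trace projection lets you correct the determinant at each stage. The problem is Step~2. To invoke Beauville--Laszlo gluing along $C$ you need a \emph{specified isomorphism} between $\hat W$ and $W_0$ over the punctured formal neighborhood $\hat X_C\smallsetminus C$; you assert this exists ``after a further prime-to-$n$ extension of $k$'' by ``trivializing the $\mathrm{SL}_n$-torsor of such isomorphisms.'' But the collection of determinant-preserving isomorphisms between two rank-$n$ twisted sheaves is an $\mathrm{SL}_n$-torsor only once you already know the sheaves are (locally) isomorphic; if they are not isomorphic the space is empty, not a torsor. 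Over the $2$-dimensional ring $\hat A[1/f]$ there is no classification of twisted vector bundles by rank and determinant, and there is no reason for $\hat W$ (built from $V$ on $C$) and $W_0$ (an arbitrary global rank-$n$ twisted sheaf) to agree there. A prime-to-$n$ base extension does not make two non-isomorphic sheaves isomorphic. This is precisely where your argument breaks.

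A further red flag: you never use the hypothesis that the Brauer class of $\Xscr$ vanishes on every proper curve in $X$, which the paper uses essentially. Its proof follows de~Jong's \cite{dejong}*{Lemma~5.2} differently: choose $E$ of rank $n$ (index hypothesis), take an auxiliary curve $D$ in a suitable twist of $|L\otimes\det E^\vee|$ meeting $C$ in dimension $0$, and (using the Brauer-vanishing hypothesis to get an invertible twisted sheaf $M$ on $D$) write $V$ as an extension $0\to E(-m)|_C\to V\to M|_{C\cap D}\to 0$. One then shows the extension class lifts from $C$ to a dense open $U\subset X$ with $\dim(X\smallsetminus U)=0$ and $U\cap D$ affine, using local-to-global $\Ext$ and Serre vanishing, and finally takes $W=j_*\tilde V$, which is reflexive hence locally free since $X$ is a regular surface. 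The determinant is then read off from the extension. That construction lifts $V$ directly without ever needing to match two twisted sheaves on a punctured formal neighborhood, which is why it avoids the obstacle your proposal runs into.
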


\begin{proof}
Since $\Xscr$ has index $n$, there is a locally free $\Xscr$-twisted
sheaf $E$ of rank $n$, see
\cite{lieblich}*{Proposition~3.1.2.1(iii)}. Choose an ample line bundle
$\Oscr_X(1)$ on $X$. To prove the Lemma, we will use the following.

  \begin{claim}\label{claim:clam}
  There is an integer $m$, a proper curve $D\subset X$ with $\dim(D
  \cap C) = 0$ in the linear system $|L(mn)\otimes\det(E^\vee)|$, and
  an invertible $\Xscr$-twisted sheaf $M$ on $D$ such that there is an
  exact sequence of $\Xscr_C$-twisted sheaves
    \begin{equation*}
      0\rightarrow E(-m)|_C\rightarrow V\rightarrow M|_{C\cap D}\rightarrow 0.
    \end{equation*}
  \end{claim}

  We prove the lemma first assuming the claim. Let $$\gamma\in\Ext_C^1(M|_{C\cap
    D},E(-m)|_C)$$ be the corresponding (nonzero) extension class. Our goal is to lift
  $\gamma$ to $\Ext_X^1(M,E(-m))$. Given any open subset $U\subseteq X$ containing $C$, there is an
  exact sequence
  \begin{equation*}
    \Ext_U^1(M|_{U\cap D},E(-m)|_U)\rightarrow \Ext_C^1(M|_{C\cap D},E(-m)|_C)\rightarrow\Ext_U^2(M|_U,E(-m)(-C)|_U).
  \end{equation*}
  Now, $$\ShExt_U^0(M|_{U\cap D},E(-m)(-C)|_U)=0$$ since $M|_{U\cap D}$ is a torsion sheaf, while
  $$\ShExt_U^2(M|_{U\cap D},E(-m)(-C)|_U)=0$$ because
  $M|_{U\cap D}$ is a locally free sheaf on a curve in $U$ and hence has cohomological dimension~$1$.
  From the local-to-global ext spectral sequence, it follows that
  \begin{equation*}
    \Ext_U^2(M|_{U\cap D},E(-m)(-C)|_U)\iso\Hoh^1(U,\ShExt_U^1(M|_{U\cap D},E(-m)(-C)|_U)).
  \end{equation*}
  If we further choose $U$ to be such that $U\cap D$ is affine and
  $\dim(X\smallsetminus U)=0$, then this latter group vanishes
  since $\ShExt_U^1(M|_{U\cap D},E(-m)(-C)|_U)$ is supported on $D$ and so
  \begin{equation*}
    \Hoh^1(U,\ShExt_U^1(M|_{U\cap D},E(-m)(-C)|_U))\iso\Hoh^1(U\cap D,\ShExt_U^1(M|_{U\cap D},E(-m)(-C)|_U)|_D)=0,
  \end{equation*}
  by Serre's vanishing theorem for the cohomology of a quasi-coherent
  sheaf on an affine variety. It follows that $\gamma$ lifts to an
  extension
  \begin{equation*}
    0\rightarrow E(-m)|_U\rightarrow \tilde{V}\rightarrow M|_{U\cap D}\rightarrow 0
  \end{equation*}
  on $U$ such that $\tilde{V}|_C\iso V$. Let $W$ be $j_*\tilde{V}$, where $j:U\rightarrow
  X$ is the inclusion. Then, $W$ is reflexive since $X-U$ has codimension $2$.
  By construction it restricts to $V$ on $C$, and since $S$ is regular and $2$-dimensional, $W$ is locally free.

  The determinant of $W$ is
  $$\det(E(-m))\otimes\det(M)\iso\det(E)(-mn)\otimes\det(M).$$ Since $M$ is a
  locally free $\Xscr$-twisted sheaf of rank $1$ on $D$,
  $\det(M)\iso\Oscr_X(D)$ (see~\cite{lieblich}*{Proposition~A.5}).
  But $D$ was chosen to be in the class of the linear system associated to
  $L(mn)\otimes\det(E^\vee)$. It follows immediately that $\det(W)\iso L$, as
  desired.

  Now we prove Claim \ref{claim:clam}. For sufficiently large $m$, a
  general map in $\Hom(E|_C,V(m))$ is injective and has a cokernel
  isomorphic to the pushforward of an invertible $\Xscr$-twisted line bundle
  on a general member of the linear system $|N|_C|$, where
  $N|_C=\det(V(m))\otimes\det(E|_C^{\vee})$. (We suppress the fact
  that $N$ depends on $m$ in the notation.)  This follows
  from~\cite{lieblich-moduli}*{Corollary~3.2.4.21}; if $k$ is finite,
  to use the required Bertini theorem we can take arbitrarily large
  finite prime-to-$n$ extensions of $k$ to ensure the existence of
  rational points avoiding the ``forbidden cone'' (as any open subset
  of affine space over an infinite field contains rational points).
  By assumption, the line bundle $N|_C$ is the restriction of the line
  bundle $N=L(mn)\otimes\det(E^\vee)$ on $X$.  For sufficiently large
  $m$, $N$ is ample, and a general member of $|N|$ restricts to a
  general member of $|N|_C|$. We let $D$ be a general regular member
  of $|N|$ such that $D\cap C$ is the support of an injective map
  $E|_C\rightarrow V(m)$ with cokernel the pushforward of an invertible
  $\Xscr$-twisted sheaf on $D\cap C$. By
  hypothesis, the Brauer class of $\Xscr$ vanishes on $D$, so there is
  an invertible $\Xscr$-twisted sheaf $M$ on $D$. This proves the
  claim.
\end{proof}

Before getting to the main result, we need to extend a standard result
about elementary transformations to the case of a strict normal
crossings scheme.  The case of a regular scheme is handled in
\cite{lieblich}*{Corollary~A.7}.

\begin{definition}
Suppose that $Z$ is an algebraic stack and that $i:W\subset Z$ is a closed substack.
Furthermore, suppose we are given a quasi-coherent sheaf $F$ on $Z$ and a
quotient $q:F|_W\to Q$. The \df{elementary transform} of $F$ along $q$
is defined to be the kernel of the morphism $F\to i_\ast Q$ induced by
the adjunction map and $q$.
\end{definition}

\begin{lemma}\label{lem:elementary}
Let $X$ be a scheme and $C\subset X$ an effective Cartier divisor with
connected component decomposition $C=\sqcup_i C_i$. Suppose that
$\pi:\Xscr\to X$ is a $\m_n$-gerbe and that $E$ is a locally free
$\Xscr$-twisted sheaf. If $q:E|_C\to F$ is a surjection to a
locally free $\Xscr$-twisted sheaf $F$ supported on $C$, then the determinant of the elementary
transform of $E$ along $q$ is isomorphic to
  \[\det(E)\otimes\Oscr_X(-\sum_im_iC_i),\]
where $m_i$ is the rank of $F|_{C_i}$.
\end{lemma}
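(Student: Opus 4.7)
The plan is to identify the ratio $\det(E) \otimes \det(E')^{-1}$ as a line bundle on $X$ by examining the natural inclusion $E' \hookrightarrow E$ in an étale-local model. First I would set up the defining short exact sequence
\begin{equation*}
0 \to E' \to E \to i_\ast F \to 0
\end{equation*}
of coherent $\Xscr$-twisted sheaves. Since $E'$ agrees with $E$ away from $C$, and since $C$ is an effective Cartier divisor with $F$ locally free on $C$, the sheaf $E'$ is locally free of the same rank $r$ as $E$. In particular, $\det(E)$ and $\det(E')$ are $\Xscr$-twisted invertible sheaves of the same $\m_n$-weight $r$, so their ratio $\det(E) \otimes \det(E')^{-1}$ is an honest line bundle on $X$; the lemma amounts to identifying this ratio with $\Oscr_X(\sum_i m_i C_i)$.

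Next I would reduce to the case where $C$ is connected. Because the $C_i$ are pairwise disjoint, the quotient $q$ decomposes as $\bigoplus_i q_i$ and the corresponding elementary transforms can be performed independently; the changes in $\det$ are then additive over $i$. It therefore suffices to treat a single connected component $C = C_1$ with $F$ of rank $m = m_1$.

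The remaining verification is étale-local on $X$. Passing to an étale neighborhood $U \to X$ on which the gerbe $\Xscr|_U$ trivializes, the divisor $C \cap U$ is cut out by a single nonzerodivisor $t$, and both $E$ and $F|_{C \cap U}$ become free of ranks $r$ and $m$ respectively, I can arrange that $q|_U$ is identified with the composite $\Oscr_U^r \twoheadrightarrow \Oscr_{U \cap C}^r \twoheadrightarrow \Oscr_{U \cap C}^m$ of reduction modulo $t$ and projection onto the first $m$ coordinates. A direct calculation then shows $E'|_U$ is the free $\Oscr_U$-module on $t e_1, \dotsc, t e_m, e_{m+1}, \dotsc, e_r$, so the inclusion $E'|_U \hookrightarrow E|_U$ has matrix $\operatorname{diag}(t, \dotsc, t, 1, \dotsc, 1)$ of determinant $t^m$. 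Hence $\det(E')|_U \to \det(E)|_U$ vanishes to order $m$ along $C \cap U$, and $\det(E)|_U \otimes \det(E')|_U^{-1} \cong \Oscr_U(m C)$. Since $\det(E) \otimes \det(E')^{-1}$ is globally defined, these étale-local identifications descend to the desired global isomorphism.

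The principal, and mild, obstacle is simply ensuring that the twisted setting does not interfere with this otherwise classical computation (compare \cite{lieblich}*{Corollary~A.7}); this is handled by the weight-cancellation remark of the first paragraph, which reduces the entire problem to an identification of honest line bundles on $X$ that can then be checked on an étale trivialization of $\Xscr$ by the explicit matrix model above.
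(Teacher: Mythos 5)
Your proof is correct but takes a genuinely different route from the paper. The paper reduces to the untwisted case globally: it pulls back to the Severi--Brauer scheme $P \to X$ associated to $\pi_\ast\ShEnd(E)$, uses the injectivity of $\Pic(X) \to \Pic(P)$ to descend, then picks an invertible $\Xscr$-twisted sheaf $L$ and observes that $\det(i_\ast F \otimes L^\vee) = \det(i_\ast F)$ because $i_\ast F$ has rank zero, which hands the computation to the classical theory of determinants in one stroke. You instead trivialize the gerbe \emph{\'etale-locally}, normalize $q$ to a coordinate projection after shrinking, and compute the inclusion $E' \hookrightarrow E$ explicitly as $\mathrm{diag}(t,\dotsc,t,1,\dotsc,1)$. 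Your weight-cancellation observation (that $\det(E)\otimes\det(E')^{-1}$ has inertia weight zero and thus descends to a line bundle on $X$) is the correct device for globalizing, and it neatly sidesteps the need for a global Brauer-trivialization. Both approaches also prove en route that $E'$ is locally free — the paper by a projective-dimension argument, you by exhibiting the free basis $te_1,\dotsc,te_m,e_{m+1},\dotsc,e_r$ — so this is not a gap, just a different organization.

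One place where your write-up is slightly loose: the closing sentence ``these \'etale-local identifications descend to the desired global isomorphism'' is not quite the right logical move, since any two line bundles are \'etale-locally isomorphic. What actually closes the argument is that $\det$ of the inclusion $E' \hookrightarrow E$ is a single global section of the descended line bundle $\ShHom(\det E', \det E) \cong \det(E)\otimes\det(E')^{-1}$, and the local matrix computation shows this section vanishes to order exactly $m_i$ along $C_i$ and nowhere else; its divisor is therefore $\sum_i m_iC_i$, which yields the isomorphism $\det(E)\otimes\det(E')^{-1}\cong\Oscr_X(\sum_i m_iC_i)$. You have all the ingredients for this; just phrase the final step in terms of the divisor of the global section rather than gluing local isomorphisms.
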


\begin{proof}
In order for the determinant to be well-defined, we need to check that
the subsheaf $G :=\ker(E \to i_*F)$ is perfect when viewed as a
complex of $\Xscr$-twisted $\Oscr_{\Xscr}$-modules with quasi-coherent
cohomology sheaves. In fact, $G$ is locally free. To see this, it
suffices to work smooth-locally and prove the following: let $Z$ be a
scheme, $i:W\hookrightarrow Z$ an effective Cartier divisor, and
$E$ a locally free sheaf on $Z$. Given a locally free sheaf
$F$ on $W$, the kernel of any surjection $E\twoheadrightarrow
i_\ast F$ is locally free. This in turn reduces to the local case,
so we may assume that $Z=\Spec A$, that $W$ is cut out by a single
regular element $a\in A$, and that $E$ and $F$ are free on $Z$
and $W$, respectively. Since $\Oscr_W$ has projective dimension $1$
over $A$, so does $F$. But since $E$ has projective dimension
$0$, it follows that the kernel of any such surjection must also be
projective, hence locally free, as desired.

To prove the Lemma, it is enough to verify that
$\det(i_*F)\iso\Oscr_X(\sum_i m_i C_i)$.  Assume henceforth that $C$
is connected, and hence that $F$ has constant rank, say $m$,
everywhere on~$C$. Pulling back to the Severi--Brauer scheme $P\to X$
associated to the Azumaya algebra $\pi_\ast\ShEnd(E)$ (so that
$\Xscr|_P$ has trivial Brauer class) and using the fact that
$\Pic(X)\to\Pic(P)$ is injective, we are immediately reduced to the
analogous statement for trivial Brauer classes.  Let $L$ be an
invertible $\Xscr$-twisted sheaf.  The classical theory of
determinants tells us that $\det(i_\ast F\tensor
L^\vee)\cong\Oscr(mC)$. But the rank of $i_\ast F$ is $0$, so this
also computes $\det(i_\ast F)$, as desired.
\end{proof}

\begin{proposition}\label{prop:snc}
Let $X$ be a quasi-projective geometrically connected snc surface over
a field $k$ of characteristic $p\geq 0$, and let $\Xscr\rightarrow X$
be a $\m_n$-gerbe, where $n$ is prime to $p$.  Suppose that $\Xscr$
has index $n$ on each irreducible component of $X$ and that the Brauer
class of $\Xscr$ vanishes on each closed subscheme of $X$ of dimension
at most $1$.  (This later condition holds when $k$ is separably closed
or $X$ is proper and $k$ is semi-finite by Lemma~\ref{lem:curves}.)
Then there exists a locally free $\Xscr$-twisted sheaf of rank $n$ and
trivial determinant on $X$.
\end{proposition}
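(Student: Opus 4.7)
The plan is to proceed by induction on the number $r$ of irreducible components of $X$, using Lemma~\ref{lem:extension} to extend twisted sheaves from proper curves to smooth components, and gluing along the double locus.

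For the base case $r=1$, $X$ is smooth and quasi-projective, so the index hypothesis gives a locally free $\Xscr$-twisted sheaf of rank $n$. Choose a smooth proper curve $C \subset X$, for example a general member of a suitable ample linear system (after a finite prime-to-$n$ field extension, if needed, to guarantee smoothness and sufficiently many rational points for Bertini). Since the Brauer class of $\Xscr|_C$ vanishes, there is an invertible $\Xscr$-twisted line bundle $N$ on $C$, and then $V := N \otimes (\Oscr_C^{\oplus(n-1)} \oplus N^{-n})$ is a locally free $\Xscr$-twisted sheaf of rank $n$ on $C$ with $\det V \cong \Oscr_C$. Applying Lemma~\ref{lem:extension} with $L = \Oscr_X$ yields a locally free $\Xscr$-twisted sheaf of rank $n$ on $X$ with trivial determinant.

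For the inductive step, write $X = X' \cup X_r$ where $X_r$ is one irreducible component and $X'$ is the closure of the complement; then $X'$ is an snc surface with $r-1$ components, $X_r$ is a smooth surface, and $C := X' \cap X_r$ is a curve in $X_r$. Both $X'$ (component-wise if $X'$ becomes disconnected) and $X_r$ inherit the hypotheses. By induction there is a locally free $\Xscr|_{X'}$-twisted sheaf $V'$ on $X'$ of rank $n$ with trivial determinant. Because $\det(V'|_C) = \Oscr_C = \Oscr_{X_r}|_C$, Lemma~\ref{lem:extension} applied to $X_r$, $C$, and $V'|_C$ produces a locally free $\Xscr|_{X_r}$-twisted sheaf $V_r$ of rank $n$ with $V_r|_C \cong V'|_C$ and trivial determinant. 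The sheaves $V'$ and $V_r$ now glue along $C$ to a coherent $\Xscr$-twisted sheaf on $X$.

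The main technical obstacle is to check that the glued sheaf is locally free, in particular at the triple points of $X$ where three components meet. Locally at such a point $X$ looks like $\Spec k[x,y,z]/(xyz)$; since $(z) \cap (xy) = (xyz) = 0$ in this ring, the Mayer--Vietoris fibre product of free modules on the two branches $\{z=0\}$ and $\{xy=0\}$ is free over $k[x,y,z]/(xyz)$, giving local freeness at triple strata (and the check at generic points of double curves is easier). A secondary technicality is that Lemma~\ref{lem:extension} requires $C$ to be a proper curve, which is automatic when $X$ is proper; in the purely quasi-projective case one either compactifies appropriately or argues locally around each stratum. Finally, the possible prime-to-$n$ field extensions introduced at each application of Lemma~\ref{lem:extension} must be tracked through the induction, but they do not affect the existence of the desired twisted sheaf.
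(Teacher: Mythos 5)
Your inductive structure (extend via Lemma~\ref{lem:extension} across the double curve, then glue using Milnor patching) matches the paper's proof, and the observation about triple points and the Mayer--Vietoris fibre product is the right local verification behind the paper's invocation of \cite{milnor-kbook}*{Theorem~2.1}. But there are two genuine gaps. First, and most importantly, you never verify that the glued sheaf has trivial determinant, yet your induction requires exactly this. Even when $\det V'$ and $\det V_r$ are both trivial, the determinant of the Milnor fibre product is the line bundle on $X$ classified by $\det(\phi)\in\Hoh^0(C,\Gm)$ where $\phi\colon V'|_C\to V_r|_C$ is the chosen gluing isomorphism; this class lands in the kernel of $\Pic(X)\to\Pic(X')\times\Pic(X_r)$, which is the cokernel of $\Hoh^0(X',\Gm)\times\Hoh^0(X_r,\Gm)\to\Hoh^0(C,\Gm)$ and need not vanish (for instance if $C$ is disconnected). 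Modifying $\phi$ by a scalar only moves $\det(\phi)$ by $n$-th powers, which does not always kill this cokernel. The paper resolves this by a separate device: Lemma~\ref{lem:elementary} on elementary transformations, which shows that \emph{any} locally free $\Xscr$-twisted sheaf of rank $n$ on an snc surface where $\Xscr$ trivializes on curves can be corrected to one with trivial determinant. That lemma is invoked after every gluing step; you cannot dispense with it.

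Second, your base case relies on choosing a smooth \emph{proper} curve $C\subset X$ and then applying Lemma~\ref{lem:extension}. But the proposition is stated for quasi-projective (not necessarily proper) $X$, and a quasi-projective surface may contain no proper curves at all, so the base case as written does not apply in that generality. The paper instead produces the rank-$n$ twisted sheaf in the base case directly from the index hypothesis (existence of an Azumaya maximal order of degree $n$ over a regular surface) and then applies Lemma~\ref{lem:elementary} to trivialize the determinant; this works on any quasi-projective regular surface. Your construction $V=N^{\oplus(n-1)}\oplus N^{\otimes(1-n)}$ on the curve is fine once one has such a curve, but the detour through a curve is both unnecessary and not available in general. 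Replacing your determinant bookkeeping and your base case with the elementary-transformation argument of Lemma~\ref{lem:elementary} would repair both gaps and recover the paper's proof.
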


\begin{proof}
\iffalse By using the theory of elementary transformations of twisted
sheaves on smooth surfaces (and the assumed triviality of $\Xscr$ on
any curve) (see~\cite{lieblich}*{Proposition~3.2.3.4}), we may assume
that $E$ has trivial determinant on $X_1$.  \fi

First, we show that if there exists an $\Xscr$-twisted sheaf $F$ of
rank $n$ on $X$, then $F$ can be chosen to have trivial determinant.
Indeed, for $m \gg 0$, we can assume that $\det(F(m))=\Oscr_{Y}(D)$,
where $D$ is an effective Cartier divisor on $X$. By choosing an
invertible $\Xscr$-twisted sheaf on $D$ (which is possible by the
assumption that the Brauer class of $\Xscr$ vanishes on curves), we
can find an invertible quotient $Q$ of $F(m)|_D$. By
Lemma~\ref{lem:elementary}, the elementary transform of $F(m)$ along
$Q$ has trivial determinant.  Thus, we have constructed a locally free
$\Xscr$-twisted sheaf of rank $n$ on $Y$ with trivial determinant.

  Now we proceed by induction on the number of irreducible components
  of $X$.  If $X$ is irreducible (hence regular), then the existence
  of a locally free $\Xscr$-twisted sheaf $F$ of rank $n$ on $X$
  follows from the fact that $\Xscr$ has index $n$ and the existence
  of Azumaya maximal orders over a regular surface.  By the above, we
  can choose $F$ to have trivial determinant.

  In general, let $X=X_1\cup\cdots\cup X_r$ be the decomposition of
  $X$ into its irreducible components.  Assume that there exists a
  locally free $\Xscr$-twisted sheaf $F$ of rank $n$ on
  $Y=X_1\cup\cdots\cup X_{r-1}$.  Let $C=Y\cap X_{r}$. By the above,
  we can choose $F$ with trivial determinant.  Consequently the
  restriction of $F$ to $C$ has trivial determinant, which coincides
  with the restriction $\Oscr_{X_{r}}|_C$ of the trivial line bundle
  from $X_r$.  Hence by Lemma~\ref{lem:extension}, there exists a
  locally free $\Xscr$-twisted sheaf $F_r$ on $X_r$ such that $F|_C$
  is isomorphic to $F_r|_C$.  Let $E$ be the fiber product of $F$ and
  $F_r$ over their restrictions to $C$ (via the chosen isomorphism) in
  the abelian category of $\Xscr$-twisted sheaves on $X$.  By applying
  \cite{milnor-kbook}*{Theorem~2.1} \'etale-locally, we see that $E$
  is locally free of rank $n$ on $X$, as desired.  By the above, we
  can choose $E$ with trivial determinant.

  By induction, we produce the desired locally free $\Xscr$-twisted
  sheaf on $X$.
\end{proof}

The following corollary, which in particular asserts that index
equals period, may be found in \cite{lieblich}*{Corollary~4.2.2.4}
in the case when $X$ is smooth over a separably closed field.

\begin{corollary}\label{cor:reduced}
  Under the hypotheses of Proposition~\ref{prop:snc}, the map
  \begin{equation*}
    \Hoh^1(X,\PGL_n)\rightarrow\Hoh^2(X,\m_n)
  \end{equation*}
  is surjective.
\end{corollary}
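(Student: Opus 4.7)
The plan is to derive the corollary as a direct translation of Proposition~\ref{prop:snc}. Given a class $\alpha \in \Hoh^2(X,\m_n)$, let $\pi : \Xscr \to X$ be the associated $\m_n$-gerbe. The first step is to recall the standard dictionary between twisted sheaves and Brauer-type cohomology (see \cite{lieblich-moduli}): the boundary map $\Hoh^1(X,\PGL_n) \to \Hoh^2(X,\m_n)$ arising from the short exact sequence $1 \to \m_n \to \operatorname{SL}_n \to \PGL_n \to 1$ sends a $\PGL_n$-torsor, equivalently an Azumaya algebra $A$ of degree $n$, to the obstruction class for lifting $A$ to an $\operatorname{SL}_n$-torsor. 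Under this dictionary, $\alpha$ lies in the image exactly when $\Xscr$ admits a locally free $\Xscr$-twisted sheaf $V$ of rank $n$ whose determinant $\det(V)$ (which is naturally a line bundle on $X$, since weight-$n$ twisted sheaves on a $\m_n$-gerbe descend) is trivial.

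With this reformulation in hand, the second step is to invoke Proposition~\ref{prop:snc} directly: under its hypotheses the proposition produces precisely such a $V$. Setting $A = \pi_\ast \ShEnd(V)$ then yields an Azumaya algebra of degree $n$ with Brauer class $\alpha$, and hence a class in $\Hoh^1(X,\PGL_n)$ mapping to $\alpha$, which establishes surjectivity.

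There is no genuine obstacle; the corollary is really a direct consequence of the proposition. The only point requiring a line of care is the bookkeeping that identifies the image of the $\m_n$-boundary map (as opposed to the $\Gm$-boundary map $\Hoh^1(X,\PGL_n) \to \Hoh^2(X,\Gm)$) with the locus where a rank $n$ twisted sheaf with \emph{trivial} determinant exists, rather than merely some rank $n$ twisted sheaf; this is precisely the discrepancy between the two connecting homomorphisms arising from the compatible sequences $1\to\m_n\to\operatorname{SL}_n\to\PGL_n\to 1$ and $1\to\Gm\to\operatorname{GL}_n\to\PGL_n\to 1$, and accounts for why Proposition~\ref{prop:snc} was stated in the form ``rank $n$ and trivial determinant'' rather than just ``index $n$.''
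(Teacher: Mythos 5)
Your proof is correct and is essentially the same argument the paper makes: apply Proposition~\ref{prop:snc} to produce a locally free $\Xscr$-twisted sheaf of rank $n$ with trivial determinant, and then invoke the standard identification of the image of the $\m_n$-coboundary $\Hoh^1(X,\PGL_n)\to\Hoh^2(X,\m_n)$ with the locus of gerbes admitting such a sheaf. Your remark distinguishing the $\m_n$-coboundary from the $\Gm$-coboundary (so that one genuinely needs the trivial-determinant clause, not just index $n$) is exactly the right point of care; the paper's own proof makes this same point implicitly by noting that an arbitrary rank-$n$ twisted sheaf yields a class off by an element of $\Pic(X)/n\Pic(X)$ and invoking Lemma~\ref{lem:elementary} to correct it by an elementary transformation, which is a mild redundancy once one uses the full strength of Proposition~\ref{prop:snc} as you do.
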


\begin{proof}
  Given a $\m_n$-gerbe $\Xscr\rightarrow X$, the proposition produces
  a locally free $\Xscr$-twisted sheaf $V$ of rank $n$. The determinant of $V$ differs
  from $[\Xscr]$ by a class of $\Pic(X)/n\Pic(X)$. Performing an elementary transformation
  along a suitable effective Cartier divisor corrects the determinant, by
  Lemma~\ref{lem:elementary}.
\end{proof}

\section{Deformation theory of perfect twisted
  sheaves}\label{sec:deformations}
\numberwithin{theorem}{subsection}

\subsection{Generalities}
\label{sec:generalities-defmns}

The material in this section is similar
to~\cite{lieblich-moduli}*{Section~2.2.3}, except our infinitesimal
deformations of the ambient scheme are not assumed to be flat over a
base. We review the theory in this case; there are no essential
differences.

Let $i:X_0\hookrightarrow X$ be a closed subscheme of a
quasi-separated noetherian scheme $X$ defined by a square-zero sheaf
of ideals $I$ of $\Oscr_X$. Let $\pi : \Xscr\to X$ be a
$\m_\ell$-gerbe, write $\Xscr_0 = \Xscr \times_X X_0$ and $\pi_0 :
\Xscr_0 \to X_0$ for the restriction of $\pi$, and write $\iota :
\Xscr_0 \to \Xscr$ for the induced closed immersion. We write
$\Drm_{\qc}^{(1)}(\Xscr)$ for the derived category of $\Xscr$-twisted
sheaves with quasi-coherent cohomology. Let $F_0$ be an object in
$\Drm_{\qc}^{(1)}(\Xscr_0)$.

\begin{definition}
  A \df{deformation} of $F_0$ to $\Xscr$ consists of a complex
  $F$ in $\Drm_{\qc}^{(1)}(\Xscr)$ and a quasi-isomorphism
  $\Oscr_{\Xscr_0}\otimes_{\Oscr_\Xscr}^{\Lrm}F\we F_0$.
\end{definition}

For convenience, we write $I \otimes^\Lrm F$ for the complex of
$\Xscr$-twisted sheaves $\pi^\ast I \otimes_{\Oscr_{\Xscr}}^\Lrm F$
and $I \otimes^\Lrm F_0$ for the complex of $\Xscr_0$-twisted sheaves
$\pi_0^\ast i^\ast I \otimes_{\Oscr_{\Xscr_0}}^\Lrm F_0$.

\begin{lemma}
  If $F_0$ is perfect and $F$ is a deformation of $F_0$ to $\Xscr$,
  then $F$ is perfect.
\end{lemma}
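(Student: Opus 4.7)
The plan is to reduce the statement to the analogous (untwisted) fact that perfectness descends along nilpotent closed immersions. Since being perfect is local in the smooth (and in the \'etale) topology, and since a $\m_\ell$-gerbe $\Xscr\to X$ is \'etale-locally on $X$ of the form $B\m_\ell\times U$ (so that $\Xscr$-twisted sheaves on the cover are, after choosing a trivialization, equivalent to ordinary $\Oscr_U$-modules in a specific weight), it is enough to prove the following: if $i:X_0\hookrightarrow X$ is the square-zero closed immersion cut out by $I$ and $F\in\Drm_{\qc}(X)$ satisfies $\Lrm i^\ast F\simeq F_0$ with $F_0$ perfect, then $F$ is perfect. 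The gerbe plays no essential role beyond this reduction.

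Next, recall that $F$ is perfect if and only if it is pseudo-coherent and has finite Tor-amplitude, and both properties descend from $F_0$ to $F$ through the nilpotent thickening $X_0\hookrightarrow X$. I would treat the two properties separately. For finite Tor-amplitude, suppose $F_0$ has Tor-amplitude in $[a,b]$ over $\Oscr_{X_0}$. The deformation hypothesis gives, for every $\Oscr_{X_0}$-module $M$, the identification
\[
F\otimes^\Lrm_{\Oscr_X}M \;\simeq\; F_0\otimes^\Lrm_{\Oscr_{X_0}}M,
\]
so the left side is concentrated in degrees $[a,b]$. For an arbitrary $\Oscr_X$-module $N$, apply $F\otimes^\Lrm_{\Oscr_X}(-)$ to the short exact sequence $0\to IN\to N\to N/IN\to 0$; since $I^2=0$, both $IN$ and $N/IN$ are $\Oscr_{X_0}$-modules, so the outer two terms of the resulting triangle are in $[a,b]$, whence so is $F\otimes^\Lrm_{\Oscr_X}N$. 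Therefore $F$ has Tor-amplitude in $[a,b]$, and in particular is bounded.

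For pseudo-coherence, I would invoke the standard result that pseudo-coherence lifts along a closed immersion defined by a nilpotent ideal: by iteratively solving the obstruction problem of lifting a finite free resolution of $F_0$ (the obstruction at each stage lying in an Ext group that can be killed by restricting the resolution further) one builds, locally on $X$, a pseudo-coherent model for $F$. This is the content of the untwisted infinitesimal lifting results used in~\cite{lieblich-moduli}*{Section~2.2.3}, and applies verbatim here because the gerbe has already been trivialized locally.

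The main obstacle is really only cosmetic: namely, bookkeeping the twisted structure under the local trivializations of $\Xscr$ to make sure that \'etale descent genuinely recovers perfectness of $F$ as a twisted complex. Once this is set up, finite Tor-amplitude is a two-line argument from the square-zero hypothesis and pseudo-coherence is a citation, so no new ideas beyond the classical nilpotent-thickening argument are required.
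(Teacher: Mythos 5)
Your argument for finite Tor-amplitude is correct, and it is actually a cleaner route than the paper's: given that $F\otimes^{\Lrm}_{\Oscr_X}\iota_*M\simeq\iota_*(F_0\otimes^{\Lrm}_{\Oscr_{X_0}}M)$ for every $\Oscr_{X_0}$-module $M$ (projection formula for the affine morphism $\iota$), the filtration $0\to IN\to N\to N/IN\to 0$ of an arbitrary $\Oscr_X$-module $N$ by $\Oscr_{X_0}$-modules immediately gives $F\otimes^{\Lrm}N$ in degrees $[a,b]$. The paper instead reduces to residue fields of closed points, which is a bit more delicate; your version avoids that. The \'etale-local trivialization of the gerbe is also fine and is essentially what the paper does.

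The pseudo-coherence step, however, has a genuine gap. You propose to ``lift a finite free resolution of $F_0$'' through the square-zero thickening by obstruction theory; but that is the machinery for \emph{constructing} a deformation, not for showing that the deformation $F$ you were \emph{handed} is pseudo-coherent. Deformations of a fixed $F_0$ are not unique (they form a torsor under an $\Ext^1$ group), so even if the lifting succeeded it would produce some pseudo-coherent complex $F'$ with $\Lrm\iota^*F'\simeq F_0$, with no reason for $F'$ to be isomorphic to the given $F$. There is also no general result that pseudo-coherence of $\Lrm\iota^*F$ forces pseudo-coherence of $F$ for a nilpotent closed immersion, absent some control on $F$ itself. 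What you actually need here is the distinguished triangle
\[
I\otimes^{\Lrm}F\longrightarrow F\longrightarrow \iota_*F_0,
\]
combined with the Tor-amplitude bound you have just proved: once $F$ has finite Tor-amplitude one can apply the projection formula (and \cite{thomason-trobaugh}*{Theorem~2.5.5}) to identify $I\otimes^{\Lrm}F$ with $\iota_*(I\otimes^{\Lrm}_{\Oscr_{X_0}}F_0)$, which has coherent cohomology because $F_0$ is perfect and $I$ is a coherent $\Oscr_{X_0}$-module. Since $\iota_*F_0$ also has coherent cohomology, the long exact sequence of the triangle forces $F$ to have coherent cohomology; together with bounded Tor-amplitude this gives perfectness on a noetherian scheme. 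This is exactly how the paper closes the argument, and it is the step your proposal is missing.
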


\begin{proof}
Note that there is a distinguished triangle $I
\otimes^{\mathrm{L}}F \rightarrow F \rightarrow \iota_*
F_0$ in $\Drm_{\qc}^{(1)}(\Xscr)$.  If we prove that $F$ has finite
Tor-amplitude, then it will follow
from~\cite{thomason-trobaugh}*{Theorem~2.5.5} that
$I\otimes^{\mathrm{L}}F$ is quasi-isomorphic to
\[
\iota_*\left(I\otimes^{\mathrm{L}}\Lrm \iota^* F\right)\we
    \iota_*(I\otimes F_0).
\]
On a quasi-separated noetherian scheme $Y$ the perfect complexes of
$\Oscr_Y$-modules are precisely those complexes which have coherent
cohomology sheaves and which moreover have bounded
Tor-amplitude (see~\cite{thomason-trobaugh}*{Example~2.2.8 and
Proposition~2.2.12}). Thus, choosing an \'etale covering of $X$
splitting $\Xscr\to X$, we see that the same holds for complexes on
$\Xscr$.  Since $\iota_*F_0$ and $\iota_*(I\otimes F_0)$ have
coherent cohomology sheaves, it follows that if we show that $F$ has
finite Tor-amplitude, the lemma will follow.

    Recall that a complex $F$ of $\Drm_{\qc}^{(1)}(\Xscr)$ has Tor-amplitude
    contained in an interval with integer endpoints $[a,b]$ if and only if
    $\ShTor_n^{\Oscr_\Xscr}(G,F)=0$ for all $\Oscr_\Xscr$-modules
    $G$ and all $n\notin[a,b]$. Suppose that $F_0$ has Tor-amplitude
    contained in $[a,b]$. Suppose that $n\in\ZZ$, and suppose that
    $\ShTor_n^{\Oscr_\Xscr}(G,F)$ is not zero. Then, there is a closed
    point $x$ of $X$ such that $\ShTor_n^{\Oscr_\Xscr}(k(x),F)$ is not zero.
    But
    $$F\otimes_{\Oscr_\Xscr}^{\Lrm}k(x)\we F
    \otimes_{\Oscr_\Xscr}^{\Lrm}(\Oscr_{\Xscr_0}\otimes_{\Oscr_{\Xscr_0}}^{\Lrm}k(x))\we
    F_0\otimes_{\Oscr_{\Xscr_0}}^{\Lrm}k(x).$$
    Hence,
    $$\ShTor_n^{\Oscr_\Xscr}(k(x),F)\iso\ShTor_n^{\Oscr_{\Xscr_0}}(k(x),F_0),$$
    which implies that $n\in[a,b]$, as desired.
\end{proof}

\begin{definition}
  Recall that there is an essentially unique \emph{determinant\/} functor
  $$\det:\Perf(\Xscr)\to\Pic(\Xscr)$$
  that associates to each perfect complex of $\Xscr$-twisted sheaves
  an invertible sheaf. Given a perfect complex $F\in\Perf(\Xscr)$ and
  an fppf cover $Y\to\Xscr$ over which $F\we C^\bullet$, a finite
  complex of locally free sheaves, the determinant is computed
  as $$\det(F)=\bigotimes_{n\in\ZZ}\det(C^n)^{(-1)^n}.$$ (See
  \cite{MR0437541}*{Theorem~2},
  \cite{lieblich-moduli}*{Definition~2.2.4.1}.)
\end{definition}

\begin{definition}
  Suppose that $F$ has $\det(F_0)\iso\Oscr_{\Xscr_0}$, and fix one such
  trivialization. An \df{equideterminantal deformation} of $F_0$ is a
  deformation $F$ as above together with a deformation of the
  trivialization of the determinant.
\end{definition}

The next proposition is well-known to experts and follows immediately
from the techniques in~\cite{illusie}, cf.\
\cite{lieblich-moduli}*{Proposition~2.2.4.9}.

\begin{proposition}\label{prop:deformations}
Let $X_0\hookrightarrow X$ be a closed subscheme of a quasi-separated
noetherian scheme $X$ defined by a square-zero sheaf of ideals $I$ of
$\Oscr_X$. Fix a $\m_\ell$-gerbe $\Xscr\to X$, and let
$\Xscr_0=\Xscr\times_X X_0$. Suppose $F_0$ is a perfect complex of
$\Xscr_0$-twisted sheaves. Then the obstruction to
the existence of an equideterminantal deformation of $F_0$ to $\Xscr$
lies in
$$\ShHoh^2(X_0,I\otimes^{\Lrm}\sRShEnd(F_0)),$$ where
$\sRShEnd(F_0)$ denotes the trace zero part of the complex of
endomorphism sheaves.
\end{proposition}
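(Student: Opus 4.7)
The plan is to follow Illusie's deformation theory of perfect complexes (as in~\cite{illusie} and as exposed in~\cite{lieblich-moduli}*{Section~2.2.4}), adapted to the $\Xscr$-twisted setting, and then to peel off the determinantal direction using the trace map.

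First, I would reduce to the untwisted obstruction theory. A $\m_\ell$-gerbe trivializes after étale base change on $X$, so over an étale chart $U\to X$ that splits $\Xscr$, the twisted complex $F_0$ becomes an honest perfect complex of $\Oscr_{U_0}$-modules and Illusie's cotangent-complex obstruction theory applies verbatim. The resulting obstructions glue to a canonical class
\[
\mathrm{ob}(F_0)\in\Ext^2_{\Xscr_0}\!\bigl(F_0,\;F_0\otimes^{\Lrm}\pi_0^{\ast}I\bigr)
\]
whose vanishing is necessary and sufficient for the existence of a (not-yet-determinantal) deformation of $F_0$ to $\Xscr$. Since the $\m_\ell$-weights cancel, $\mathbf{R}\ShEnd(F_0)$ is an untwisted perfect complex on $X_0$, and the adjunction along $\pi_0\colon\Xscr_0\to X_0$ identifies this Ext-group with
\[
\ShHoh^2\!\bigl(X_0,\;I\otimes^{\Lrm}\mathbf{R}\ShEnd(F_0)\bigr).
\]

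Second, I would isolate the determinantal piece. Differentiating the determinant functor $\det\colon\Perf(\Xscr_0)\to\Pic(\Xscr_0)$ produces a trace morphism $\mathrm{tr}\colon\mathbf{R}\ShEnd(F_0)\to\Oscr_{X_0}$ whose fiber is, by definition, $\sRShEnd(F_0)$. Applying $\ShHoh^2(X_0,I\otimes^{\Lrm}-)$ to the distinguished triangle
\[
\sRShEnd(F_0)\longrightarrow\mathbf{R}\ShEnd(F_0)\xrightarrow{\mathrm{tr}}\Oscr_{X_0}\xrightarrow{+1}
\]
produces a long exact sequence in which the image of $\mathrm{ob}(F_0)$ under the map induced by $\mathrm{tr}$ is precisely the Illusie obstruction to deforming $\det(F_0)\iso\Oscr_{\Xscr_0}$ as an invertible sheaf. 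Because the trivial invertible sheaf admits its tautological trivial deformation, this trace-image vanishes, and the vanishing is canonical once the trivialization of $\det(F_0)$ has been fixed. Exactness then lifts $\mathrm{ob}(F_0)$ uniquely to the asserted group $\ShHoh^2(X_0,\;I\otimes^{\Lrm}\sRShEnd(F_0))$, and this lift is the obstruction governing equideterminantal deformations: its vanishing is equivalent to the simultaneous existence of a deformation $F$ and a compatible deformation of the chosen determinant trivialization.

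The main subtlety — the one the paper defers to ``the techniques of~\cite{illusie}'' — is the compatibility of $\mathrm{tr}(\mathrm{ob}(F_0))$ with the determinantal obstruction at the derived level. I would verify this on local models where $F_0$ is represented by a bounded complex of locally free sheaves, so that both the trace map and Illusie's obstruction admit explicit chain-level descriptions via the Atiyah class, and then descend along an étale cover trivializing $\Xscr$. The descent is formal because both constructions are functorial in $F_0$ and compatible with base change along $\Xscr_0\to X_0$, which is the only genuinely new ingredient beyond the classical untwisted case.
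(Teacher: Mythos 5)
Your proposal is correct and follows essentially the route the paper itself takes, which is simply to invoke Illusie's obstruction theory for perfect complexes together with the trace map, cf.\ \cite{lieblich-moduli}*{Proposition~2.2.4.9}; the twisted-to-untwisted reduction via an \'etale splitting of the gerbe and the weight-cancellation making $\mathbf{R}\ShEnd(F_0)$ untwisted are exactly the intended ingredients. The only small caveat is that the lift of $\mathrm{ob}(F_0)$ along the triangle $\sRShEnd(F_0)\to\mathbf{R}\ShEnd(F_0)\xrightarrow{\mathrm{tr}}\Oscr_{X_0}$ is not unique (the ambiguity is a coset of the image of $\ShHoh^1(X_0,I)$), so the cleaner formulation is Illusie's direct construction of the equideterminantal obstruction class in the traceless hypercohomology via the projected Atiyah class, which your chain-level verification on local models would in any case reproduce.
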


\begin{remark}
    In Proposition~\ref{prop:deformations}, when the rank of $F_0$ is invertible on
$X_0$, we can compute
$\ShHoh^2(X_0,I\otimes^{\Lrm}\sRShEnd(F_0))$
as $\Ext_{\Xscr_0}^2(F_0, I\otimes F_0)_0$, the kernel of the trace map
  $$\Ext_{\Xscr_0}^2(F_0,I\otimes F_0)\to\Hoh^2(\Xscr_0,I)$$
  on cohomology. Indeed, in this case the trace map
  $\RShEnd(F_0)\to\Oscr_{\Xscr_0}$ splits because the composition
  $$\Oscr_{\Xscr_0}\to \RShEnd(F_0)\to\Oscr_{\Xscr_0}$$
  is multiplication by the rank of $F_0$ (see \cite{lieblich-moduli}*{Lemma 2.2.4.5}).
\end{remark}

\subsection{Fracking}\label{sec:fracking}

In the next two sections we describe a standard trick in deformation
theory that kills obstructions in dimension $2$. Because the general
local tool we use roughly corresponds to ``punching holes'' in a
sheaf, we call this \textit{fracking}.

Let $X$ be a locally noetherian scheme, $\pi:\Xscr\to X$ a $\G$-gerbe for some closed
subgroup $\G\subset\G_m$, and $F$ a locally free $\Xscr$-twisted sheaf
of finite rank. Let $i:\Spec K\to X$ denote a closed immersion whose
image is a regular point $x$ of $X$, where $K$ is a field. We will write
$\Xscr_0=\Xscr\times_X\Spec K$, we will let $\pi_0:\Xscr_0\to\Spec K$
denote the restriction of $\pi$, and we will let $\iota:\Xscr_0\to\Xscr$ denote the natural closed immersion.

Given two $\Oscr_X$-modules $M$ and $N$, there is a trace map
\[
\Hom(M\tensor F, N\tensor F)\to\Hom(M,N)
\]
induced by the isomorphism
\[
\ShHom(M\tensor F,N\tensor F)\iso\ShHom(M,N)\tensor\ShHom(F,F)
\]
and the usual trace map. We will let $\Hom(M\tensor F,N\tensor
F)_0\subset\Hom(M\tensor F,N\tensor F)$ denote the kernel of this
trace map.

\begin{lemma}[Fracking Lemma]\label{lem:frak}
  With the above notation, suppose that
  \begin{enumerate}
  \item $\dim\Oscr_{X,x}\geq 2$;
  \item the rank of $F$ is prime to the characteristic of $K$;
  \item $M$ and $N$ are invertible in a neighborhood of $x$;
  \item the class of $\Xscr_0$ in $\Br(K)$ is trivial;
  \item $f$ is an element of $\Hom(M\tensor F,N\tensor F)_0$ whose image in
  $\Hom(\iota^\ast(M\tensor F),\iota^\ast(N\tensor F))_0$ is non-zero;
  \end{enumerate}
  Then there exists a locally free $\Xscr_0$-twisted sheaf $Q$ of rank
  $\operatorname{rk}(F)-1$ and a surjection $\iota^\ast F \to Q$ such
  that, writing $G$ for the kernel of the adjoint map $F\to \iota_\ast
  Q$, the endomorphism $f$ is not in the image of the natural
  inclusion
\[
\rho:\Hom(M\tensor G,N\tensor G)_0\rightarrow\Hom(M\tensor F,N\tensor F)_0
\]
induced by the canonical isomorphism $G^{\vee\vee}\to F$.
\end{lemma}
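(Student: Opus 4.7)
My plan is to reduce the statement to a pure linear-algebra question at the fiber of $\Xscr$ over $x$, perform an elementary-transform construction that witnesses the desired obstruction, and then verify the non-factorization by a stalk-level calculation on $\Oscr_{X,x}$-modules. Hypothesis (4) is the toehold: because $\Xscr_0$ has trivial Brauer class, we can pick an invertible $\Xscr_0$-twisted sheaf $L$, and then $\bullet\otimes L^{-1}$ turns $\Xscr_0$-twisted coherent sheaves into honest $K$-vector spaces. In particular, $V := \iota^\ast F\otimes L^{-1}$ is a $K$-vector space of dimension $n=\rk F$, and $\iota^\ast F \cong L\otimes_K V$. After trivializing the invertible sheaves $\iota^\ast M$ and $\iota^\ast N$, the restriction $\bar f := \iota^\ast f$ becomes an element of $\End_K(V)$ that is trace-zero and nonzero (by hypothesis (5)). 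The rank-prime-to-$\chr K$ hypothesis (2) ensures that the composition $K \xrightarrow{\lambda\mapsto \lambda\cdot\mathrm{id}} \End_K(V)\xrightarrow{\mathrm{tr}} K$ is multiplication by $n$, which is a unit; hence no nonzero scalar is trace-zero and $\bar f$ is \emph{not} a scalar.

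\textbf{Constructing $Q$.} A non-scalar endomorphism of a finite-dimensional vector space cannot preserve every line (if it did, the eigenvalue assignment on lines would force it to be scalar). Pick therefore a $K$-line $W\subset V$ with $\bar f(W)\not\subset W$ (concretely, take $W=K\cdot v$ for any $v$ such that $v$ and $\bar f(v)$ are linearly independent). Set
\[
Q \;:=\; L\otimes_K (V/W),
\]
which is a locally free $\Xscr_0$-twisted sheaf of rank $n-1$, with canonical surjection $\iota^\ast F = L\otimes V \twoheadrightarrow L\otimes (V/W)=Q$. Let $G=\ker(F\to\iota_\ast Q)$ be the elementary transform. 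Since $x$ has codimension at least $2$ in the regular scheme $X$ by (1), the quotient $\iota_\ast Q$ is supported in codimension $\geq 2$, so the natural map $G^{\vee\vee}\to F$ is an isomorphism (both sides are reflexive on the regular scheme $X$ and agree on the punctured neighborhood of $x$), which is exactly what is needed for $\rho$ to make sense.

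\textbf{Verifying non-liftability.} The main (and really the only substantive) step is to show that $f$ does not lie in the image of $\rho$. Equivalently, $f$ should not send $M\otimes G$ into $N\otimes G$ as subsheaves of $M\otimes F$ and $N\otimes F$. This is a local question at $x$, so after passing to $R=\Oscr_{X,x}$ (or its strict henselization to trivialize the gerbe and $L$), write $F_x\cong R^n$ and $M_x\cong N_x\cong R$, with maximal ideal $\mathfrak m$ and residue field $K$. Unwinding the definition, $G_x$ is the preimage of $W$ under $R^n\twoheadrightarrow V$, namely $G_x = R\tilde w + \mathfrak m R^n$ where $\tilde w\in R^n$ lifts a generator of $W\subset V$. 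Any $R$-linear endomorphism $\phi:R^n\to R^n$ sends $\mathfrak m R^n$ into $\mathfrak m R^n\subset G_x$ automatically, so $\phi(G_x)\subset G_x$ if and only if $\bar\phi(W)\subset W$ in $V$, where $\bar\phi$ is the reduction modulo $\mathfrak m$. Applied to $f$, this gives exactly the criterion $\bar f(W)\subset W$, which we arranged to fail. Hence $f\notin\mathrm{Im}(\rho)$, as required.

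\textbf{Main obstacle.} The conceptual content is concentrated in two places: (i) justifying that the reflexive-hull identification $G^{\vee\vee}\cong F$ holds so that $\rho$ is well-defined (this is where the hypothesis $\dim\Oscr_{X,x}\geq 2$ is consumed), and (ii) the clean reduction of the preservation condition on $G$ to the preservation of a single line $W$ in the fiber $V$. The latter is the actual ``fracking'' idea; everything else is bookkeeping to translate between $\Xscr_0$-twisted sheaves on the gerbe and $K$-vector spaces, which the trivial Brauer class of $\Xscr_0$ makes straightforward.
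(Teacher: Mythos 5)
Your proof is correct and follows essentially the same strategy as the paper's: trivialize the gerbe via an invertible $\Xscr_0$-twisted sheaf to convert $\iota^\ast F$ into a $K$-vector space $V$, use trace-zero plus the invertibility of $\rk F$ to see $\bar f$ is non-scalar, pick a line $W\subset V$ not preserved by $\bar f$, and elementary-transform $F$ along the quotient by $W$. The only real difference is in the last step: the paper identifies the image of $\rho$ with a subgroup $B$ of endomorphisms whose fiber at $x$ preserves the flag $\overline L\subset\overline F$ and exhibits $f\notin B$ via an explicit three-term exact sequence, whereas you verify the equivalent condition $f(M\otimes G)\not\subset N\otimes G$ by a direct stalk computation ($G_x = R\tilde w + \mathfrak m R^n$, so preservation of $G_x$ is exactly preservation of $W$ in the fiber). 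Your stalk argument is a little more elementary and explicit, but it establishes the same content, so this is a presentational difference rather than a different route. One small imprecision worth fixing: you pass to the ``strict henselization'' but then work with residue field $K$ (the residue field of the strict henselization is $K^{\mathrm{sep}}$); you want the (non-strict) henselization, where $\Br(\Oscr_{X,x}^h)\hookrightarrow\Br(K)$ ensures hypothesis (4) trivializes the gerbe, and faithful flatness of $\Oscr_{X,x}\to\Oscr_{X,x}^h$ is what lets you check the submodule containment there; alternatively base change the line $W$ to $K^{\mathrm{sep}}$ if you do use the strict henselization.
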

\begin{proof}
  Since $\Xscr_0$ has trivial Brauer class, we can choose an
invertible $\Xscr_0$-twisted sheaf~$\Lambda$; the sheaf $\Lambda$ is
unique up to non-unique isomorphism. Define two functors
$$\tau_\dagger:\Coh^{(1)}(\Xscr_0)\to\Coh(\Spec K)$$
and
$$\tau^\dagger:\Coh(\Spec K)\to\Coh^{(1)}(\Xscr_0)$$
by the formulas $\tau_\dagger(A)=(\pi_0)_\ast(A\tensor\Lambda^\vee)$
and $\tau^\dagger(B)=\Lambda\tensor\pi_0^\ast B$. It follows from
these formulas and the basic theory of twisted sheaves that
$\tau^\dagger$ and $\tau_\dagger$ are essentially inverse
equivalences. Given a quasi-coherent sheaf $M$ on $X$ and an object
$A\in\Coh^{(1)}(\Xscr_0)$,
there is a natural
isomorphism \numberwithin{equation}{theorem}
\begin{equation}\label{eq:eq}
  \tau_\dagger(\iota^\ast\pi^\ast M\tensor A)\iso i^\ast M\tensor\tau_\dagger(A).
\end{equation}

Write $\overline F=\tau_\dagger(\iota^\ast F)$. By equation \eqref{eq:eq} and the
fact that $i^\ast M$ and $i^\ast N$ are $1$-dimensional $K$-vector spaces, we can
transport $\iota^\ast f$ to a non-trivial traceless
homomorphism
$$\overline f:i^\ast M\tensor \overline F\to
i^\ast N\tensor \overline F$$
of $K$-vector spaces of the same (finite) dimension, see
\cite{lieblich}*{Theorem~3.1.1.11}.  Because the nonzero $\overline f$
has trace zero and all nonzero scalar matrices have nonzero trace (by
the assumption that $F$ has rank prime to the characteristic of $K$),
there is a line $\overline{L}$ in $\overline{F}$ such that $\overline
f$ does not preserve $\overline{L}$ (i.e., $\overline f(i^\ast
M\tensor \overline{L})$ is not contained in $i^\ast
N\tensor\overline{L}$).

Let $Q$ be the $\Xscr_0$-twisted sheaf
$\pi^\dagger\overline{F}/\pi^\dagger\overline{L}$ and $\sigma: F\to
\iota_\ast Q$ the adjoint of the natural surjection. Write $G$ for the
kernel of $\sigma$ and $\gamma:G\to F$ for the inclusion. Note that
the canonical map $G^{\vee\vee}\to F$ is an isomorphism. (Here we use
that $F$ is locally free, hence $G$ is locally free away from $x$, and
that $x$ itself is a regular point.)  Since $M$ and $N$ are invertible
near $x$, it follows that there is an induced canonical inclusion
$$\rho:\Hom(M\tensor G,N\tensor G)_0\hookrightarrow\Hom(M\tensor F,N\tensor F)_0.$$
(The one subtle point is the
preservation of the trace zero
condition. This follows since $M$ and $N$ are invertible near $x$ and
$F$ is locally free, so the traceless condition can be detected on the
punctured neighborhood of $x$.)

This canonical inclusion has the property that for any
$s\in\Hom(M\tensor G,N\tensor G)$ we have a commuting diagram
$$
% \begin{tikzcd}
%   M\tensor G\ar[r, "s"]\ar[d, "M\tensor\gamma"'] & N\tensor G\ar[d,
%   "N\tensor\gamma"] \\
%   M\tensor F\ar[r, "\rho(s)"'] & N\tensor F.
% \end{tikzcd}
\xymatrix{
M \tensor G \ar[r]^s \ar[d]_{\id_M \tensor \,\gamma} & N \tensor G \ar[d]^{\id_N \tensor \,\gamma}\\ 
M \tensor F \ar[r]_{\rho(s)} & N \tensor F. 
}
$$
It follows that the image of $\rho$ lies in the subgroup $B$ of
$\Hom(M\tensor F,N\tensor F)_0$ of those trace zero
endomorphisms whose restrictions to the fiber over $x$ map the flag
$i^\ast M\tensor\pi^\dagger\overline{L}\subseteq i^\ast M\tensor\pi^\dagger\overline{F}$
into the flag $i^\ast N\tensor\pi^\dagger\overline{L}\subseteq i^\ast
N\tensor\pi^\dagger\overline{F}$. On the other
  hand, there is an exact sequence
  $$0\rightarrow
  B\rightarrow\Hom(M\tensor F,N\tensor F)_0\rightarrow\Hom\left(M\tensor
  \pi^\dagger\overline{L},N\tensor\left(\pi^\dagger\overline{F}/\pi^\dagger\overline{L}\right)\right).$$
  Since the endomorphism $f$ we started with is nonzero on the right,
  it is not contained in $B$, and hence is not in the image of $\rho$, as
  desired.
\end{proof}

\subsection{Removing global obstructions by fracking}
\label{sec:remove-frack}

In this section, we explain how to use Lemma \ref{lem:frak} to produce
unobstructed twisted subsheaves in dimension 2.

\begin{situation}\label{sit:frack}
Suppose that $X$ is a proper Gorenstein surface over a field $k$ that is
either semi-finite or separably closed, $\Xscr\to X$ is a
$\m_\ell$-gerbe, $F$ is a perfect coherent $\Xscr$-twisted sheaf
whose rank is invertible in $k$, and $M$ is a coherent sheaf on $X$
that is the pushforward of an invertible sheaf on a closed subscheme
$X'$ of $X$ that contains a nonempty open subscheme $U\subset X$ (for
example, $M$ could be an invertible sheaf on a component of $X$).
\end{situation}
In Situation \ref{sit:frack}, there are two trace maps
$$\Hom(M\tensor F,\omega_X\tensor F)\to\Gamma(X,M)$$
and
$$\Ext_{\Xscr}^2(F, M\tensor F)\to\Gamma(X,M).$$
Via Serre duality, there is an isomorphism of trace zero subspaces
$$\Ext_{\Xscr}^2(F,M\tensor F)_0=\Hom(M\tensor F,\omega_X\tensor F)_0^\vee.$$

\begin{proposition}\label{prop:frack-attack}
In Situation \ref{sit:frack}, there is an $\Xscr$-twisted subsheaf
$G\subset F$ such that
  \begin{enumerate}
  \item $F/G$ is supported at finitely many regular closed points of $X$ whose
  residue fields are separable extensions of $k$, and
  \item $\Ext_{\Xscr}^2(G,M\tensor G)_0=0$.
  \end{enumerate}
\end{proposition}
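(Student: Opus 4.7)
My plan is to iteratively apply the Fracking Lemma (Lemma~\ref{lem:frak}) until the obstruction vanishes. Since $X$ is a proper Gorenstein surface and $F$ is perfect with rank invertible in $k$, Serre duality provides a canonical isomorphism
\[
\Ext_{\Xscr}^2(F, M\otimes F)_0 \iso \Hom_{\Xscr}(M\otimes F, \omega_X\otimes F)_0^\vee,
\]
so it suffices to produce $G \subset F$ satisfying (1) with $\Hom_{\Xscr}(M\otimes G, \omega_X\otimes G)_0 = 0$. Both sides are finite-dimensional over $k$ because $X$ is proper, and the argument will proceed by induction on the dimension of this Hom space.

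For the inductive step, assume $\Hom_{\Xscr}(M\otimes F, \omega_X\otimes F)_0 \neq 0$ and pick a nonzero element $f$. I would then choose a closed point $x \in X$ satisfying all of the following: $x$ lies in the open subscheme $U \subset X'$ on which $M$ is invertible; $x$ is a regular point of $X$ (so $\dim\Oscr_{X,x}=2$); $\omega_X$ is invertible near $x$ (automatic since $X$ is Gorenstein); the class of $\Xscr\times_X\Spec\kappa(x)$ in $\Br(\kappa(x))$ is trivial; and the restriction of $f$ to the fiber over $x$ is nonzero. The residue field $\kappa(x)$ is separable over $k$ because $k$ is perfect, and $\Br(\kappa(x))$ vanishes because $\kappa(x)$ is a finite extension of a separably closed or semi-finite field (so either separably closed or itself semi-finite, and the proof of Lemma~\ref{lem:curves} applies). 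The nonvanishing of $f|_x$ is an open condition on the locus where $F$ is locally free, so such an $x$ exists; moreover, since $F$ is perfect we may freely pass to regular points avoiding the finitely many closed points over which $F$ fails to be locally free.

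Applying Lemma~\ref{lem:frak} to the data $(F, M, \omega_X, f)$ at $x$ produces an $\Xscr$-twisted subsheaf $G \subset F$ whose cokernel is supported at $x$ (it is the pushforward of a locally free $\Xscr_0$-twisted sheaf of rank $\operatorname{rk}(F)-1$ on $\Spec\kappa(x)$), together with an injection
\[
\rho\colon \Hom_{\Xscr}(M\otimes G, \omega_X\otimes G)_0 \hookrightarrow \Hom_{\Xscr}(M\otimes F, \omega_X\otimes F)_0
\]
whose image does not contain $f$. Hence the dimension of the Hom space strictly drops. Iterating this construction, the process terminates in finitely many steps with a subsheaf $G \subset F$ satisfying (2). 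Since each step has cokernel supported at a single regular closed point with separable residue field, the total quotient $F/G$ is a successive extension of such sheaves, so it is supported on a finite set of regular closed points of $X$ with separable residue fields, giving (1).

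The main subtleties I anticipate are: ensuring that $\rho$ really lands in the traceless part (handled inside Lemma~\ref{lem:frak} using the invertibility of $M$ and $\omega_X$ near $x$ and the local freeness of $F$ off a finite set); verifying that at each stage the intermediate sheaf remains perfect and locally free on a dense open, so that the next application of the Fracking Lemma is legal; and arranging that the chosen points exhaust only a finite subset of $U$ at which all hypotheses can be met simultaneously. The Gorenstein hypothesis is used exclusively through Serre duality and the invertibility of $\omega_X$, while the assumption that $k$ is semi-finite or separably closed enters only to guarantee the triviality of $\Br(\kappa(x))$ needed to apply Lemma~\ref{lem:frak}.
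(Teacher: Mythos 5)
Your proof is correct and follows essentially the same strategy as the paper's: use Serre duality to convert the $\Ext^2$ obstruction space to a space of trace-zero morphisms $M\otimes F\to\omega_X\otimes F$, choose a nonzero element $f$ and a well-placed regular closed point $x$ where $f$ restricts nontrivially and $\Br(\kappa(x))=0$, then invoke the Fracking Lemma to produce a subsheaf whose trace-zero Hom-space is strictly smaller, and iterate using finite-dimensionality. Your argument fills in somewhat more detail than the paper on why the residue field is separable and why its Brauer group vanishes, but otherwise it matches the paper's proof.
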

\begin{proof}
Let $f:M\otimes F\rightarrow\omega_X\otimes F$ be a nonzero element
of the $k$-vector space of trace zero homomorphisms $\Hom(M\otimes
F,\omega_X\otimes F)_0$.  Choose a regular closed point $x$ of $X$
with separable residue field such that $f$ is nonzero at $x$. This is
possible since $M$ is invertible on $X'$ and $F$ is locally free on a
dense open of $X'$. Since $M\otimes F$ and $\omega_X\otimes F$ are
isomorphic at $x$, we can apply Lemma~\ref{lem:frak} at $x$ (using the
fact that $\kappa(x)$ has trivial Brauer group and the fact that the fibers of
$\omega_X$ and
$M$ are one-dimensional) to obtain a subsheaf $G\subseteq F$ such that
\begin{enumerate}
\item $G$ is a perfect sheaf with reflexive hull $F$,
\item   the map $$\Hom(M\otimes G,\omega_X\otimes
  G)_0\rightarrow\Hom(M\otimes F,\omega_X\otimes F)_0$$
  induced by passing to reflexive hulls are injective, but
\item  $f$ is not in the image.
\end{enumerate}
Since (2) and (3) imply that that the dimension of $\Hom(M\otimes
G,\omega_X\otimes G)_0$ is strictly smaller than that of
$\Hom(M_i\otimes F,\omega_X\otimes F)$, we can, possibly after
repeating the process finitely many times, find $G$ such that
$$\Ext_{\Xscr}^2(G,M\tensor G)_0^\vee=0,$$
as desired.
\end{proof}

\section{Proofs of the main results}\label{sec:proof}
% \section{Proofs of Theorem~\ref{thm:main} and Theorem~\ref{thm:1}}\label{sec:proof}

In this section we provide the proofs of Theorems~\ref{thm:main}
and~\ref{thm:1}.  As a standard reduction, we may, first of all,
assume that $\per(\alpha)=\ell$ is a prime distinct from the residue
characteristics of $X$, see
\cite{artin:Brauer-Severi}*{Proof~of~Theorem~6.2}. Since the index is
preserved under taking prime-to-$\ell$ extensions, we may adjoin a
primitive $\ell$th root, if necessary, so that we can apply the
results of Section~\ref{sec:ramification} to classes in the Brauer
group.  By Proposition \ref{prop:diagram} and the results of
Section~\ref{sec:some-pirutka-matr} (specifically
Example~\ref{3x3_clever} for $\ell >3$ and
Example~\ref{4x3_cheeseburger} for $\ell \mid 6$), the proofs of
Theorem~\ref{thm:main} and Theorem~\ref{thm:1} both reduce to proving
Theorem~\ref{thm:1} under the additional hypothesis that $X\to\Spec R$
is quasi-semistable. Let $\Xscr\rightarrow X$ be the $\m_\ell$-gerbe
associated to $\alpha$.

Let $\pi$ be a uniformizing parameter of $R$, so that $(\pi)$ denotes
the sheaf of ideals in $\Oscr_X$ that cuts out the special fiber
$X_0$, and let $I\supseteq(\pi)$ be the sheaf of ideals in $\Oscr_X$
that cuts out the \textit{reduced\/} special fiber $X_{0,\red}\subseteq
X_0$. Write $\Xscr_{0,\red}\to X_{0,\red}$ for the restriction $\Xscr\times_X
X_{0,\red}$. By Proposition~\ref{prop:snc}, there exists an $\Xscr_{0,\red}$-twisted
sheaf $F$ of rank $\ell$ with trivial determinant.  To finish
the proof, it suffices to find a perfect twisted subsheaf $G\subset F$
such that $\operatorname{rank}(G)=\operatorname{rank}(F)$ such that
$G$ deforms to an $\Xscr$-twisted sheaf over the formal scheme
$\widehat X$. Indeed, by the Grothendieck Existence
Theorem~\cite{ega3_1}*{Th\'eor\`eme~5.1.4}, any such formal
deformation algebraizes to yield an $\Xscr_{\widehat{R}}$-twisted sheaf of rank
$\ell$ on $X_{\widehat{R}}$, the pullback of $X\rightarrow\Spec R$ to the
completion $\widehat{R}$ of $R$. By Artin approximation, there is thus
a coherent $\Xscr$-twisted sheaf $V$ of rank $\ell$. By
\cite{lieblich}*{Proposition 3.1.2.1}, we have that
$\ind(\alpha_{\kappa(X)})$ divides $\ell$, as desired.

The rest of this section is devoted to producing the desired formal
deformation. We will do this by analyzing the formal local structure
of $X$ near $X_{0,\red}$ and then applying Lemma~\ref{lem:frak} to eliminate
obstructions to deforming across infinitesimal neighborhoods of $X_{0,\red}$.

Given two sheaves of ideals $I_1$ and $I_2$ on a scheme $Y$, define
$$I_1\starplus I_2=(I_1I_2:I_1\cap I_2).$$
If $f\in\Gamma(Y,\Oscr_Y)$ is an everywhere regular section, then we
have $(f I_1:I_2)=f(I_1:I_2)$ and $(f I_1:f I_2)=(I_1:I_2)$, so that $fI_1\starplus f I_2=f(I_2\starplus I_2)$. If $Y$ is the spectrum of a UFD, then for
any two sections $f, g\in\Gamma(Y,\Oscr_Y)$, we have
\begin{equation}\label{eq:fg}
  (f)\starplus(g)=(\gcd(f,g))
\end{equation}
Since this can be checked locally, it also follows that~\eqref{eq:fg}
holds in any locally factorial scheme.

Since $I/(\pi)$ is nilpotent, there is a least $m$ such that $I^m\subseteq(\pi)$.
Given $1\leq a\leq m$ and $b\geq 0$, let
$J_{a,b}=I^a(\pi^b)\starplus(\pi^{b+1})$.
The ideals $J_{a,b}$ have the following properties.
\begin{enumerate}
    \item   $J_{a+1,b}\subseteq J_{a,b}$ for $1\leq a\leq m-1$ and
        $J_{1,b+1}\subseteq J_{m,b}$.
    \item $J_{m,b}=(\pi^{b+1})$.
        Indeed, $I^m(\pi^b)\subseteq(\pi^{b+1})$, so that $(\pi^{b+1})\subseteq
        I^m(\pi^b)\starplus(\pi^{b+1})$. Since $X$ is regular, $I$ is locally
        principal, so that the inclusion $(\pi^{b+1})\subseteq J_{m,b}$ is
        locally an equality and hence an equality.
    \item $J_{a,b}/J_{a+1,b}\iso J_{a,0}/J_{a+1,0}$ for $1\leq a\leq (m-1)$
      and $J_{m,b}/J_{1,b+1}\iso J_{m,0}/J_{1,1}\iso\Oscr_X/I$. This also follows
      from the fact that $\pi$ is a regular section of $\Oscr_X$.
\end{enumerate}
Consider the filtration
    \begin{align*}
        I=J_{1,0} &\supset J_{2,0} \supset
        \ldots\supset J_{m-1,0}\supset (\pi)=J_{m,0}\supset \numberthis\label{eq:sequence}\\
      J_{1,1}&\supset J_{2,1}\supset
        \ldots\supset J_{m-1,1}\supset (\pi^2)=J_{m,1}\supset \\
        J_{1,2}&\supset J_{2,2}\supset
        \ldots\supset J_{m-1,2}\supset (\pi^3)=J_{m,2}\supset\\
        \ldots.
    \end{align*}
By the above calculations, there are only finitely many $\Oscr_{X}$-modules
appearing in the list of successive quotients in this filtration. By our choice of $m$, all of the successive quotients are nonzero. Moreover, multiplication by $I$ kills any of these $\Oscr_X$-modules, so we can
view them as $\Oscr_{X_0,\red}$-modules.

\begin{claim*}
Each successive quotient in the filtration defined
in~\eqref{eq:sequence} is locally free of rank $1$ on its support,
which consists of the union of a set of components of $X_{0,\red}$.
\end{claim*}

The claim is immediate for
$J_{m,b}/J_{1,b+1}\iso\Oscr_X/I=\Oscr_{X_0,\red}$.  For $1\leq a<m$, we
verify the claim \'etale locally, where we can appeal to the \'etale
local structure \eqref{eq:localform} of $X$. Thus, we may assume that
our regular scheme is $X=\Spec R[t_1,\ldots,t_n]/(t_1^{a_1}\cdots
t_r^{a_r}-\pi)$. In this case, $I=(t_1\cdots t_r)$ and
$(\pi)=(t_1^{a_1}\cdots t_r^{a_r})$. Using~\eqref{eq:fg}, we find
that $$J_{a,0}=(t_1^{\min(a,a_1)}\cdots t_r^{\min(a,a_r)}),$$ and
hence the quotient $J_{a,0}/J_{a+1,0}$ is isomorphic
to $$\Oscr_X/(t_1^{\epsilon(1,a)}\cdots t_r^{\epsilon(r,a)}),$$
where $$\epsilon(i,a)=\begin{cases}
0   &   \text{if $a_i\leq a$,}\\
1 & \text{if $a_i>a$.}
\end{cases}$$
Note that, by our choice of $m$, for any $1\leq a<m$, some
$\epsilon(i,a)$ is nonzero. It follows that the successive quotient
is (\'etale locally) isomorphic to the structure sheaf of some
collection of components of the reduced special fiber, proving the
claim.

For notational simplicity, set $M_0=\Oscr_{X_0,\red}$ and $M_i=J_{i,0}/J_{i+1,0}$ for $1\leq i<m$.
We claim that there is a perfect $\Xscr_{0,\red}$-twisted subsheaf $G\subseteq F$ such that
$F/G$ is supported in dimension $0$ and
$$
\Ext_{\Xscr_{0,\red}}^2(G,M_i\otimes G)_0=0
$$ for $0\leq i<m$.
If this is so then the obstruction of
Proposition~\ref{prop:deformations} to deforming any such $G$ (with
trivial determinant) through
the filtration \eqref{eq:sequence} vanishes, giving the desired formal
deformation. But this follows from Proposition
\ref{prop:frack-attack} applied in sequence to $M_1,\ldots,M_{m-1}$.

\begin{remark}
In the proof, we may have to take a torsion free subsheaf of $F$ in
order to remove obstructions to deforming off of $X_{0,\red}$. (We need $G$
to be perfect so that taking its determinant makes sense, and we work
with the equideterminantal deformations in order to kill obstruction
spaces using Proposition \ref{prop:frack-attack}.) In that case,
the resulting $\Xscr$-twisted sheaf may not be locally free. Moreover, a
reflexive sheaf on a regular threefold need not be locally free,
though it will have torsion free fibers over $R$.  Algebraically
speaking, this process may yield a maximal order in the division
algebra corresponding to $\alpha$ that is not locally free (see~\cite{ant-will}).
\end{remark}

\bibliographystyle{plain}
\bibliography{squarepaper}{}

\end{document}